\sloppy\pagestyle{plain}
\newtheorem{thm}{Theorem}[section]
\newtheorem{rmk}[thm]{Remark}
\newtheorem{defi}[thm]{Definition}
\newtheorem{prop}[thm]{Proposition}
\newtheorem{lem}[thm]{Lemma}
\newtheorem{claim}[thm]{Claim}
\newcommand{\beq}{\begin{equation}}
\newcommand{\eeq}{\end{equation}}
\newcommand{\Ric}{\operatorname{Ric}}
\newcommand{\ddbar}{\sqrt{-1} \partial \bar{\partial}}
\newcommand{\omegamod}{\omega_{\beta, \operatorname{mod}}}
\newcommand{\K}{K\"{a}hler}    
\newcommand{\KE}{K\"{a}hler--Einstein}
\title{Small angle limits of negatively curved K\"{a}hler--Einstein metrics with crossing edge singularities}
\begin{document}

\author[Yuxiang Ji]{Yuxiang Ji}
\address{Department of Mathematics, University of Maryland, College Park, MD, 20740}
\email{yxji@umd.edu}
\thanks{Research suppoted in part by NSF grant DMS-1906370.}
\maketitle

\begin{abstract}
Let $(X, D)$ be a log smooth log canonical pair such that $K_X+D$ is ample. Extending a theorem of Guenancia and building on his techniques, we show that negatively curved K\"{a}hler--Einstein crossing edge metrics converge to K\"{a}hler--Einstein mixed cusp and edge metrics smoothly away from the divisor when some of the cone angles converge to $0$. We further show that near the divisor such normalized K\"{a}hler--Einstein crossing edge metrics converge to a mixed cylinder and edge metric in the pointed Gromov--Hausdorff sense when some of the cone angles converge to $0$ at (possibly) different speeds.
\end{abstract}
\section{Introduction}
\subsection{The small angle world}
Let $X$ be a compact K\"{a}hler manifold of dimension $n$ and $D\subset X$ be a smooth hypersurface. A K\"{a}hler edge metric on $X$ with angle $2\pi \beta$ ($0<\beta\leq1$) along $D$ is a K\"{a}hler metric on $X\setminus D$ that is quasi-isometric to the model edge metric at $D$:
\[
\omega_{\operatorname{cone}}=\frac{\beta^2 \sqrt{-1} dz_1\wedge d\bar{z}_1}{|z_1|^{2(1-\beta)}}+\sum_{i=2}^n \sqrt{-1}dz_i\wedge d\bar{z}_i,
\]
where $z_1,\dots, z_n$ are holomorphic coordinates and $D$ is locally given by $\{z_1=0\}$. Tian generalized Calabi's conjecture to K\"{a}hler--Einstein edge metrics and studied the applications of negatively curved K\"{a}hler--Einstein edge metric to algebraic geometry by letting the cone angle tend to $2\pi$ \cite{Tian96}. Donaldson proposed using K\"{a}hler edge metrics to study the existence problem of smooth K\"{a}hler--Einstein metrics of positive curvature on $X$ by deforming the cone angle to $2\pi$ \cite{donaldson2012kahler}. Since then much research has gone into understanding the large angle limits (when $\beta\to 1$) of K\"{a}hler (--Einstein) edge metrics in relation to the Yau--Tian--Donaldson conjecture. Cheltsov--Rubinstein initiated the program of studying \KE\;edge metrics in another extreme where the cone angle goes to zero \cite{Cheltsov-Rubinstein}. One topic of their program is to understand the limit, when such exists, of \KE\;edge metrics as the cone angle tends to $0$. This paper is following that program. We prove that on a log smooth log canonical pair $(X, D)$, i.e., $X$ is a compact K\"{a}hler manifold and $D=\sum_{i=1}^r (1-\beta_i) D_i$ is a divisor with simple normal crossing support such that $\beta_i\in [0, 1)$ for all $i$, assuming that $K_X+\sum_{i=1}^r D_i$ is ample, then the negatively curved K\"{a}hler--Einstein crossing edge metrics converge to the K\"{a}hler--Einstein mixed cusp and edge metric when some of the cone angles tend to $0$. We further study the asymptotic behavior of the K\"{a}hler--Einstein crossing edge metrics near the divisor and show the rescaled K\"{a}hler--Einstein crossing edge metrics converge to mixed cylinder and edge metrics on $(\mathbb{C}^*)^m\times \mathbb{C}^{n-m}$ when some of the cone angles tend to $0$. A beautiful theorem of Guenancia related the K\"{a}hler--Einstein edge metric to the K\"{a}hler--Einstein cusp metric in the smooth case \cite{guenancia2020kahler}, which confirmed a conjecture made by Mazzeo \cite{mazzeo1999kahler}. Our paper is a generalization of Guenancia's results to the snc case. An added interesting feature of our work is the possibility that multiple angles converge to zero at (possibly) different rates.
\subsection{Guenancia's convergence result}
Let $\mathbb{D}^*$ be the punctured unit disc in $\mathbb{C}$. The first observation is that
\begin{align*}
    \omega_{\eta, \mathbb{D}^*}:=\frac{\eta^2 \sqrt{-1} dz\wedge d\bar{z}}{|z|^{2(1-\eta)}(1-|z|^{2\eta})^2},\quad z\in \mathbb{D}^*, \eta\in(0, 1),
\end{align*}
is a K\"{a}hler edge metric with cone angle $2\pi \eta$ at $0$ and it has constant Ricci curvature $-2$. When $\eta$ tends to $0$, $\omega_{\eta, \mathbb{D}^*}$ converges pointwise (see \eqref{eq: convergence of edge to cusp in 1d} for the detail) to the following cusp metric (also called a Poincar\'{e} metric) on $\mathbb{D}^*$:
\begin{align}
    \omega_{P, \mathbb{D}^*}:=\frac{\sqrt{-1} dz\wedge d\bar{z}}{|z|^2 \left(\log |z|^2\right)^2}.\label{eq: model cusp metric}
\end{align}
In higher dimensions, we consider the pair $(X, D)$ where $X$ is a compact K\"{a}hler manifold of dimension $n$ and $D$ is a smooth divisor such that $K_X+D$ is ample. By Kobayashi \cite[Theorem 1]{kobayashi1984kahler} or Tian--Yau \cite[Theorem 2.1]{TianYau} with complements by Wu \cite{wu2006higher}, there exists a unique complete K\"{a}hler--Einstein metric $\omega_0$ on $X\setminus D$ with cusp singularity along $D$ such that $\operatorname{Ric} \omega_0=-\omega_0$.
\begin{defi}\label{def: cusp metric}
$\omega_0$ is said to have cusp singularities along $D$ if whenever $D$ is locally given by $\{z_1=0\}$, there exists a constant $C>0$ such that 
\begin{equation*}
    C^{-1}\omega_{\operatorname{cusp}}\leq \omega_0\leq C\omega_{\operatorname{cusp}},
\end{equation*}
where
$\omega_{\operatorname{cusp}}$ is the model cusp metric:
\begin{align*}
    \omega_{\operatorname{cusp}}:=\frac{\sqrt{-1}dz_1\wedge d\bar{z}_1}{|z_1|^2\log^2 |z_1|^2}+\sum_{i=2}^n \sqrt{-1}dz_i\wedge d\bar{z}_i.
\end{align*}
\end{defi}
Since ampleness is an open condition, there exists some $\beta_0$ such that for $0<\beta<\beta_0$, $K_X+(1-\beta)D$ is also ample. Thus, by Campana--Guenancia--P\u aun \cite[Theorem A]{GP} and Jeffres--Mazzeo--Rubinstein \cite[Theorem 2]{JMR}, there exists a unique negatively curved K\"{a}hler--Einstein edge metric $\omega_\beta$ for each such small $\beta\in(0, \beta_0]$. The family of metrics $\{\omega_\beta\}_{0\leq \beta< \beta_0}$ can be seen as currents on $X$ satisfying the twisted \KE \;equation:
\begin{align*}
    \Ric \omega_\beta=-\omega_\beta+(1-\beta)[D],\quad 0\leq \beta<\beta_0.
\end{align*}
As a generalization of the observation discussed in the beginning of this section, Guenancia related these two metrics as follows:
\begin{thm}\textup{\cite[Theorem A and B]{guenancia2020kahler}}\label{G Thm A}
Let $\omega_0$ be defined as in Definition \ref{def: cusp metric}. $\{\omega_\beta\}_{0<\beta<\beta_0}$ converge to $\omega_0$ in both the weak topology of currents and the $C^\infty_{\operatorname{loc}}(X\setminus D)$-topology as $\beta\to 0$. Moreover, for $\beta\in(0, 1/2]$, there exists a constant $C>0$ independent of $\beta$ such that on any coordinate chart $U$ where $D$ is given by $\{z_1=0\}$, the \KE\;edge metric $\omega_\beta$ satisfies 
\begin{align}
    C^{-1}\omega_{\beta, \operatorname{mod}}\leq \omega_\beta\leq C \omega_{\beta, \operatorname{mod}},\label{G laplacian est}
\end{align}
where 
\begin{align*}
    \omega_{\beta, \operatorname{mod}}:=\frac{\beta^2\sqrt{-1}dz_1\wedge d\bar{z}_1}{|z_1|^{2(1-\beta)}(1-|z_1|^{2\beta})^2}+\sum_{i=2}^n \sqrt{-1} dz_i\wedge d\bar{z}_i.
\end{align*}
\end{thm}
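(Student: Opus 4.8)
The plan is to realize $\omega_\beta$ and $\omega_0$ as solutions of complex Monge--Amp\`ere equations and to extract $\beta$-uniform a priori estimates by the maximum principle. The class $[\omega_\beta]=c_1(K_X+(1-\beta)D)$ converges as $\beta\to0$ to $c_1(K_X+D)=[\omega_0]$, so I fix smooth reference forms $\theta_\beta\in c_1(K_X+(1-\beta)D)$ depending smoothly on $\beta$, a smooth volume form $\Omega$, and a smooth Hermitian metric $h$ on $\mathcal{O}_X(D)$ with defining section $s$. Writing $\omega_\beta=\theta_\beta+\ddbar\varphi_\beta$, the edge equation $\Ric\omega_\beta=-\omega_\beta+(1-\beta)[D]$ becomes
\[
(\theta_\beta+\ddbar\varphi_\beta)^n=\frac{e^{\varphi_\beta+F_\beta}\,\Omega}{|s|_h^{2(1-\beta)}},
\]
where $F_\beta$ is smooth and converges smoothly as $\beta\to0$, and $\varphi_\beta$ is bounded and lies in the edge H\"older spaces of \cite{JMR}; the cusp equation $\Ric\omega_0=-\omega_0$ is the $\beta\to0$ limit, with the weight $|s|_h^{2(1-\beta)}$ degenerating to $|s|_h^{2}(\log|s|_h^{2})^{2}$ after adding a reference potential producing the cusp singularity. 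Since, under the natural rescaling, $\beta^{-2}(1-|z_1|^{2\beta})^2\to\tfrac14(\log|z_1|^{2})^{2}$, the edge equations degenerate continuously to the cusp equation; the whole task is to make this limit quantitative and uniform in $\beta$.

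First I would establish a $C^0$ estimate uniform in $\beta$. Comparing $\varphi_\beta$ against explicit barriers built from $|s|_h^{2\beta}$ and $\log(-\log|s|_h^{2})$ and invoking the comparison principle for the Monge--Amp\`ere operator, one bounds $\varphi_\beta$ on compact subsets of $X\setminus D$ independently of $\beta$ while capturing the correct logarithmic blow-up rate as one approaches $D$. This also pins down the density of $\omega_\beta^n$ up to $\beta$-uniform constants.

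The heart of the argument is the uniform Laplacian estimate \eqref{G laplacian est}. Here I would run a Chern--Lu / Yau second-order computation comparing $\omega_\beta$ with the interpolating model metric $\omegamod$ rather than with the fixed background. The decisive structural fact is that $\omegamod$ is, in the normal direction, modeled on the one-dimensional metric $\omega_{\eta,\mathbb{D}^*}$ of constant Ricci curvature $-2$ and is flat in the tangential directions, so its holomorphic bisectional curvature is bounded below by a constant \emph{independent of} $\beta$. Applying the maximum principle to a quantity of the form $\log\operatorname{tr}_{\omegamod}\omega_\beta-A\varphi_\beta$ on the noncompact manifold $X\setminus D$ --- with a cutoff to tame the cusp end and a barrier to control the behavior at $D$ --- should yield $\operatorname{tr}_{\omegamod}\omega_\beta\le C$ with $C$ independent of $\beta$; the reverse bound $\operatorname{tr}_{\omega_\beta}\omegamod\le C$ follows from the same scheme together with the $C^0$-controlled density, and the two combine to give $C^{-1}\omegamod\le\omega_\beta\le C\,\omegamod$.

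Convergence then follows by compactness. On compact subsets of $X\setminus D$ the uniform $C^0$ and Laplacian bounds feed into local Evans--Krylov and Schauder estimates, giving $\beta$-uniform $C^{k}_{\operatorname{loc}}$ bounds; a diagonal argument extracts a subsequential $C^\infty_{\operatorname{loc}}(X\setminus D)$ limit, which solves the cusp Monge--Amp\`ere equation and hence, by uniqueness of the complete negatively curved \KE\ metric, equals $\omega_0$. Uniqueness of the limit promotes subsequential convergence to full convergence, and the uniform potential bounds yield weak convergence of currents on $X$. The step I expect to be the main obstacle is the Laplacian estimate: one must verify that the lower curvature bound for $\omegamod$ and every boundary and barrier term remain genuinely uniform both as $\beta\to0$ and as one approaches $D$ --- precisely where the cone-to-cusp degeneration is concentrated, and where the particular choice of the interpolating model $\omegamod$ is what keeps the geometry under control.
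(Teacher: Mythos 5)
Your outline follows the same strategy as Guenancia's proof of this theorem (the scheme this paper summarizes in \S\ref{main ingre} and reuses in Section \ref{sec: small angle limits of KEE}): recast everything as complex Monge--Amp\`ere equations, prove a $\beta$-uniform $C^0$ bound by the maximum principle with the barrier $\epsilon\log|s|_h^2$, prove the uniform Laplacian estimate from a $\beta$-uniform curvature bound on an interpolating model metric, and conclude by Evans--Krylov, bootstrapping, and uniqueness of the cusp \KE\;metric. However, the step you yourself flag as the main obstacle contains a genuine gap. You justify the curvature bound by observing that $\omegamod$ is a product of the constant-curvature metric $\omega_{\beta,\mathbb{D}^*}$ with a flat factor. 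That is true of the \emph{local} product model on a coordinate chart, but the maximum principle must be run globally on $X\setminus D$: the barrier handles the noncompactness at $D$, yet nothing prevents the maximum of your test function from sitting on the boundary of the chart where the product model is defined. One therefore has to work with a globally defined reference metric --- Guenancia's $\Omega_\beta=\omega-\ddbar\log\left[(1-|s|_h^{2\beta})/\beta\right]^2$ as in \eqref{defi of Omegabeta} --- which is only quasi-isometric to the product model and is \emph{not} a product, and the uniform bound on its bisectional curvature is a real computation (\cite[Theorem 3.2]{guenancia2020kahler}, reproved as Lemma \ref{erchong} here): after the change of variable $\xi=z_1^\beta/\beta$ the metric coefficients and their derivatives are of size $O(|\xi|^{2/\beta-2})$, $O(|\xi|^{2/\beta-3})$, $O(|\xi|^{2/\beta-4})$, and boundedness of the curvature tensor holds precisely because $\beta\le 1/2$. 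Your proposal never invokes the hypothesis $\beta\in(0,1/2]$ anywhere, which is the telltale sign that the decisive input has been assumed rather than proved.

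A second, related imprecision is the pairing of the trace quantity with the one-sided curvature bound. You propose to bound $\operatorname{tr}_{\omegamod}\omega_\beta$ using a \emph{lower} bisectional bound on the model; that is the Aubin--Yau second-order estimate, and it drags in $\Delta_{\omegamod}$ of the logarithm of the density --- i.e.\ Ricci/scalar-curvature terms of the reference and derivatives of $|s|_h^{2(1-\beta)}$ --- all of which must also be shown uniform in $\beta$. The Chern--Lu route used in \cite{guenancia2020kahler} and in Lemma \ref{compare omegail with omegabetai} of this paper goes the other way: for the identity map $(X\setminus D,\omega_\beta)\to(X\setminus D,\Omega_\beta)$ one bounds $\operatorname{tr}_{\omega_\beta}\Omega_\beta$, which needs only the Ricci lower bound of $\omega_\beta$ (free from the \KE\;equation) and an \emph{upper} bisectional bound of the target, so no derivative of the density is ever taken. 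This yields the lower bound $\Omega_\beta\le C\omega_\beta$ first; the upper bound then follows by pure linear algebra from the uniform equivalence of the volume forms (as in the proof of Theorem \ref{compare omegaphi and Omegabeta}), with no second run of the maximum principle. Your version can likely be repaired along these lines, but as written the uniform Laplacian estimate --- the heart of the theorem --- is not established.
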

 
The relation between the convergence result in Theorem \ref{G Thm A} and \eqref{G laplacian est} is that the weak convergence from $(\omega_{\beta})_{0<\beta<\beta_0}$ to $\omega_0$ can be recovered from \eqref{G laplacian est} by using Lebesgue's Dominated Convergence Theorem. 

As an application of Theorem \ref{G Thm A}, Guenancia studied the asymptotic behavior of $\omega_\beta$ near $D$ as $\beta\to 0$. Fix a point $p\in D$, let $U_\beta$ denote the punctured metric ball $B_{\omega_\beta}(p, 1)$ of radius $1$ centered at $p$ with respect to the metric $\omega_\beta$. Then after renormalization by $\beta^{-2}$, there exists a subsequence of the metric spaces $(U_\beta, \frac{1}{\beta^2}\omega_\beta)$ converging to $(\mathbb{C}^*\times \mathbb{C}^{n-1}, \omega_{\operatorname{cyl}})$ in the pointed Gromov--Hausdorff sense, where $\omega_{\operatorname{cyl}}$ is a so-called cylindrical metric: 
\begin{defi}
Let $\pi: \mathbb{C}^n\to \mathbb{C}^*\times \mathbb{C}^{n-1}$ be the universal cover of $\mathbb{C}^*\times \mathbb{C}^{n-1}$ given by $\pi(z_1\dots, z_n)=(e^{z_1},z_2,\dots, z_n)$. A K\"{a}hler metric $\omega_{\operatorname{cyl}}$ on $\mathbb{C}^*\times \mathbb{C}^{n-1}$ is called cylindrical if $\pi^*\omega$ is isometric to the usual Euclidean metric on $\mathbb{C}^n$ up to a complex linear transformation.
\end{defi}

\begin{thm}\textup{\cite[Theorem C]{guenancia2020kahler}}\label{G Thm C}
Let $(\beta_k)_{k\in \mathbb{N}}$ be a sequence of positive numbers converging to $0$. Then, up to extracting a subsequence, there exists a cylindrical metric $\omega_{\operatorname{cyl}}$ on $\mathbb{C}^*\times \mathbb{C}^{n-1}$ such that the metric spaces $(U_{\beta_k}, \beta_k^{-2}\omega_{\beta_k})$ converge in pointed Gromov-Hausdorff topology to $(\mathbb{C}^*\times \mathbb{C}^{n-1}, \omega_{\operatorname{cyl}})$ when $k$ tends to $+\infty$.
\end{thm}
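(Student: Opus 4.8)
The plan is to realize the rescaled metrics $\tilde\omega_\beta:=\beta^{-2}\omega_\beta$ as a family with uniformly bounded geometry in suitable ``unwrapping'' coordinates, to extract a smooth limit via a Cheeger--Gromov--Hamilton compactness theorem, and then to identify that limit as a flat cylinder. The organizing principle is the scaling of curvature: since the Ricci form is invariant under constant rescaling, $\Ric\tilde\omega_\beta=\Ric\omega_\beta=-\omega_\beta=-\beta^2\tilde\omega_\beta$, so the $\tilde\omega_\beta$ are asymptotically Ricci--flat as $\beta\to0$; moreover $|\mathrm{Rm}(\tilde\omega_\beta)|_{\tilde\omega_\beta}=\beta^2|\mathrm{Rm}(\omega_\beta)|_{\omega_\beta}$, so a uniform bound on the curvature of $\omega_\beta$ near $D$ will force the rescaled curvature to $0$ and make the limit flat.

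First I would fix coordinates $(z_1,\dots,z_n)$ with $D=\{z_1=0\}$, choose the basepoint $q_\beta$ at a fixed $\omega_\beta$--distance (say $1/2$) from $p$, which in $\tilde\omega_\beta$ sits at distance of order $1/\beta$ from the tip, and then pass to the universal cover of the punctured disc via $z_1=e^{\zeta_1}$ while rescaling the transverse directions by $z_i=\beta w_i$ for $i\ge2$. A direct computation, analogous to the one--dimensional limit $\omega_{\eta,\mathbb{D}^*}\to\omega_{P,\mathbb{D}^*}$ recalled above, shows that in these coordinates the rescaled model metric $\beta^{-2}\omegamod$ converges on every fixed ball about $q_\beta$ to a flat metric that is invariant under the deck transformation $\zeta_1\mapsto\zeta_1+2\pi i$ and whose generating circle has a bounded, nonzero circumference $2\pi\lambda$ determined by the chosen distance, while the rescaled transverse slices fill out a flat $\mathbb{C}^{n-1}$. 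The point of choosing $q_\beta$ at fixed $\omega_\beta$--distance is precisely that it lands in the region where circles have circumference bounded away from $0$; nearer the tip the circles collapse, but for each fixed radius $R$ the ball $B_{\tilde\omega_\beta}(q_\beta,R)$ remains inside the non--collapsed region as $\beta\to0$.

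Next I would upgrade the two--sided bound \eqref{G laplacian est} of Theorem \ref{G Thm A} to uniform higher--order control. Estimate \eqref{G laplacian est} already shows that, in the unwrapping coordinates above, $\tilde\omega_\beta$ is uniformly quasi--isometric to the Euclidean metric on balls of a fixed size, with constants independent of $\beta$. Writing the \KE\;equation $\Ric\omega_\beta=-\omega_\beta$ locally as a complex Monge--Amp\`ere equation whose ellipticity, coefficients, and right--hand side are uniformly controlled in these coordinates, I would invoke the Evans--Krylov estimate together with Schauder theory to obtain uniform interior $C^{k,\al}$ bounds on the local potential, hence uniform bounds $|\nabla^k\mathrm{Rm}(\omega_\beta)|_{\omega_\beta}\le C_k$ near $D$ independent of $\beta$. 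By the curvature scaling this gives $|\nabla^k\mathrm{Rm}(\tilde\omega_\beta)|_{\tilde\omega_\beta}\le C_k\beta^2\to0$, together with a uniform lower bound for the injectivity radius at $q_\beta$ coming from the non--collapsing of the previous step.

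Finally, with uniformly bounded geometry and a definite injectivity radius at the basepoints $q_{\beta_k}$, the Cheeger--Gromov--Hamilton compactness theorem produces, after passing to a subsequence, a complete pointed limit to which $(U_{\beta_k},\tilde\omega_{\beta_k},q_{\beta_k})$ converges in the $C^\infty_{\operatorname{loc}}$ (Cheeger--Gromov) sense, and a fortiori in the pointed Gromov--Hausdorff sense. Since $|\mathrm{Rm}(\tilde\omega_{\beta_k})|\to0$, the limit metric is flat; its underlying space is $\mathbb{C}^*\times\mathbb{C}^{n-1}$ (the $z_1$--annulus unwraps to the cylinder $\mathbb{C}^*$ while the rescaled transverse slices fill out $\mathbb{C}^{n-1}$), and the explicit $C^\infty_{\operatorname{loc}}$ convergence of the model computed in the second step identifies the limit with a cylindrical metric $\omega_{\operatorname{cyl}}$ in the sense of the definition, the residual linear transformation (and hence the subsequence) accounting for possible off--diagonal flat data. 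I expect the main obstacle to be exactly the uniform higher--order step: promoting the $C^0$ quasi--isometry \eqref{G laplacian est} to curvature and injectivity--radius bounds that do \emph{not} degenerate as $\beta\to0$, which forces one to run the Monge--Amp\`ere estimates in the $\beta$--dependent unwrapping coordinates while keeping every constant uniform.
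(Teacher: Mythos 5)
There is a genuine gap, and it sits exactly where you predicted the main obstacle would be: the passage from the quasi-isometry \eqref{G laplacian est} to curvature control, and hence the identification of the limit. In the unwrapping coordinates you propose, the object whose Monge--Amp\`ere structure is uniformly elliptic with uniformly controlled data is the \emph{rescaled} metric $\tilde\omega_\beta=\beta^{-2}\omega_\beta$ (it is the one uniformly comparable to the Euclidean metric on balls of fixed size). Evans--Krylov and Schauder therefore give $|\nabla^k\mathrm{Rm}(\tilde\omega_\beta)|_{\tilde\omega_\beta}\le C_k$, which by the scaling identity is $|\nabla^k\mathrm{Rm}(\omega_\beta)|_{\omega_\beta}\le C_k\beta^{-(2+k)}$ --- \emph{not} the uniform unrescaled bound $|\nabla^k\mathrm{Rm}(\omega_\beta)|_{\omega_\beta}\le C_k$ you assert. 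Multiplying the correct estimate by $\beta^{2}$ merely recovers boundedness of the rescaled curvature, so the step ``$|\mathrm{Rm}(\tilde\omega_\beta)|\le C_k\beta^2\to 0$, hence the limit is flat'' is circular. What compactness actually yields is a limit that is Ricci-flat (from $\Ric\tilde\omega_\beta=-\beta^2\tilde\omega_\beta$) and uniformly quasi-isometric to the flat cylinder; Ricci-flat does not imply flat in complex dimension $\geq 2$. Your fallback --- identifying the limit with the limit of the rescaled model metrics --- also fails, because \eqref{G laplacian est} only pins $\omega_\beta$ to the model up to a fixed multiplicative constant $C$: quasi-isometry survives the limit, equality does not. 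Since the statement asserts convergence to a cylindrical metric in the strict sense of Definition \ref{def of mix}'s smooth-case counterpart (pullback to the universal cover isometric to the Euclidean metric up to a complex linear map), the missing ingredient is a Liouville-type step: on the universal cover $\mathbb{C}^n$ the limit has a global potential $u$ with $C^{-1}\delta\le(u_{i\bar j})\le C\delta$; Ricci-flatness makes $\log\det(u_{i\bar j})$ pluriharmonic and bounded, hence constant, and then a Liouville theorem for the complex Monge--Amp\`ere equation (entire solutions with two-sided Hessian bounds are quadratic polynomials) forces the limit to be flat, i.e.\ cylindrical. This is how Guenancia closes the argument, and without it your proof only reaches the weaker, quasi-isometric conclusion.

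Beyond this, your route differs from the one taken in Section \ref{sec: asm behavior near D} of this paper in two structural ways, each with a cost. First, you unwrap through the universal cover $z_1=e^{\zeta_1}$, so your compactness argument lives upstairs; to descend pointed convergence to $\mathbb{C}^*\times\mathbb{C}^{n-1}$ you must run an \emph{equivariant} Cheeger--Gromov/Gromov--Hausdorff argument tracking the deck transformations, a layer you acknowledge but do not supply. The paper instead uses the anisotropic dilation $\Psi_\beta$ (Lemma \ref{convergence of the pull-back metric}), which never changes the underlying space, so ordinary local $C^\infty$ convergence immediately gives pointed Gromov--Hausdorff convergence (Step 3 of Theorem \ref{thm: convergence of kee near divisor}). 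Second, where you invoke Cheeger--Gromov--Hamilton compactness (which needs the very injectivity-radius and curvature bounds in question), the paper works at the level of potentials: uniform $C^0$ bounds from \eqref{G laplacian est}, pluriharmonicity and uniform smooth bounds of $H_\beta=\log\bigl(\bar\omega_{\phi_\beta}^n e^{-\beta_1^2\bar\phi_\beta}/dV_{\operatorname{eucl}}\bigr)$, then Evans--Krylov, bootstrapping, and Arzel\`a--Ascoli. That potential-theoretic route is both more elementary and exactly what makes all constants uniform in $\beta$; your geometric-compactness route can be made to work, but only after the Liouville step above replaces the circular curvature argument.
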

\subsection{The main results}
A natural problem is to generalize Theorems \ref{G Thm A} and \ref{G Thm C} to the snc case when all or some of the cone angles tend to $0$. This possibility is mentioned in \cite{guenancia2020kahler} but there is no detailed proof given. In this paper, we generalize Theorems \ref{G Thm A} and \ref{G Thm C} to the snc setting.

From now on, let $(X, \omega)$ be an $n$-dimensional K\"{a}hler manifold with a smooth K\"{a}hler metric $\omega$. Fix a divisor $\displaystyle D_\beta:=\sum_{i=1}^r (1-\beta_i)D_i$, where $\beta_i\in(0, 1)$ for $i=1,\dots, r$. Assume each $D_i$ is smooth and irreducible. We further assume $D_\beta$ is a simple normal crossing divisor, i.e., for any $p\in \operatorname{supp}(D_\beta)$ lying in the intersection of exactly $m$ components $D_1, \dots, D_m$, there exists a coordinate chart $(U, \{z_i\}_{i=1}^n)$ containing $p$ such that $D_j|_{U}=\{z_j=0\}$ for $j=1,\dots, m$, $m\leq n$. Suppose $\displaystyle K_X+\sum_{i=1}^r D_i$ is ample. Let $s_i$ denote the defining section of $D_i$ and $h_i=|\cdot|_{h_i}$ be a smooth hermitian metric on $L_{D_i}$, which is the line bundle induced by $D_i$. We normalize $h_i$ such that $\log |s_i|_{h_i}^2+1<0$ for each $i$. Denote \[\beta:=(\beta_1,\dots, \beta_r) \in (0, 1)^r.
\]

The following result is well known (see \cite[\S 4]{rubinstein2014smooth} for a survey) .
\begin{thm}\label{existence of KEE} \textup{\cite{GP, JMR, Mazzeo--Rubinstein}}
{\rm (Solution of the Calabi--Tian conjecture in the negative regime)} There exists a unique \KE\;crossing edge metric with negative curvature, denoted by $\omega_{\phi_\beta}=\omega+\ddbar \phi_\beta$ on $X$ with cone angle $2\pi \beta_i$ along $D_i$ for each $i$. In another word, $\omega_{\phi_\beta}$ satisfies the \KE\;edge equation
\[
\Ric \omega_{\phi_\beta}-[D_\beta]=-\omega_{\phi_\beta}. 
\]
\end{thm}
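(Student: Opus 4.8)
The plan is to recast the \KE\;crossing edge equation as a complex Monge--Amp\`ere equation with a prescribed, mildly singular right-hand side, to solve it by smooth approximation, and then to promote the solution to a genuine crossing edge metric through a priori estimates that are uniform in the approximation parameter. Take $\omega$ to be a \K\;form in the \KE\;class $c_1(K_X+D_\beta)$ (a \K\;class by the positivity assumption) and write $\omega_\phi:=\omega+\ddbar\phi$ for the sought metric. Using the Poincar\'e--Lelong identity $[D_i]=\Theta_{h_i}+\ddbar\log|s_i|_{h_i}^2$ for the curvature $\Theta_{h_i}$ of $(L_{D_i},h_i)$, one absorbs the current $[D_\beta]$ into the potential, so that $\Ric\,\omega_\phi-[D_\beta]=-\omega_\phi$ becomes, on $X\setminus\operatorname{supp}(D_\beta)$, the scalar equation
\[
\omega_\phi^n=\frac{e^{\phi+F}}{\prod_{i=1}^r|s_i|_{h_i}^{2(1-\beta_i)}}\,\omega^n,
\]
for a suitable $F\in C^\infty(X)$ built from $\Ric\,\omega$ and the $\Theta_{h_i}$. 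The exponents $1-\beta_i\in(0,1)$ produce precisely the edge singularities, and the decisive feature is the sign $+\phi$ on the right, characteristic of the negative-curvature regime, which is what makes the equation unconditionally solvable and the solution unique.

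To produce a solution I would first regularize the density. Replacing $|s_i|_{h_i}^2$ by $|s_i|_{h_i}^2+\varepsilon$ gives the family of equations
\[
(\omega+\ddbar\phi_\varepsilon)^n=e^{\phi_\varepsilon+F}\prod_{i=1}^r\bigl(|s_i|_{h_i}^2+\varepsilon\bigr)^{-(1-\beta_i)}\omega^n,
\]
whose right-hand sides are smooth and strictly positive. Each is the classical Aubin--Yau equation in the negative regime: the continuity method yields a unique smooth $\phi_\varepsilon$, with the $C^0$ bound at fixed $\varepsilon$ coming from the maximum principle (at an interior maximum $\ddbar\phi_\varepsilon\le0$ forces the exponential factor to be $\le1$, and symmetrically from below) and the higher-order bounds from Yau's second-order estimate together with Evans--Krylov theory. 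The task is then to pass to the limit $\varepsilon\to0$.

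The heart of the matter, and the principal obstacle, is a set of a priori estimates on $\phi_\varepsilon$ uniform in $\varepsilon$ and sharp enough to recover a crossing edge metric. A uniform $C^0$ bound follows from integrability: since $1-\beta_i<1$, the weight $\prod_i|s_i|_{h_i}^{-2(1-\beta_i)}$ lies in $L^p(\omega^n)$ for some $p>1$, so Ko{\l}odziej-type $L^\infty$ estimates apply uniformly. For the Laplacian estimate I would compare $\omega+\ddbar\phi_\varepsilon$ near a point where $m$ of the $D_i$ meet with the crossing-edge model
\[
\omega_{\beta,\operatorname{mod}}:=\sum_{j=1}^m\frac{\beta_j^2\,\sqrt{-1}\,dz_j\wedge d\bar z_j}{|z_j|^{2(1-\beta_j)}(1-|z_j|^{2\beta_j})^2}+\sum_{j=m+1}^n\sqrt{-1}\,dz_j\wedge d\bar z_j,
\]
applying a Chern--Lu (Schwarz-lemma) inequality against this model — whose bisectional curvature is bounded — together with a barrier built from $\sum_i(-\log|s_i|_{h_i}^2)$, exactly as in Guenancia's proof of Theorem \ref{G Thm A} but now for a product of cones. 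This gives $C^{-1}\omega_{\beta,\operatorname{mod}}\le\omega+\ddbar\phi_\varepsilon\le C\,\omega_{\beta,\operatorname{mod}}$ with $C$ independent of $\varepsilon$; interior higher-order estimates on $X\setminus\operatorname{supp}(D_\beta)$ then follow from Evans--Krylov and the Schauder theory in the edge H\"older spaces of Jeffres--Mazzeo--Rubinstein. The genuinely \emph{new} difficulty compared with the smooth-divisor case of \cite{guenancia2020kahler} is the linear and nonlinear analysis at the crossing loci $D_{i_1}\cap\cdots\cap D_{i_m}$, where the model is a product of two-dimensional cones and the mapping properties of the Laplacian are subtler; this is precisely the analysis furnished by \cite{GP, JMR, Mazzeo--Rubinstein}.

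Letting $\varepsilon\to0$, the uniform estimates give $\phi_\varepsilon\to\phi$ in $C^\infty_{\operatorname{loc}}(X\setminus\operatorname{supp}(D_\beta))$, and the limit $\phi$ solves the singular Monge--Amp\`ere equation with $\omega+\ddbar\phi$ quasi-isometric to $\omega_{\beta,\operatorname{mod}}$; hence $\omega_{\phi_\beta}:=\omega+\ddbar\phi$ is a \KE\;crossing edge metric with cone angle $2\pi\beta_i$ along $D_i$, solving $\Ric\,\omega_{\phi_\beta}-[D_\beta]=-\omega_{\phi_\beta}$. Uniqueness is comparatively soft: if $\phi$ and $\phi'$ are two bounded solutions, their difference $\psi=\phi-\phi'$ satisfies $(\omega+\ddbar\phi)^n=e^{\psi}(\omega+\ddbar\phi')^n$ in the Bedford--Taylor sense, and the comparison principle for bounded $\omega$-plurisubharmonic potentials — driven by the monotonicity coming from the $+\psi$ in the exponent — forces $\psi\equiv0$.
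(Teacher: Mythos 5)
A preliminary remark on the comparison: the paper does not prove Theorem \ref{existence of KEE} at all. It is quoted as a known result of \cite{GP, JMR, Mazzeo--Rubinstein} (with \cite{rubinstein2014smooth} as a survey), and all of the paper's own contributions (Theorems \ref{Thm one}, \ref{Thm three}, \ref{Thm two}, \ref{r2 case}) take this existence statement as input. So your proposal is necessarily a reconstruction of the literature's proof rather than of anything argued in the paper. With that understood, your skeleton is the strategy of \cite{GP}: rewrite the equation as a complex Monge--Amp\`ere equation with density $\prod_i|s_i|_{h_i}^{-2(1-\beta_i)}$, regularize to $\prod_i(|s_i|_{h_i}^2+\varepsilon)^{-(1-\beta_i)}$, solve each regularized equation by Aubin--Yau, obtain a uniform $C^0$ bound (maximum principle for the fixed-$\varepsilon$ bounds and for the uniform upper bound, Ko{\l}odziej for the uniform oscillation bound since the densities lie uniformly in $L^p$, $p>1$), and prove uniqueness by the comparison principle for bounded $\omega$-plurisubharmonic potentials. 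Those steps are sound.

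The genuine gap is the uniform second-order estimate. The Chern--Lu argument requires a \emph{globally defined} reference metric on $X\setminus\operatorname{supp}(D_\beta)$ whose holomorphic bisectional curvature is bounded above, together with a global auxiliary function $\log\operatorname{tr}_{\omega_\varepsilon}(\cdot)$ to which one can apply the maximum principle; the crossing model $\omega_{\beta,\operatorname{mod}}$ you compare against exists only in a coordinate chart, and a chart-by-chart comparison fails because the maximum of the trace quantity may sit on the boundary of the chart, arbitrarily close to $D_\beta$, where no a priori control is available. The natural global reference is $\Omega_\beta$ of \eqref{defi of Omegabeta}, and bounding \emph{its} bisectional curvature in the snc case is exactly the hard point: the local product model has bounded curvature, but the global metric built from the $|s_i|_{h_i}$ acquires cross terms, and the bound is known only under the restriction $\beta_i\in(0,\frac{1}{2}]$ \cite{LS} via precisely the ``complicated computations'' that this paper goes out of its way to avoid (see \S\ref{main ingre}, where the author instead adopts the Datar--Song approximation scheme in the proof of Proposition \ref{comp omegaphi and Omegabetai}). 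Note also that Theorem \ref{existence of KEE} is stated for all $\beta_i\in(0,1)$, so even granting the curvature bound your route cannot reach the full statement: for $\beta_i>\frac{1}{2}$ one needs the edge-calculus and continuity-method analysis of \cite{JMR, Mazzeo--Rubinstein} or later Guenancia--P\u{a}un-type arguments. Your closing sentence, that the analysis at the crossing loci ``is precisely the analysis furnished by \cite{GP, JMR, Mazzeo--Rubinstein},'' concedes this, but it renders the proposal circular: those are the very sources the theorem cites, so the one genuinely difficult step is black-boxed rather than proved. A non-circular completion in the spirit of this paper would be: solve the equations with edge singularity along a \emph{single} smooth component (existence known from the smooth-divisor case of \cite{JMR}), run Chern--Lu against the single-divisor references $\Omega_{\beta_i}$ of \eqref{Defi of Omegabetai} -- whose curvature bound is Lemma \ref{erchong} -- and pass to the limit using Ko{\l}odziej stability, as in Section \ref{sec: small angle limits of KEE}.
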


Analogously to \cite{guenancia2020kahler}, let us introduce a reference metric,
\begin{align}\label{defi of Omegabeta}
    \Omega_\beta:=\omega-\sum_{i=1}^r \sqrt{-1}\partial \bar{\partial} \log \left[\frac{1-|s_i|^{2\beta_i}_{h_i}}{\beta_i} \right]^2.
\end{align}

Our first result is as follows.
\begin{thm}\label{Thm one}
Let $\omega_{\phi_\beta}$ be given by Theorem \ref{existence of KEE}. Let $\Omega_\beta$ be given by \eqref{defi of Omegabeta}. There exists a uniform constant $C>0$, independent of $\beta\in(0, \frac{1}{2}]^r$, such that 
\begin{align*}
    C^{-1}\Omega_\beta\leq \omega_{\phi_\beta}\leq C\Omega_\beta.
\end{align*}
\end{thm}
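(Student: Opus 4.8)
The plan is to prove the two-sided bound through a Chern--Lu (Schwarz-lemma) inequality combined with the complex Monge--Amp\`ere equation, following the strategy of \cite{guenancia2020kahler} but now controlling the crossing contributions of $\Omega_\beta$ uniformly as several angles degenerate. First I would write $\Omega_\beta=\omega+\ddbar\rho_\beta$ with $\rho_\beta=-\sum_{i=1}^r\log[(1-|s_i|_{h_i}^{2\beta_i})/\beta_i]^2$, and set $u_\beta:=\phi_\beta-\rho_\beta$, so that $\omega_{\phi_\beta}=\Omega_\beta+\ddbar u_\beta$. Translating the \KE\ edge equation of Theorem \ref{existence of KEE} away from $\operatorname{supp}(D_\beta)$ gives a Monge--Amp\`ere equation of the form $(\Omega_\beta+\ddbar u_\beta)^n=e^{u_\beta+f_\beta}\,\Omega_\beta^n$, where $e^{f_\beta}\Omega_\beta^n$ encodes $\Ric\Omega_\beta+\Omega_\beta$ together with the adapted (polar) volume form. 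The quantity to estimate is the trace $u:=\operatorname{tr}_{\omega_{\phi_\beta}}\Omega_\beta$.

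The analytic heart is the Chern--Lu inequality applied to the identity map $(X\setminus\operatorname{supp}(D_\beta),\omega_{\phi_\beta})\to(X\setminus\operatorname{supp}(D_\beta),\Omega_\beta)$. Since $\Ric\omega_{\phi_\beta}=-\omega_{\phi_\beta}$ on the complement of the divisor, and assuming a uniform upper bound $B$ on the holomorphic bisectional curvature of $\Omega_\beta$, one obtains $\Delta_{\omega_{\phi_\beta}}\log u\ge -1-B\,u$. Using $\Delta_{\omega_{\phi_\beta}}u_\beta=n-u$, the auxiliary function $H:=\log u-(B+1)u_\beta$ satisfies $\Delta_{\omega_{\phi_\beta}} H\ge u-C_0$ for a constant $C_0$ depending only on $n$ and $B$. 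At a maximum of $H$ this forces $u\le C_0$; provided $u_\beta$ is uniformly bounded (see below), this yields $\operatorname{tr}_{\omega_{\phi_\beta}}\Omega_\beta\le C$, i.e. the lower bound $\omega_{\phi_\beta}\ge C^{-1}\Omega_\beta$. Because $\omega_{\phi_\beta}$ is an incomplete edge metric, the maximum principle cannot be invoked naively: I would instead attain the maximum by a barrier argument, subtracting a small multiple of a potential that tends to $-\infty$ along every component $D_i$ (for instance $\epsilon\sum_i\log(-\log|s_i|_{h_i}^2)$), running the estimate on the interior, and letting $\epsilon\to 0$; alternatively one may use an Omori--Yau maximum principle justified by $\Ric\omega_{\phi_\beta}=-\omega_{\phi_\beta}$.

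To close both inequalities I would couple this with the Monge--Amp\`ere equation. Writing $\lambda_1,\dots,\lambda_n$ for the eigenvalues of $\omega_{\phi_\beta}$ relative to $\Omega_\beta$, the trace bound gives $\lambda_j\ge C^{-1}$ for all $j$, while $\prod_j\lambda_j=\det_{\Omega_\beta}\omega_{\phi_\beta}=e^{u_\beta+f_\beta}$. Hence a uniform two-sided bound on the density $e^{u_\beta+f_\beta}$ promotes the lower eigenvalue bound to a uniform upper bound $\lambda_j\le C'$, giving $\omega_{\phi_\beta}\le C'\Omega_\beta$. This step requires two uniform inputs: a uniform $C^0$ estimate on the potential $u_\beta$, which I would obtain from the Monge--Amp\`ere equation by the usual comparison/$L^\infty$ argument together with the uniform integrability of the reference volume form as $\beta\to 0$; and uniform upper and lower bounds on $f_\beta$, reflecting that $\Omega_\beta$ is an approximate \KE\ metric (in one variable $\omega_{\eta,\mathbb{D}^*}$ has constant Ricci $-2$, and the crossing model is a product of such factors up to controlled error).

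The main obstacle I anticipate is establishing the uniform upper bound $B$ on the holomorphic bisectional curvature of $\Omega_\beta$, independent of $\beta\in(0,\tfrac12]^r$, precisely where it is needed in the snc setting. Near a point where $m$ components meet, $\Omega_\beta$ is quasi-isometric to a product $\sum_{i=1}^m\omega_{\beta_i,\mathbb{D}^*}^{(i)}+\sum_{j=m+1}^n\sqrt{-1}\,dz_j\wedge d\bar z_j$, whose curvature is block diagonal and uniformly bounded above factor-by-factor; the real work is to show that the error terms coming from the ambient metric $\omega$, the Hermitian metrics $h_i$, and the failure of the potential to split exactly as a sum do not spoil this bound, and in particular that the mixed curvature components between distinct $z_i,z_j$ directions stay controlled as several $\beta_i\to 0$ simultaneously and at different rates. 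Handling these crossing terms uniformly, together with making the barrier/maximum-principle argument work near the deepest strata of $D$, is where the snc generalization genuinely departs from the smooth-divisor case of \cite{guenancia2020kahler}.
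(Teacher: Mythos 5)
Your overall skeleton (Chern--Lu on the identity map, a barrier-assisted maximum principle, then the Monge--Amp\`ere equation to upgrade the trace bound to a two-sided bound) is the standard one, and your last two steps do align with what the paper ultimately does. But the proposal has a genuine gap at exactly the point you yourself flag as ``the main obstacle'': the uniform upper bound $B$ on the holomorphic bisectional curvature of the crossing reference metric $\Omega_\beta$, independent of $\beta\in(0,\frac{1}{2}]^r$, is never established --- it is assumed. Everything downstream (the inequality $\Delta_{\omega_{\phi_\beta}}\log u\ge -1-Bu$, the constant $C_0$, hence the final constant $C$) depends on it, so without it the argument does not close. This estimate is not a routine perturbation of the one-variable model: the mixed curvature components between distinct conical directions, with several angles degenerating at possibly different rates, are precisely where the snc case departs from the smooth-divisor case. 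Such a bound is in fact true for all angles at most $\frac{1}{2}$ --- it is the snc extension of \cite[Proposition A.1]{JMR}, cf.\ \cite{LS} --- but proving or at least quoting it is the actual mathematical content your proposal leaves missing, and the paper explicitly chooses not to go this way because of the computations it entails.

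The paper instead circumvents the curvature question entirely. It splits the reference metric as $\Omega_\beta=\omega+\sum_{i=1}^r\ddbar\psi_{\beta_i}$ and introduces the single-divisor edge metrics $\Omega_{\beta_i}=\frac{1}{r}\omega+\ddbar\psi_{\beta_i}$ of \eqref{Defi of Omegabetai}, whose bisectional curvature bound is exactly Guenancia's smooth-divisor result (Lemma \ref{erchong}) and requires no new computation. For each fixed $i$ it then approximates $\omega_{\phi_\beta}$ by genuine \KE\;edge metrics $\omega_{i,\ell}$ with singularities only along the smooth divisor $D_i$, obtained by Demailly regularization of the potentials coming from the other components (the Datar--Song trick); it proves $\omega_{i,\ell}\ge C\,\Omega_{\beta_i}$ by Chern--Lu with target $\Omega_{\beta_i}$, passes to the limit $\ell\to\infty$ using Kolodziej's stability theorem together with Evans--Krylov and bootstrapping, and finally sums over $i$ to get $\omega_{\phi_\beta}\ge \frac{C}{r}\Omega_\beta$; the upper bound then follows from the uniform equivalence of the volume forms, just as in your final step. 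So the decisive idea you are missing is this decomposition-and-regularization scheme, which replaces the unproven snc curvature bound by the already known smooth-divisor one.
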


The key point of Theorem \ref{Thm one} is that the constant $C$ is uniform with respect to small $\beta_i, i=1,\dots, r$. According to Theorem \ref{Thm one} and Lebesgue's Dominated Convergence Theorem, we obtain the weak convergence from $\omega_{\phi_\beta}$ to the \KE\;mixed cusp and edge metric $\omega_0$ constructed in \cite{Gmix} as some of the cone angles tend to $0$. In particular, when $\beta\to 0\in[0, 1)^r$, the limiting metric of such $\omega_{\phi_\beta}$ is the unique \KE\;cusp metric on $\displaystyle(X, \sum_{i=1}^r D_i)$ constructed in \cite{kobayashi1984kahler, TianYau, wu2006higher}. More precisely, the following result is shown in section \ref{sec: glo convergence of kee}.
\begin{thm}\label{Thm three}
The \KE\;crossing edge metric $\omega_{\phi_\beta}$ converges to a \KE\;mixed cusp and edge metric on $(X, D_\beta)$ globally in a weak sense and locally in a strong sense when some of the cone angles tend to $0$. In particular, $\omega_{\phi_\beta}$ converges to the \KE\;cusp metric on $\displaystyle(X, \sum_{i=1}^r D_i)$ in the above sense when $\beta\to 0\in[0, 1)^r$.
\end{thm}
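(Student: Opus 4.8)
The plan is to follow the strategy behind Guenancia's Theorem \ref{G Thm A}, upgrading the uniform metric equivalence of Theorem \ref{Thm one} to genuine convergence, now in the mixed snc regime. Fix a subset $I\subseteq\{1,\dots,r\}$ of indices whose angles degenerate, and suppose $\beta_i\to 0$ for $i\in I$ while $\beta_j\to\beta_j^\infty\in(0,1)$ for $j\notin I$. The first step is to identify the limiting reference geometry. Using the elementary limit $\tfrac{1-|s_i|^{2\beta_i}_{h_i}}{\beta_i}\to -\log|s_i|^2_{h_i}$ as $\beta_i\to 0$ (since $\tfrac{1-e^{\beta_i a}}{\beta_i}\to -a$ for $a=\log|s_i|^2_{h_i}<0$), the potential $\log\bigl[\tfrac{1-|s_i|^{2\beta_i}_{h_i}}{\beta_i}\bigr]^2$ converges to $\log\bigl(\log|s_i|^2_{h_i}\bigr)^2$, so that $\Omega_\beta$ from \eqref{defi of Omegabeta} converges, locally smoothly on $X\setminus\operatorname{supp}(D_\beta)$, to the mixed reference metric $\Omega_\infty$ that is of cusp type along $D_i$ ($i\in I$) and of edge type with angle $2\pi\beta_j^\infty$ along $D_j$ ($j\notin I$). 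I would also record that the associated Monge--Ampère densities converge locally smoothly. The candidate limit $\omega_0$ is the \KE\;mixed cusp and edge metric of \cite{Gmix} (reducing, when $I=\{1,\dots,r\}$, to the \KE\;cusp metric of \cite{kobayashi1984kahler, TianYau, wu2006higher}), which exists and is unique.

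Next I would establish the local strong convergence on $X\setminus\operatorname{supp}(D_\beta)$. On any compact $K\Subset X\setminus\operatorname{supp}(D_\beta)$, Theorem \ref{Thm one} together with the smooth convergence $\Omega_\beta\to\Omega_\infty$ shows that $\omega_{\phi_\beta}$ is uniformly equivalent to a fixed smooth \K\;metric on $K$; in particular the complex Monge--Ampère equation for $\phi_\beta$ coming from Theorem \ref{existence of KEE} is uniformly elliptic on $K$, with right-hand side converging smoothly by the first step. Given the uniform $C^0$ control of the normalized potentials that underlies Theorem \ref{Thm one}, I would then run the standard bootstrapping: Evans--Krylov yields uniform $C^{2,\alpha}(K)$ bounds and Schauder estimates promote these to uniform $C^{k,\alpha}(K)$ bounds for every $k$. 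A diagonal subsequence over an exhaustion of $X\setminus\operatorname{supp}(D_\beta)$ then converges in $C^\infty_{\operatorname{loc}}(X\setminus\operatorname{supp}(D_\beta))$ to a metric solving the \KE\;mixed cusp and edge equation, and uniqueness of $\omega_0$ forces the whole family $\omega_{\phi_\beta}$ to converge.

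For the global weak convergence I would invoke the uniform upper bound $\omega_{\phi_\beta}\le C\Omega_\beta$ from Theorem \ref{Thm one} and argue by Lebesgue's Dominated Convergence Theorem, exactly as indicated after Theorem \ref{G Thm A}. The key analytic input is that the coefficients of $\Omega_\beta$ are bounded, uniformly in $\beta\in(0,\tfrac12]^r$, by a fixed locally integrable function near each $D_i$: in a model chart the worst term is $\tfrac{\beta_i^2}{|z_i|^{2(1-\beta_i)}(1-|z_i|^{2\beta_i})^2}$, and setting $y=-\beta_i\log|z_i|^2>0$ one finds its ratio to the cusp density $\tfrac{1}{|z_i|^2\log^2|z_i|^2}$ equals $\tfrac{y^2 e^{-y}}{(1-e^{-y})^2}$, which is bounded on $(0,\infty)$; hence it is dominated by an integrable multiple of $\tfrac{1}{|z_i|^2\log^2|z_i|^2}$ uniformly in $\beta_i$, and the local mass stays bounded. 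Combining this domination with the almost-everywhere convergence from the previous step, the currents $\omega_{\phi_\beta}$ converge weakly to $\omega_0$ on all of $X$. Specializing to $\beta\to 0\in[0,1)^r$ gives the all-cusp statement.

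The main obstacle I anticipate is the local higher-order regularity: the uniform ellipticity provided by Theorem \ref{Thm one} is not by itself enough, and one must verify that both the reference metrics $\Omega_\beta$ and the Monge--Ampère densities converge smoothly and with uniform-in-$\beta$ bounds on compacts away from the divisor, so that Evans--Krylov and Schauder apply uniformly across the degenerating family. A secondary, more technical point is arranging a single locally integrable dominating function for the Dominated Convergence Theorem uniformly over all the rates at which the various $\beta_i$ tend to $0$; this is elementary but must be handled carefully near the crossings of $\operatorname{supp}(D_\beta)$, where several cusp/edge factors interact.
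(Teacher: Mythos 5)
Your proposal is correct and follows essentially the same route as the paper: smooth local convergence of the reference metrics $\Omega_\beta$ to a mixed cusp and edge model (the paper's Lemma \ref{convergence of the refer metric}), uniform estimates from Theorem \ref{Thm one} upgraded via Evans--Krylov and bootstrapping to $C^\infty_{\operatorname{loc}}$ subsequential convergence away from the divisor, and identification of the limit with the unique \KE\;mixed cusp and edge metric (the paper invokes Yau's generalized maximum principle for this uniqueness step). The only cosmetic difference is that for global weak convergence the paper argues by uniform mass bounds and weak compactness of currents rather than your explicit dominated-convergence estimate, but the latter is precisely the mechanism the paper itself describes after Theorem \ref{G Thm A}, so the two arguments are interchangeable.
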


\begin{rmk}
\textup{In Theorem \ref{Thm three}, we assume $\displaystyle K_X+\sum_{i=1}^r D_i$ to be ample to ensure the existence of a limiting \KE\;metric by the work of Kobayashi \cite{kobayashi1984kahler} and Tian--Yau--Wu \cite{TianYau,wu2006higher}. An interesting open problem is to study the convergence of $\omega_{\phi_\beta}$ when we only assume the ampleness of $K_X+D_\beta$ for $0<\beta_i\ll 1$, $i=1,\dots, r$.}
\end{rmk}

Theorem \ref{Thm one} and Theorem \ref{Thm three} generalize Guenancia's Theorem \ref{G Thm A} from the smooth case to the snc case.

As an application of Theorem \ref{Thm one}, we study the asymptotic behavior of the \KE\;crossing edge metric $\omega_{\phi_\beta}$ near $D_\beta$ when the smallest cone angle approaches $0$, with possibly other cone angles also converging to $0$.

To state the result, without loss of generality, we assume for $\beta=(\beta_1,\dots, \beta_r)$ there holds $\beta_1\leq \beta_2\leq \cdots\leq \beta_r$. Fix a point $p\in D_\beta$. Choose a coordinate chart $(U, \{z_i\}_{i=1}^n)$ containing $p$ such that $D_j|_{U}=\{z_j=0\}$ for $j=1,\dots, m$, $m\leq n$. Consider a small neighborhood $U_\beta$ about $p$ defined by
\begin{align*}
U_\beta:=\left\{z\in (\mathbb{C}^*)^m\times \mathbb{C}^{n-m}: |z_1|<e^{-\frac{1}{2\beta_1}}, |z_j|<\left(\frac{\beta_1}{\beta_j}\right)^{\frac{1}{\beta_j}}, j=2,\dots, m, |z_\ell|<1, \ell=m+1,\dots, n\right\}.
\end{align*}
We show that after normalization by factor $\beta_1^{-2}$, a subsequence of metrics $\omega_{\phi_\beta}$ converges to a mixed cylinder and edge metric on $(\mathbb{C}^*)^m\times \mathbb{C}^{n-m}$ (see Definition \ref{def of mix} for more details) as $\beta_1$ tends to $0$. The limiting metric has cylindrical part along the component $D_1$ where the cone angle $\beta_1$ approaches $0$ while has conical singularities along other components. More precisely, the third result of this paper is as follows:
\begin{thm}\label{Thm two}
Let $\{\beta_{1, k}\}_{k\in\mathbb{N}}$ be a sequence of positive numbers converging to $0$. Assume further that $\{\beta_{i, k}\}_{k\in\mathbb{N}}$ does not converge to $0$ for each $i=2,\dots, r$ and all $\beta_{i, k}\in (0, \frac{1}{2}]$. Let ${\omega}_{\phi_{\beta_k}}$ be the (negatively curved) \KE\;crossing edge metric on $(X, D_k=\sum_{i=1}^r (1-\beta_{i, k})D_i)$. Then there exists a subsequence of the metric spaces $(U_{\beta_k}, \omega_{\phi_{\beta_k}})$ converging in pointed Gromov-Hausdorff topology to $((\mathbb{C}^*)^m\times \mathbb{C}^{n-m}, \omega_\infty)$ where $\omega_\infty$ is a mixed cylinder and edge metric.
\end{thm}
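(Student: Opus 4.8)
The plan is to derive Theorem \ref{Thm two} from the uniform two-sided bound of Theorem \ref{Thm one}, which lets me replace the \KE\ crossing edge metric $\omega_{\phi_{\beta_k}}$ by the fully explicit reference metric $\Omega_{\beta_k}$ wherever the geometry must be computed by hand. First I would pass to a subsequence so that $\beta_{1,k}\to 0$ while $\beta_{i,k}\to\beta_i^\infty\in(0,\tfrac12]$ for each $i=2,\dots,r$, and work in the fixed coordinate chart $(U,\{z_i\})$ around $p$ in which $D_j=\{z_j=0\}$. On such a chart, up to the bounded smooth factors coming from the local potentials of the hermitian metrics $h_i$, the reference metric $\Omega_{\beta_k}$ is a sum of the one-dimensional model edge metrics $\omega_{\beta_{i,k},\mathbb{D}^*}$ in the directions $i=1,\dots,m$ together with a uniformly smooth transverse part in the directions $m+1,\dots,n$. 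The whole strategy is then to build a $\beta_k$-dependent biholomorphism of $U_{\beta_k}$ onto a fixed model domain under which the normalized reference metric $\beta_{1,k}^{-2}\Omega_{\beta_k}$ converges to a product of a flat cylinder, flat cones, and a Euclidean factor; Theorem \ref{Thm one} will then force $\beta_{1,k}^{-2}\omega_{\phi_{\beta_k}}$ to have the same limit.

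Next I would carry out this model computation direction by direction, which is where the normalizations hidden in the definition of $U_{\beta_k}$ are used. In the $z_1$-direction the cone angle degenerates, and on the collar $\{|z_1|<e^{-1/(2\beta_{1,k})}\}$ the factor $(1-|z_1|^{2\beta_{1,k}})^2$ stays bounded between two positive constants; after recentering at the collar boundary $|z_1|=e^{-1/(2\beta_{1,k})}$ and passing to the logarithmic coordinate $w_1=\log z_1$, the rescaled metric $\beta_{1,k}^{-2}\omega_{\beta_{1,k},\mathbb{D}^*}$ converges to the flat cylindrical metric on the first $\mathbb{C}^*$ factor. This is the snc incarnation of the cusp-to-cylinder mechanism behind Theorem \ref{G Thm C}. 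In the directions $j=2,\dots,m$ the angle stays bounded below, so on the region $\{|z_j|<(\beta_{1,k}/\beta_{j,k})^{1/\beta_{j,k}}\}$ one has $|z_j|^{2\beta_{j,k}}\to 0$ and hence $(1-|z_j|^{2\beta_{j,k}})^2\to 1$; using the scale invariance of the flat cone $\beta_{j,k}^2|z_j|^{-2(1-\beta_{j,k})}\sqrt{-1}dz_j\wedge d\bar z_j$, the coordinate change $\zeta_j=z_j^{\beta_{j,k}}/\beta_{1,k}$ absorbs the factor $\beta_{1,k}^{-2}$ and produces, in the limit, a flat cone of angle $2\pi\beta_j^\infty$. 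The peculiar exponent $1/\beta_{j,k}$ in the definition of $U_{\beta_k}$ is precisely the one making this factor neither collapse nor blow up under the common normalization $\beta_{1,k}^{-2}$. The transverse directions converge trivially to the Euclidean $\mathbb{C}^{n-m}$, and the product is the candidate limit $\omega_\infty$.

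With the reference limit understood, I would upgrade it to the genuine metric. Theorem \ref{Thm one} makes the complex Monge--Amp\`ere equation for the normalized potentials uniformly elliptic on compact subsets of the regular locus $(\mathbb{C}^*)^m\times\mathbb{C}^{n-m}$, so the Evans--Krylov and Schauder estimates give uniform $C^{k}$ bounds there and, after a diagonal argument, $C^\infty_{\operatorname{loc}}$ subconvergence of $\beta_{1,k}^{-2}\omega_{\phi_{\beta_k}}$ to a \K\ metric $g_\infty$. Since Ricci curvature is scale invariant, the \KE\ equation $\Ric\,\omega_{\phi_{\beta_k}}=-\omega_{\phi_{\beta_k}}+[D_k]$ reads $\Ric(\beta_{1,k}^{-2}\omega_{\phi_{\beta_k}})=-\beta_{1,k}^2\,(\beta_{1,k}^{-2}\omega_{\phi_{\beta_k}})$ off the divisor, whose right-hand side tends to $0$; hence $g_\infty$ is Ricci-flat. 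Combined with the two-sided bound $C^{-1}\omega_\infty\le g_\infty\le C\omega_\infty$ inherited from Theorem \ref{Thm one} and the product structure of $\omega_\infty$, Ricci-flatness pins down $g_\infty$ as a mixed cylinder and edge metric: cylindrical along $D_1$, a flat cone of angle $2\pi\beta_j^\infty$ along $D_j$ for $2\le j\le m$, and Euclidean transversally.

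Finally I would convert smooth convergence on the regular locus into pointed Gromov--Hausdorff convergence. The uniform equivalence with $\Omega_{\beta_k}$ gives uniform non-collapsing and uniform control of the distance functions on $U_{\beta_k}$, so the $C^\infty_{\operatorname{loc}}$ convergence can be promoted to pointed GH convergence by a standard exhaustion-and-gluing argument, provided one controls neighborhoods of the singular strata. This control is the step I expect to be the main obstacle: one must show that the collar along $D_1$, where the cusp-to-cylinder transition occurs, and the cone tips along $D_2,\dots,D_m$ contribute no spurious Gromov--Hausdorff mass, and that the pieces glue consistently across the three genuinely different scales $e^{-1/(2\beta_{1,k})}$, $(\beta_{1,k}/\beta_{j,k})^{1/\beta_{j,k}}$, and $1$ appearing in the definition of $U_{\beta_k}$. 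Here the explicit two-sided bound of Theorem \ref{Thm one} is indispensable, since near the singularities it lets me replace $\omega_{\phi_{\beta_k}}$ by the completely explicit $\Omega_{\beta_k}$ when constructing $\varepsilon$-approximations; reconciling the simultaneous but unequal degeneration rates of the several cone angles is the essential new difficulty relative to Guenancia's single-component Theorem \ref{G Thm C}.
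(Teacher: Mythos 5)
Your proposal is correct and follows essentially the same route as the paper: reduce to the explicit model metric via Theorem \ref{Thm one}, rescale by $\beta_{1,k}^{-2}$ through an anisotropic coordinate change (the paper's single linear-scaling biholomorphism $\Psi_{\beta}$, which your per-direction logarithmic and power-map coordinates reproduce up to bounded factors), obtain $C^\infty_{\operatorname{loc}}$ subconvergence via Evans--Krylov and bootstrapping to a Ricci-flat limit uniformly equivalent to the mixed cylinder and edge model, and then promote this to pointed Gromov--Hausdorff convergence. The only notable differences are cosmetic: the paper works with single-valued scalings $z_j=(\beta_{1}/\beta_{j})^{1/\beta_j}w_j$ rather than your multivalued maps $\zeta_j=z_j^{\beta_{j,k}}/\beta_{1,k}$, and it identifies the limit as a mixed cylinder and edge metric directly from the two-sided bound and Definition \ref{def of mix} (which only requires quasi-isometry), so your appeal to Ricci-flatness to ``pin down'' the limit is unnecessary for the statement as formulated.
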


Theorem \ref{Thm two} is a generalization of \cite[Theorem C]{guenancia2020kahler} which shows the convergence of \KE\;edge metrics to a cylindrical metric in the smooth case. Regarding complex dimension $1$, i.e., in the Riemann surface case, but in the positive curvature regime, Rubinstein--Zhang showed that the (American) football equipped with the Ricci soliton metric converges to the cone-cigar soliton on $\mathbb{R}_{+}$ as two cone angles converge to $0$ at a different speed and to a flat cylindrical metric as two cone angles converge to $0$ at a comparable speed \cite[Theorem 1.1-1.3]{RZ}. In \cite{RZ}, the $S^1$-symmetry of the metric plays an important role in the proof. In higher dimensions, we generalize Theorem \ref{Thm two} to allow more than one cone angles to tend to $0$ and study the limit behavior of metrics under this joint degeneration of cone angles. The result is as follows.
\begin{thm}\label{r2 case}
Let $\{\beta_{1, k}\}_{k\in\mathbb{N}}$ be a sequence of positive numbers converging to $0$. Assume further that for any $i\in \{2, \dots, r\}$ such that $\{\beta_{i, k}\}_{k\in\mathbb{N}}$ also converges to $0$, there holds $\lim_{k\to\infty}\frac{\beta_{1, k}}{\beta_{i, k}}\in[0, 1]$ and all $\beta_{i, k}\in(0, \frac{1}{2}]$. Let ${\omega}_{\phi_{\beta_k}}$ be the (negatively curved) \KE\;crossing edge metric on $(X, D_k=\sum_{i=1}^r (1-\beta_{i, k})D_i)$. Then there exists a subsequence of the metric spaces $(U_{\beta_k}, \omega_{\phi_{\beta_k}})$ converging in pointed Gromov-Hausdorff topology to $((\mathbb{C}^*)^m\times \mathbb{C}^{n-m}, \omega_\infty)$, where $\omega_\infty$ is a mixed cylinder and edge metric with cylindrical part along components whose cone angles converge to $0$ and conical part along other components.
\end{thm}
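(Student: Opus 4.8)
The plan is to localize near $p$, pass to the normalized metrics $\tilde\omega_k:=\beta_{1,k}^{-2}\omega_{\phi_{\beta_k}}$ (the renormalization already used in Theorem \ref{Thm two}), and transfer the analysis of $\tilde\omega_k$ to the explicit reference metric through the uniform two--sided estimate of Theorem \ref{Thm one}. Setting $\tilde\Omega_k:=\beta_{1,k}^{-2}\Omega_{\beta_k}$, Theorem \ref{Thm one} gives $C^{-1}\tilde\Omega_k\le\tilde\omega_k\le C\tilde\Omega_k$ on $U_{\beta_k}$ with $C$ independent of $k$. I would then proceed in four steps: (i) after direction--adapted biholomorphic changes of coordinates, show $\tilde\Omega_k\to g_\infty$ in $C^\infty_{\operatorname{loc}}$, where $g_\infty$ is an explicit flat mixed cylinder and edge model on $(\mathbb{C}^*)^m\times\mathbb{C}^{n-m}$; (ii) upgrade the uniform equivalence together with interior estimates to a $C^\infty_{\operatorname{loc}}$ subsequential limit $\omega_\infty$ of $\tilde\omega_k$ on the regular locus; (iii) identify $\omega_\infty$ as a mixed cylinder and edge metric; (iv) promote the smooth convergence, together with the uniform metric bounds, to pointed Gromov--Hausdorff convergence.

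For step (i), after passing to a subsequence so that every index is either degenerating ($\beta_{i,k}\to0$) or conical ($\beta_{i,k}\to\beta_{i,\infty}>0$), I would treat separately the three families of coordinates appearing in the definition of $U_{\beta_k}$. Along a degenerating direction $i$ (including $i=1$) the natural coordinate is the logarithm $\zeta_i=\log z_i$, of fixed period $2\pi\sqrt{-1}$; in it the one--dimensional model $\beta_{1,k}^{-2}\omega_{\beta_{i,k},\mathbb{D}^*}$ reads $\frac{\beta_{i,k}^2}{\beta_{1,k}^2}\,\frac{e^{2\beta_{i,k}\operatorname{Re}\zeta_i}}{(1-e^{2\beta_{i,k}\operatorname{Re}\zeta_i})^2}\,|d\zeta_i|^2$, and after a $\beta_{i,k}$--dependent real shift centering $U_{\beta_k}$ at the cylindrical scale it converges on compact sets to a flat cylinder metric $c_i|d\zeta_i|^2$ with $c_i\in(0,\infty)$. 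The rate hypothesis $\lim_k\beta_{1,k}/\beta_{i,k}\in[0,1]$ enters precisely here: it keeps the scales $\beta_{1,k}^{-2}$ and $\beta_{i,k}^{-2}$ comparable, so that under the single global factor $\beta_{1,k}^{-2}$ the $z_i$--factor neither collapses nor blows up; moreover the rescaling sends both the divisor $\{z_i=0\}$ and the outer boundary of $U_{\beta_k}$ to infinite distance, so the limit factor is a complete cylinder rather than a half--cylinder. Along a conical direction $j$ the power coordinate $v_j=z_j^{\beta_{j,k}}/\beta_{1,k}$ turns $\beta_{1,k}^{-2}\omega_{\beta_{j,k},\mathbb{D}^*}$ into $\frac{|dv_j|^2}{(1-\beta_{1,k}^2|v_j|^2)^2}\to|dv_j|^2$, the flat edge metric of cone angle $2\pi\beta_{j,\infty}$; here the defining bound $|z_j|<(\beta_{1,k}/\beta_{j,k})^{1/\beta_{j,k}}$ is tuned so that the rescaled radius stays finite ($\sim\beta_{j,\infty}^{-1}$), producing a bounded cone with its vertex. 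Finally, along a smooth direction $\ell$ the linear coordinate $v_\ell=z_\ell/\beta_{1,k}$ makes $\beta_{1,k}^{-2}\omega$ converge to the Euclidean metric on $\mathbb{C}$. It remains to check that in $\Omega_{\beta_k}$ the couplings between distinct factors and the contributions of $\omega$ and of the weights $h_i$ are of lower order in $\beta_{1,k}$ and drop out, so that $\tilde\Omega_k$ converges to the product $g_\infty$ of these cylinder, edge and Euclidean factors.

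For step (ii), the uniform equivalence $C^{-1}g_\infty\le\tilde\omega_k\le Cg_\infty$ on compact subsets of the regular locus provides uniform ellipticity; combined with the local higher--order (Evans--Krylov and Schauder) estimates for the complex Monge--Amp\`{e}re equation satisfied by the potential of $\omega_{\phi_{\beta_k}}$ relative to $\Omega_{\beta_k}$ --- the very estimates, in the edge H\"{o}lder spaces along the conical directions, that yield the $C^\infty_{\operatorname{loc}}$ convergence in Theorem \ref{Thm three} --- one obtains uniform $C^k_{\operatorname{loc}}$ bounds away from the conical loci. Arzel\`{a}--Ascoli then extracts a subsequence with $\tilde\omega_k\to\omega_\infty$ in $C^\infty_{\operatorname{loc}}$ on the regular part, and $\omega_\infty$ is uniformly equivalent to $g_\infty$. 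Since the Ricci form is scale invariant, on the regular locus $\Ric\tilde\omega_k=\Ric\omega_{\phi_{\beta_k}}=-\omega_{\phi_{\beta_k}}=-\beta_{1,k}^2\,\tilde\omega_k\to0$, so $\omega_\infty$ is a Ricci--flat \K\;edge metric with the prescribed cone angles. For step (iii), I would pass to the cover of $(\mathbb{C}^*)^m\times\mathbb{C}^{n-m}$ unwrapping the cylindrical (and, where possible, the conical) factors, on which $\omega_\infty$ lifts to a Ricci--flat \K\;metric quasi--isometric to a flat metric of bounded geometry; a Liouville/rigidity argument then forces the lift to be flat, whence $\omega_\infty$ is a mixed cylinder and edge metric in the sense of Definition \ref{def of mix}.

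For step (iv), pointed Gromov--Hausdorff convergence follows along standard lines: the two--sided bounds control distances and volumes and rule out collapsing in the directions that open up (while keeping the conical factors of bounded diameter $\sim\beta_{j,\infty}^{-1}$), the $C^\infty_{\operatorname{loc}}$ convergence gives convergence of balls away from the conical loci, and near each conical locus the fixed--angle edge model furnishes uniform charts, so the singular set does not obstruct the construction of $\varepsilon$--isometries. I expect the main obstacle to be step (i) and the bookkeeping it imposes on the remainder: because several cone angles tend to $0$ at genuinely different speeds, no isotropic rescaling normalizes all directions simultaneously, and one must verify that the anisotropic coordinate changes above are mutually compatible with the single factor $\beta_{1,k}^{-2}$. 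The hypothesis $\lim_k\beta_{1,k}/\beta_{i,k}\in[0,1]$ is exactly the condition guaranteeing this compatibility --- that $\beta_{1,k}$ is, asymptotically, the smallest and a comparable scale --- so that the limit is a genuine non--degenerate metric, cylindrical in every degenerating direction and conical in the rest. Establishing the uniform control of the off--diagonal terms of $\Omega_{\beta_k}$ across all these regimes, and confirming completeness of the cylindrical factors in the limit, is the technical core on which the argument rests.
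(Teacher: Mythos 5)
Your proposal is correct and follows essentially the same route as the paper: renormalize by $\beta_{1,k}^{-2}$, use Theorem \ref{Thm one} to transfer everything to the explicit reference metric, show that under anisotropic per-coordinate rescalings the rescaled model converges in $C^\infty_{\operatorname{loc}}$ to an explicit mixed cylinder and edge metric (the paper uses constant dilations $w_i\mapsto e^{-1/\beta_i}w_i$ for the comparable-rate directions and $w_i\mapsto(\beta_1/\beta_i)^{1/\beta_i}w_i$ for the slower-rate and conical directions, rather than your logarithmic/power coordinates, but this difference is cosmetic), then get smooth subsequential convergence via Evans--Krylov and Arzel\`a--Ascoli together with $\Ric\bar\omega_k=-\beta_{1,k}^2\bar\omega_k\to0$, and finally deduce pointed Gromov--Hausdorff convergence from the local smooth convergence. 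Two remarks. First, your step (iii) is superfluous and is the one place your argument could genuinely get stuck: the paper's Definition \ref{def of mix} only requires the limit to be \emph{quasi-isometric} to $\omega_{\operatorname{mix}}$, which your step (ii) already delivers, so no Liouville/rigidity theorem is invoked; this is fortunate, because ``unwrapping'' a conical factor of angle $2\pi\beta_{j,\infty}$ by a covering map is not possible, and flatness of the limit is neither claimed nor needed in the paper (unlike Guenancia's smooth-divisor Theorem \ref{G Thm C}, whose notion of cylindrical metric is rigid). Second, your observation that the conical directions of $U_{\beta_k}$ have rescaled radius staying finite ($\sim\beta_{j,\infty}^{-1}$) would actually contradict the claimed pointed limit $(\mathbb{C}^*)^m\times\mathbb{C}^{n-m}$, whose conical factors have infinite extent; this comes from the introduction's exponent $1/\beta_j$ in the definition of $U_\beta$, and one should instead use the definition from Section \ref{sec: asm behavior near D} (exponent $1/(2\beta_j)$, as in the domain of $\Phi_\beta$), under which the rescaled domains exhaust $(\mathbb{C}^*)^m\times\mathbb{C}^{n-m}$ and step (iv) goes through as in Theorem \ref{thm: convergence of kee near divisor}.
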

In the language of \cite{RZ}, \cite[Theorem 1.1-1.3]{RZ} completely describe, in a geometric sense, the boundary behavior of the body of ample angles \cite{Yconvex} of the pair $(S^2, N+S)$, where $N$ and $S$ denote the north and south poles of the Riemann sphere respectively. In higher dimensions, given a pair $(X, \widetilde{D}=\sum_{i=1}^r D_i)$, Theorem \ref{r2 case} is far from being a satisfactory description of the boundary of the body of ample angles of $(X, \widetilde{D})$ in the negative curvature regime. Part of the reason is that different subsequences may converge to different mixed cylinder and edge metrics. A complete characterization of the moduli space of such $(X, \widetilde{D})$ endowed with \KE\;crossing edge metrics in the sense of \cite{rubinstein2014smooth} is still open.
\subsection{Main ingredients of the proofs}\label{main ingre}
We first recall the key ingredient in the proof of Theorem \ref{G Thm A} is the  boundedness of the holomorphic bisectional curvature of the model metric $\omega_{\beta, \operatorname{mod}}$, which makes it possible to use the Chern--Lu inequality to obtain the Laplacian estimates, cf. \cite[Theorem 3.2]{guenancia2020kahler}. Therefore, one way to prove corresponding results of Theorem \ref{G Thm A} in the snc setting is to extend \cite[Theorem 3.2]{guenancia2020kahler} to the snc setting, i.e., prove boundedness of holomorphic bisectional curvature of the model metric $\Omega_\beta$ (see \eqref{defi of Omegabeta} for details). However, this will lead to complicated computations. Indeed, the needed boundedness of holomorphic bisectional curvatures in Theorem \ref{G Thm A} shares the same idea with \cite[Proposition A. 1]{JMR}, where an upper bound on the bisectional curvature of a conical reference metric is obtained when the divisor is smooth. \cite[Proposition A. 1]{JMR} can be extended to the snc setting by assuming all the cone angles $\beta_i$ along each components $D_i$ are less than $\frac{1}{2}$ (instead of less than $1$ in the one smooth divisor case), cf. \cite[Theorem 1.2]{LS}. In this paper, however, we will not follow that approach, i.e., we avoid estimating the bound of the holomorphic bisectional curvature of $\Omega_\beta$ and therefore avoid complicated computations.

Instead, the proof of Theorem \ref{Thm one} uses the regularization arguments of Datar--Song \cite{Datar-Song} to reduce the snc case to the smooth case. More precisely, in Section \ref{sec: small angle limits of KEE}, we approximate the \KE\;crossing edge metric $\omega_{\phi_\beta}$ by a sequence of \KE\;edge metrics with cone singularities along a single component of the divisor $D_\beta$. In the mean time, one realizes the reference metric $\Omega_\beta$ as a sum of \K\;edge metrics $\{\Omega_{\beta_i}\}_{i=1,\dots, r}$ (see \eqref{Defi of Omegabetai}) with cone singularities along a smooth component. The comparison result of $\omega_{\phi_\beta}$ with each $\Omega_{\beta_i}$ is obtained by first showing comparisons between approximating metrics and $\Omega_{\beta_i}$ and then taking a limit in an appropriate sense. By adding things up, one finally obtains the Laplacian estimates of $\omega_{\phi_\beta}$ and $\Omega_\beta$, which is essentially the content of Theorem \ref{Thm one}.

An important observation is that the reference metric $\Omega_\beta$ has the property of converging to a \K\;metric with mixed cusp and edge singularities when some of the cone angles tend to $0$. This observation, combined with the content of Theorem \ref{Thm one}, give us the result of Theorem \ref{Thm three} as a corollary. As another consequence of Theorem \ref{Thm one}, Theorem \ref{Thm two} and Theorem \ref{r2 case} treat the limit behavior of the \KE\;crossing edge metric $\omega_{\phi_\beta}$ near the divisor $D_\beta$ when some of the cone angles approach $0$. After fixing a point in the divisor $D_\beta$, we first rescale the reference metric to obtain its convergence to a mixed cylinder and edge metric (see Definition \ref{def of mix}) as the smallest cone angle tends to $0$ in a small neighborhood of $D_\beta$. To obtain the pointed Gromov--Hausdorff convergence of the rescaled \KE\;crossing edge metric $\omega_{\phi_\beta}$ near the divisor, we actually show a stronger local smooth convergence result. We use Theorem \ref{Thm one} and the limit behavior of $\Omega_\beta$ mentioned above to obtain $C^0$-estimates of rescaled $\omega_{\phi_\beta}$. By a standard use of Evans--Krylov theory and Arzel\`{a}--Ascoli Theorem, we obtain the $C^\infty_{\operatorname{loc}}$-convergence of the rescaled $\omega_{\phi_\beta}$ as some of the cone angles tend to $0$.

\smallskip
\textbf{Acknowledgments.}
The author would like to thank H. Guenancia and J. Sturm for helpful conversations. The author is especially grateful to Y. A. Rubinstein for suggesting this problem and for his guidance and encouragement. This research was partly supported by NSF grant DMS-1906370.
\section{Small angle limits of the K\"ahler--Einstein crossing edge metrics}\label{sec: small angle limits of KEE}

Let $\mathbb{D}^*=\{z\in\mathbb{C}: 0<|z|<1\}$ be the punctured unit disc in $\mathbb{C}$. The first observation is, for $\eta\in(0, 1)$, the following K\"{a}hler metric
\begin{align*}
    \omega_{\eta, \mathbb{D}^*}:&=-\sqrt{-1}\partial \bar{\partial}\log(1-|z|^{2\eta})\\
    &=\frac{\sqrt{-1}\eta^2 |z|^{2\eta-2}}{(1-|z|^{2\eta})^2}dz\wedge d\bar{z}
\end{align*}
has negative constant curvature and cone singularity with cone angle $2\pi \eta$ at $0\in\mathbb{C}$. Indeed, direct calculation   using the Poincar\'{e}--Lelong formula \cite{GH} yields
\begin{align}
    \operatorname{Ric} \omega_{\eta, \mathbb{D}^*}=2\pi(1-\eta) \delta_0-2\omega_{\eta, \mathbb{D}^*},\label{eq: ric of omegaetaD}
\end{align}
where $\delta_0$ denotes the Dirac measure at $0$. 

For fixed $z\in \mathbb{D}^*$, 
\begin{equation}\label{eq: convergence of edge to cusp in 1d}
    \lim_{\eta\to 0} \frac{\eta^2 |z|^{2\eta-2}}{(1-|z|^{2\eta})^2}=\frac{1}{|z|^2\left(\log|z|^2\right)^2}.
\end{equation}
Thus, $\omega_{\eta, \mathbb{D}^*}$ converges uniformly to the Poincar\'{e} metric $\omega_{P, \mathbb{D}^*}$ defined in \eqref{eq: model cusp metric} for any compact $K\Subset \mathbb{D}^*$ when $\eta$ tends to $0$.
Note that
\begin{align*}
    \operatorname{Ric}\omega_{P, \mathbb{D}^*}=2\pi \delta_0-2\omega_{P, \mathbb{D}^*}.
\end{align*}
Thus, $\omega_{P, \mathbb{D}^*}$ is a \KE\;cusp metric on $\mathbb{D}^*$ with cusp singularity at $0$. Next we introduce a reference metric that generalizes $\omega_{\eta, \mathbb{D}^*}$ to higher dimensional manifolds.

\subsection{The reference metric}

From now on, let $(X, D_\beta)$ be an $n$-dimensional K\"{a}hler manifold with an $\mathbb{R}$-divisor $D_\beta=\sum_{i=1}^r (1-\beta_i)D_i$ such that $K_X+D_\beta$ is ample, where $\beta_i\in(0, 1)$ for $i=1,\dots, r$. Assume each $D_i$ is smooth and irreducible. We further assume $D_\beta$ is a simple normal crossing divisor, i.e., for any $p\in \operatorname{supp}(D_\beta)$ lying in the intersection of exactly $m$ divisors $D_1, \dots, D_m$, $m\leq n$, there exists a coordinate chart $(U, \{z_i\}_{i=1}^n)$ containing $p$ such that $D_j|_{U}=\{z_j=0\}$ for $j=1,\dots, m$. Let $s_i$ denote a defining holomorphic section of $D_i$ and $h_i=|\cdot|_{h_i}$ be a smooth hermitian metric on $L_{D_i}$, which is the line bundle induced by $D_i$. Let $\theta_i$ denote the curvature form of each $(L_{D_i}, h_i)$. We normalize $h_i$ such that $\log |s_i|_{h_i}^2+1<0$ for each $i$. Let $\omega$ be a fixed smooth \K\;metric with $\displaystyle [\omega]=c_1(K_X+D_\beta)$. Below we denote $\beta:=(\beta_1,\dots, \beta_r)$.

Define the reference metric:
\begin{align}\label{mod metric snc case}
    \Omega_\beta:=\omega-\sum_{i=1}^r \sqrt{-1}\partial \bar{\partial} \log \left[\frac{1-|s_i|^{2\beta_i}_{h_i}}{\beta_i} \right]^2.
\end{align}

\begin{rmk}
\textup{The appearance of $\beta_i$ in the denominator of the log term in the potential function does not affect the definition of the reference metric. We use this convention, following \cite{guenancia2020kahler}, since the potential function in \eqref{mod metric snc case} defined in such a way will be shown to converge weakly to a potential function for some \K\;cusp metric. See Lemma \ref{convergence of the refer metric} for details.}
\end{rmk}

$\Omega_\beta$ can be seen as a generalization of $\omega_{\eta, \mathbb{D}^*}$ to higher dimensiaonl manifolds. First, let us recall \cite[Lemma 3.1]{guenancia2020kahler}.

\begin{lem}\label{shouchong}
$\Omega_\beta$ is a K\"{a}hler edge form  with cone angle $2\pi\beta_i$ along $D_i$ for $i=1,\dots, r$. More precisely, 
\begin{align}\label{formula for omegabeta}
    \Omega_\beta=\omega +2\cdot\sum_{i=1}^r \left(\sqrt{-1}\frac{\beta_i^2}{|s_i|_{h_i}^{2-2\beta_i}(1-|s_i|^{2\beta_i}_{h_i})^2}\langle D^{1, 0}s_i, D^{1, 0}s_i \rangle-\frac{\beta_i |s_i|^{2\beta_i}_{h_i}}{1-|s_i|^{2\beta_i}_{h_i}}\theta_i\right),
\end{align}
where $D^{1, 0}$ is the $(1, 0)$-part of the Chern connection of $(L_{D_i}, h_i)$ for each $i$. Up to rescaling $\{h_i\}_{i=1,\dots, r}$, $\Omega_\beta\geq \frac{1}{2}\omega$.
\end{lem}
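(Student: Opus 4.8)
The plan is to reduce the statement to a single-divisor computation and add up. Since $\sqrt{-1}\partial\bar\partial$ is linear and the potential defining $\Omega_\beta$ in \eqref{mod metric snc case} is a sum over $i$, and since the constant $\beta_i$ inside the bracket contributes nothing, the $i$-th summand is exactly $-2\sqrt{-1}\partial\bar\partial\log(1-|s_i|^{2\beta_i}_{h_i})$. It therefore suffices to compute this term for a single smooth component and sum over $i$; this is the content of \cite[Lemma 3.1]{guenancia2020kahler}, whose computation I reproduce. Writing $f_i:=|s_i|^2_{h_i}$, I would first record two identities on $X\setminus D_i$ coming from the Chern connection of $(L_{D_i},h_i)$: the first-order identity $\partial f_i\wedge\bar\partial f_i=f_i\langle D^{1,0}s_i,D^{1,0}s_i\rangle$ and the second-order identity $\sqrt{-1}\partial\bar\partial f_i=\sqrt{-1}\langle D^{1,0}s_i,D^{1,0}s_i\rangle-f_i\theta_i$, the latter being equivalent to $\sqrt{-1}\partial\bar\partial\log f_i=-\theta_i$. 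Both are checked by a one-line computation in a local holomorphic frame $s_i=\sigma_i e_i$ with weight $h_i(e_i,e_i)=e^{-\varphi_i}$, using $D^{1,0}s_i=(\partial\sigma_i-\sigma_i\partial\varphi_i)e_i$.

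Next, setting $u=f_i^{\beta_i}$ and $A_i:=\sqrt{-1}\langle D^{1,0}s_i,D^{1,0}s_i\rangle\geq 0$, I would expand by the chain rule
\[
\sqrt{-1}\partial\bar\partial\log(1-u)=-\frac{\sqrt{-1}\partial\bar\partial u}{1-u}-\frac{\sqrt{-1}\partial u\wedge\bar\partial u}{(1-u)^2},
\]
and then substitute $\sqrt{-1}\partial\bar\partial u=\beta_i^2 f_i^{\beta_i-1}A_i-\beta_i f_i^{\beta_i}\theta_i$ and $\sqrt{-1}\partial u\wedge\bar\partial u=\beta_i^2 f_i^{2\beta_i-1}A_i$, both deduced from the two identities above. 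Collecting terms, the $A_i$-coefficient combines as $-\beta_i^2 f_i^{\beta_i-1}\bigl(\tfrac{1}{1-u}+\tfrac{u}{(1-u)^2}\bigr)=-\tfrac{\beta_i^2 f_i^{\beta_i-1}}{(1-u)^2}$ and the $\theta_i$-coefficient is $\tfrac{\beta_i f_i^{\beta_i}}{1-f_i^{\beta_i}}$. Multiplying by $-2$ and summing over $i$ reproduces \eqref{formula for omegabeta} precisely. The single delicate point is the ordering of the wedge products in the quotient rule, since $\partial u\wedge\bar\partial u=-\bar\partial u\wedge\partial u$: mishandling it produces a spurious factor $(2u-1)$ in place of $(-1)$ in the $A_i$-coefficient, so I would cross-check the final coefficient against the one-dimensional model $\omega_{\eta,\mathbb{D}^*}$ recorded at the start of this section.

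The remaining assertions follow by inspecting \eqref{formula for omegabeta}. Near $D_i$ one has $f_i\to 0$, hence $1-f_i^{\beta_i}\to 1$ and the leading summand is asymptotic to $\beta_i^2 f_i^{\beta_i-1}A_i$, which is quasi-isometric to the model edge term $\omega_{\operatorname{cone}}$ of angle $2\pi\beta_i$ along $\{s_i=0\}$; away from $\operatorname{supp}(D_\beta)$ every summand is a smooth form, so $\Omega_\beta$ is a K\"ahler edge form with the claimed cone angles. For the lower bound, each $A_i$-term is semipositive and may be discarded, leaving $\Omega_\beta\geq\omega-\sum_i\tfrac{2\beta_i f_i^{\beta_i}}{1-f_i^{\beta_i}}\theta_i$. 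The coefficient $\tfrac{\beta_i f_i^{\beta_i}}{1-f_i^{\beta_i}}$ is increasing in $f_i$ and vanishes as $f_i\to 0$ (and is in any case bounded by the normalization $f_i<e^{-1}$), so for fixed $\beta$ replacing each $h_i$ by $\epsilon_i h_i$ with $\epsilon_i$ small shrinks $f_i$ pointwise and drives these bounded smooth terms below $\tfrac12\omega$ uniformly on $X$, yielding $\Omega_\beta\geq\tfrac12\omega$.

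I expect the substitution-and-collection of the second paragraph to be the only genuine obstacle; once the two Chern-connection identities and the wedge signs are in hand it is routine, and the passage from one smooth divisor to the simple normal crossing sum is immediate by the linearity of $\sqrt{-1}\partial\bar\partial$.
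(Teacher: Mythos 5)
Your proof is correct and takes essentially the same route as the paper's: reduce to a single smooth component by linearity of $\sqrt{-1}\partial\bar\partial$, compute the complex Hessian of the potential by the chain rule to arrive at \eqref{formula for omegabeta}, and then obtain $\Omega_\beta\geq\frac{1}{2}\omega$ by discarding the semipositive $\langle D^{1,0}s_i,D^{1,0}s_i\rangle$-terms and rescaling the $h_i$ so that the coefficients $\frac{\beta_i|s_i|^{2\beta_i}_{h_i}}{1-|s_i|^{2\beta_i}_{h_i}}$ of $\theta_i$ become small. Your packaging of the computation into the two Chern-connection identities $\partial f_i\wedge\bar\partial f_i=f_i\langle D^{1,0}s_i,D^{1,0}s_i\rangle$ and $\sqrt{-1}\partial\bar\partial f_i=\sqrt{-1}\langle D^{1,0}s_i,D^{1,0}s_i\rangle-f_i\theta_i$ is a tidier organization than the paper's direct expansion in $\partial s_i$ and $\partial h_i$, but it is the same argument, with the same (harmless) level of informality in both treatments about the uniformity in $\beta$ of the rescaling.
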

\begin{proof}
A concise proof for the case $r=1$ is given in \cite[Lemma 3.1]{guenancia2020kahler}. For the reader's convenience, we give a detailed proof here. In fact, it suffices to show \eqref{formula for omegabeta} when $r=1$. Hence, below we suppose $r=1$ and drop the subscript $i$ for simplicity.

If we set
\begin{align*}
    f(x)&=-\log \left(\frac{1-x^\beta}{\beta} \right)^2,\\
    \phi&=|s|^2_h,
\end{align*}
then $\Omega_\beta=\omega+\ddbar f\circ \phi$.  Recall there holds $\ddbar f\circ \phi=\sqrt{-1}(f^{\prime\prime}(\phi)\partial \phi\wedge \bar{\partial}\phi+f^\prime(\phi)\partial \bar{\partial}\phi)$.
We calculate 
\begin{align*}
    f^\prime&=\frac{2\beta x^{\beta-1}}{1-x^\beta},\\
    f^{\prime\prime}&=\frac{-2\beta x^{\beta-2}}{1-x^\beta}+\frac{2\beta^2 x^{\beta-2}}{(1-x^\beta)^2}.
\end{align*}
Then
\begin{align*}
    \Omega_\beta=\omega + \sqrt{-1}\cdot \frac{2\beta|s|_h^{2\beta-2}}{1-|s|_h^{2\beta}} \partial \bar{\partial} |s|_h^2+\sqrt{-1}\cdot\left(\frac{2\beta^2 |s|_h^{2\beta-4}}{(1-|s|_h^{2\beta})^2}-\frac{2\beta |s|_h^{2\beta-4}}{1-|s|_h^{2\beta}} \right) \partial |s|_h^2\wedge \bar{\partial} |s|_h^2.
\end{align*}
Note 
\begin{align}\label{eq: forms related to s norm}
    \partial \bar{\partial} |s|_h^2&=\bar{s}\partial s\wedge \bar{\partial}h+|s|^2\partial \bar{\partial} h+h\partial s\wedge \bar{\partial}\bar{s}+s\partial h\wedge \bar{\partial}\bar{s},\\
    \partial |s|_h^2\wedge \bar{\partial}|s|^2_h&=|s|^4\partial h\wedge \bar{\partial}h+sh|s|^2 \partial h\wedge \bar{\partial}\bar{s}+\bar{s}h |s|^2\partial s\wedge \bar{\partial}h+|s|^2h^2 \partial s\wedge \bar{\partial} \bar{s},
\end{align}
and the fact
\begin{align}\label{eq: D10s and curv}
    \theta&=-\ddbar \log h=\sqrt{-1}(\frac{\partial h\wedge \bar{\partial}h}{h^2}-\frac{\partial \bar{\partial}h}{h})\\
    \langle D^{1, 0}s, D^{1, 0}s\rangle&=\langle \partial s \cdot e+\frac{\partial h}{h}s\cdot e, \partial s \cdot e+\frac{\partial h}{h}s\cdot e\rangle\\
    &=h\partial s\wedge \bar{\partial}\bar{s}+\bar{s}\partial s\wedge \bar{\partial}h+s\partial h\wedge \bar{\partial}\bar{s}+\frac{|s|^2}{h}\partial h\wedge \bar{\partial}h.
\end{align}

We calculate
\begin{align*}
    \Omega_\beta&=\omega + \sqrt{-1} \frac{2\beta|s|_h^{2\beta-2}}{1-|s|_h^{2\beta}} \partial \bar{\partial} |s|_h^2+\sqrt{-1}\left(\frac{2\beta^2 |s|_h^{2\beta-4}}{(1-|s|_h^{2\beta})^2}-\frac{2\beta |s|_h^{2\beta-4}}{1-|s|_h^{2\beta}} \right) \partial |s|_h^2\wedge \bar{\partial} |s|_h^2\\
    &=\omega+ \left(\sqrt{-1} \frac{2\beta|s|_h^{2\beta-2}}{1-|s|_h^{2\beta}} |s|^2 \partial \bar{\partial}h-\sqrt{-1}\frac{2\beta |s|_h^{2\beta-4}}{1-|s|_h^{2\beta}} |s|^4 \partial h\wedge \bar{\partial} h\right)\\
    &+ \sqrt{-1} \frac{2\beta|s|_h^{2\beta-2}}{1-|s|_h^{2\beta}}(\bar{s}\partial s\wedge \bar{\partial}h+h\partial{s}\wedge \bar{\partial}\bar{s}+s\partial h\wedge \bar{\partial}\bar{s})\\
    &+\sqrt{-1}\left(\frac{2\beta^2 |s|_h^{2\beta-4}}{(1-|s|_h^{2\beta})^2}-\frac{2\beta |s|_h^{2\beta-4}}{1-|s|_h^{2\beta}} \right)(sh|s|^2 \partial h\wedge \bar{\partial}\bar{s}+\bar{s}h |s|^2\partial s\wedge \bar{\partial}h+|s|^2h^2 \partial s\wedge \bar{\partial} \bar{s})\\
    &+\sqrt{-1}\frac{2\beta^2 |s|_h^{2\beta-4}}{(1-|s|_h^{2\beta})^2} |s|^4\partial h\wedge \bar{\partial}h\\
    &=\omega-\frac{2\beta |s|_h^{2\beta}}{1-|s|_h^{2\beta}}\sqrt{-1}(\frac{\partial h\wedge \bar{\partial}h}{h^2}-\frac{\partial \bar{\partial}h}{h})\\
    &+\sqrt{-1}\frac{2\beta^2|s|^{2\beta-2}_h}{(1-|s|_h^{2\beta})^2}(h\partial s\wedge \bar{\partial}\bar{s}+\bar{s}\partial s\wedge \bar{\partial}h+s\partial h\wedge \bar{\partial}\bar{s}+\frac{|s|^2}{h}\partial h\wedge \bar{\partial}h)\\
    &=\omega+2\cdot\sqrt{-1}\frac{\beta^2|s|^{2\beta-2}_h}{(1-|s|_h^{2\beta})^2}\langle D^{1, 0}s, D^{1, 0}s\rangle-2\cdot\frac{\beta|s|_h^{2\beta}}{1-|s|_h^{2\beta}}\theta,
\end{align*}
which is what we need. Since $\displaystyle\sqrt{-1}\frac{\beta_i^2}{|s_i|_{h_i}^{2-2\beta_i}(1-|s_i|^{2\beta_i}_h)^2}\langle D^{1, 0}s_i, D^{1, 0}s_i \rangle$ contributes as a non-negative $(1, 1)$-form for each $i$, we will show that up to rescaling $h_i$, $\displaystyle\frac{\beta_i|s_i|^{2\beta_i}_{h_i}}{1-|s_i|^{2\beta_i}_{h_i}}$ can be made arbitrarily small to conclude that $\Omega_\beta\geq \frac{1}{2}\omega$. To see this, consider the function $\displaystyle f_{\beta_i}(t):=\frac{\beta_i t^{\beta_i}}{1-t^{\beta_i}}$. $f_{\beta_i}(t)$ is increasing in $(0, 1)$ and satisfies $f_{\beta_i}(0)=0$.
Hence for any $\delta>0$, $\exists t_\delta\in(0, 1)$, such that for $t\in (0, t_\delta]$, $f_{\beta_i}(t)\leq \delta$ for each $i=1,\dots, r$. Now take $\displaystyle \delta=\frac{1}{4r\cdot \sup_{X, i} \operatorname{tr}_\omega \theta_i}$ and rescale each $h_i$ such that $|s_i|^2_{h_i}\leq t_\delta$. Then
\begin{align*}
    2\cdot \sum_{i=1}^r -\frac{\beta_i|s_i|^{2\beta_i}_{h_i}}{1-|s_i|^{2\beta_i}_{h_i}}\theta_i\geq -\frac{1}{2}\omega
\end{align*}
and therefore $\Omega_\beta\geq \frac{1}{2}\omega$.
\end{proof}

When $r=1$ and $\beta=\beta_1\in(0, \frac{1}{2}]$, the following result of Guenancia states that the reference metric $\Omega_\beta$ has uniformly bounded holomorphic bisectional curvatures on $X\setminus D_\beta$.
\begin{lem}\textup{\cite[Theorem 3.2]{guenancia2020kahler}}\label{erchong}
When $r=1$, there exists a constant $C>0$ depending only on $X$ such that for all $\beta\in(0, \frac{1}{2}]$, the holomorphic bisectional curvature of $\Omega_\beta$ is bounded by $C$.
\end{lem}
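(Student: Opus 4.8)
The plan is to bound the holomorphic bisectional curvature by pulling $\Omega_\beta$ back to a fixed $\beta$-independent model via suitable quasi-coordinates, so that no singular or $\beta$-degenerate term survives. Since $r=1$ we drop the index and write $\beta=\beta_1$. The holomorphic bisectional curvature is invariant under biholomorphisms and is a universal expression in the metric coefficients, their inverse, and their first two derivatives; hence it suffices to produce, around every point of $X\setminus D$, a holomorphic change of coordinates in which $\Omega_\beta$ is uniformly comparable to the Euclidean metric and has coefficients with $C^2$-norms bounded independently of $\beta\in(0,\tfrac12]$ and of the base point. First I would localize: fix $p\in D$, a chart $(U;z_1,\dots,z_n)$ with $D=\{z_1=0\}$, and a trivialization of $L_D$ in which $|s|_h^2=|z_1|^2 e^{-\varphi}$ for a smooth real $\varphi$ with $\sqrt{-1}\partial\bar\partial\varphi=\theta$. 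By compactness of $D$ finitely many such charts suffice, and on the complement of a neighborhood of $D$ the family $\Omega_\beta$ has uniformly bounded geometry (it converges in $C^\infty_{\operatorname{loc}}$ to the smooth-away-from-$D$ cusp reference metric), so only a fixed chart $U$ around each $p$ requires attention.

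Using the explicit expression \eqref{formula for omegabeta}, $\Omega_\beta$ splits as a leading \emph{model} part --- the one obtained by setting $\varphi\equiv 0$, which in the coordinate $z_1$ is the product of the one-dimensional metric $\omega_{\beta,\mathbb{D}^*}$ with the flat metric in $z_2,\dots,z_n$ --- plus a perturbation carrying all the $\varphi$- (equivalently $\theta$-) dependence and the background $\omega$. The key device is the $\beta$-dependent substitution $\tilde\zeta=\beta\log z_1$ in the normal direction, keeping $z_2,\dots,z_n$ unchanged. A direct computation shows that under this substitution the model part becomes the \emph{$\beta$-independent} metric
\begin{align*}
    \frac{1}{4\sinh^2(\operatorname{Re}\tilde\zeta)}\,\sqrt{-1}\,d\tilde\zeta\wedge d\bar{\tilde\zeta}+\sum_{j=2}^n \sqrt{-1}\,dz_j\wedge d\bar z_j,
\end{align*}
that is, the product of a complete constant-curvature metric on the half-plane $\{\operatorname{Re}\tilde\zeta<0\}$ with the flat metric. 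Crucially, the full range $\operatorname{Re}\tilde\zeta\in(-\infty,0)$ corresponds exactly to $|z_1|^{2\beta}\in(0,1)$, so this single $\beta$-independent picture covers both the edge-like regime and the cuspidal/cylindrical limit $\beta\to 0$ at once. Being a product of a constant-curvature Riemann surface and flat space, the model has holomorphic bisectional curvature taking only the values $0$ and the constant curvature of the surface factor, hence bounded.

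Next I would build the quasi-coordinates. The half-plane factor, being the universal cover of the Poincaré punctured disc, is homogeneous, so its isometry group acts transitively; composing these isometries with a fixed holomorphic chart yields, around every point and uniformly in $\operatorname{Re}\tilde\zeta$, biholomorphisms $\Phi$ from a fixed ball in which the model is uniformly Euclidean with all derivatives bounded. It then remains to control the perturbation in these metric-adapted coordinates. Every perturbation term carries either the scalar factor $\tfrac{\beta|s|_h^{2\beta}}{1-|s|_h^{2\beta}}$ (multiplying $\theta$) or, through the coupling term $\langle D^{1,0}s,D^{1,0}s\rangle$, the transverse conformal factor $\tfrac{\beta^2|s|_h^{2\beta-2}}{(1-|s|_h^{2\beta})^2}$. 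After the rescaling of $h$ from Lemma \ref{shouchong} the former is uniformly small over $\beta\in(0,\tfrac12]$, while in the $\tilde\zeta$-variable the latter differs from the model conformal factor only by the smooth factor $e^{-\beta\varphi}$, which tends to $1$ as $\beta\to 0$. Granting that both the $\theta$-contribution and these $e^{-\beta\varphi}$-corrections, together with the mixed $\tilde\zeta$-transverse components produced by $\langle D^{1,0}s,D^{1,0}s\rangle$, stay uniformly $C^2$-small, the pullback $\Phi^*\Omega_\beta$ is a uniformly bounded $C^2$-perturbation of a fixed metric of bounded curvature; its bisectional curvature is therefore bounded independently of $\beta$ and the base point, and by biholomorphism invariance the same bound holds for $\Omega_\beta$.

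The main obstacle is precisely the last step: obtaining the uniform $C^2$-control of the perturbation \emph{simultaneously} as $\beta\to 0$ and over the whole range $\operatorname{Re}\tilde\zeta\in(-\infty,0)$, i.e. across the edge-to-cusp transition. The rescaling $\tilde\zeta=\beta\log z_1$ is exactly what removes the apparent $\beta$-degeneration of the leading metric, but one must still check that the mixed normal--transverse components generated by $\langle D^{1,0}s,D^{1,0}s\rangle$, whose $\tilde\zeta$-derivatives a priori produce negative powers of $\beta$, do not destroy the uniform comparison once measured in the correctly rescaled quasi-coordinates. As an alternative I could bound the curvature by a direct computation from \eqref{formula for omegabeta}; there the same obstacle reappears as the need to verify that every term with a singular denominator $(1-|s|_h^{2\beta})^{-k}$ or a negative power of $|z_1|$ cancels or recombines into a bounded expression uniformly in $\beta$.
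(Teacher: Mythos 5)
Your reduction of the model part is correct and is the nicest part of the proposal: under $\tilde\zeta=\beta\log z_1$ the conical factor does become the $\beta$-independent metric $\frac{1}{4\sinh^2(\operatorname{Re}\tilde\zeta)}\sqrt{-1}\,d\tilde\zeta\wedge d\bar{\tilde\zeta}$, of constant negative curvature. But the argument stops exactly where the content of Lemma \ref{erchong} lies, and the two devices you invoke to finish are, respectively, missing and false. First, the uniform $C^2$-control of the perturbation is not an auxiliary verification to be ``granted'': it \emph{is} the estimate, and the threshold $\beta\le\frac{1}{2}$ in the statement --- which any correct proof must use --- never enters your argument. Concretely, the mixed components of $\langle D^{1,0}s,D^{1,0}s\rangle$ carry factors like $|z_1|^{2\beta-1}$, and boundedness hinges on the sign of such exponents; worse, your logarithmic coordinate aggravates this, since $\partial_{\tilde\zeta}=\beta^{-1}z_1\partial_{z_1}$, so every $\tilde\zeta$-derivative of the smooth data $\varphi,\theta$ produces a factor $z_1/\beta$, and on the chart one only knows $|z_1|\le C<1$, not $|z_1|\lesssim\beta$. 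Second, the claim that ``the half-plane factor is homogeneous, so its isometry group acts transitively'' is wrong: that surface is the universal cover of the punctured Poincar\'e disc, i.e.\ an infinite-angle hyperbolic cone. Every isometry extends to the metric completion and must fix the apex (the distance to the apex is an isometry invariant), so the isometry group is the one-parameter family of rotations about the apex (vertical translations in $\tilde\zeta$) and is nowhere near transitive. Hence there are no uniform-scale quasi-coordinates coming from homogeneity; charts must be taken at scales proportional to the distance to the singularity, which is precisely what a cone-type coordinate change accomplishes.

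For comparison, the paper proves the lemma by the ``alternative'' you mention and dismiss at the end: a direct computation in the spirit of \cite[Lemma 2.3]{JMR}. One fixes $p$, normalizes $h=e^{-\phi}$ with $\phi(p)=0$, $d\phi(p)=0$, writes out all coefficients of $\Omega_\beta$ from \eqref{formula for omegabeta}, and passes to the cone coordinate $\xi=z_1^{\beta}/\beta$ rather than the log coordinate. This choice is what resolves the obstacle you flag: since $d\xi=z_1^{\beta-1}dz_1$, derivatives in $\xi$ hit the smooth data with factors $z_1^{1-\beta}$ (bounded), the coefficients satisfy $g_{i\bar j}=O(1)$, and their first and second derivatives are $O(|\xi|^{2/\beta-3})$ and $O(|\xi|^{2/\beta-4})$; both exponents are nonnegative exactly when $\beta\le\frac{1}{2}$, and the formula $R_{i\bar jk\bar\ell}=-g_{i\bar j,k\bar\ell}+g^{s\bar t}g_{i\bar t,k}g_{s\bar j,\ell}$ then yields the bound. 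No splitting into model plus perturbation and no homogeneity argument is needed. To salvage your route you would have to replace the homogeneity step by explicit scale-adapted charts and then actually carry out the $C^2$ estimates of the $\varphi$-dependent terms; at that point you would essentially have reproduced the paper's cone-coordinate power counting.
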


\begin{proof}
For the reader's convenience, we give an alternative proof for the lemma following \cite[Lemma 2.3]{JMR}. Let us assume $r=1$ and drop all the subscripts of $\beta$, $D$, $s$, $h$ and $\theta$. We will show that the curvature tensor of $\Omega_\beta$ is uniformly bounded on $X\setminus D$. Fix a point $p\in X\setminus D$. We can find local holomorphic coordinates such that $s=z_1$ and the hermitian metric $h$ on $L_D$ is given by $h=e^{-\phi}$ while $\phi(p)=0$ and $d\phi(p)=0$. In these local coordinates, write
\begin{align*}
    \omega&=\sqrt{-1}\tilde{g}_{i\bar{j}}dz^i\wedge d\bar{z}^j,\\
    \theta&=\sqrt{-1}\theta_{i\bar{j}}dz^i\wedge d\bar{z}^j,
\end{align*}
where $\tilde{g}_{i\bar{j}}$ and $\theta_{i\bar{j}}$ are smooth functions. Moreover, we have
\begin{align*}
    \langle D^{1, 0}s, D^{1, 0}s\rangle&=\langle dz_1\cdot e-z_1\partial \phi \cdot e, dz_1 \cdot e-z_1\partial \phi \cdot e \rangle\\
    &=e^{-\phi}(1-\bar{z}_1\frac{\partial \phi}{\partial \bar{z}_1}-{z}_1\frac{\partial \phi}{\partial {z}_1}+|z_1|^2\frac{\partial \phi}{\partial z_1}\frac{\partial \phi}{\partial \bar{z}_1}) dz^1\wedge d\bar{z}^1\\
    &+\sum_{i=2}^n e^{-\phi}(-\bar{z}_1\frac{\partial \phi}{\partial \bar{z}_i}+|z_1|^2\frac{\partial \phi}{\partial z_1}\frac{\partial \phi}{\partial \bar{z}_i}) dz^1\wedge d\bar{z}^i\\
    &+\sum_{j=2}^n e^{-\phi}(-{z}_1\frac{\partial \phi}{\partial {z}^j}+|z_1|^2\frac{\partial \phi}{\partial z_j}\frac{\partial \phi}{\partial \bar{z}_1}) dz^j\wedge d\bar{z}^1\\
    &+\sum_{k, \ell=2}^n e^{-\phi} |z_1|^2\frac{\partial \phi}{\partial z_k}\frac{\partial \phi}{\partial \bar{z}_\ell} dz^k\wedge d\bar{z}^\ell.
\end{align*}
Hence, writing $\Omega_\beta=\sqrt{-1}g_{i\bar{j}}dz^i\wedge d\bar{z}^j$ we have
\begin{align*}
    g&=\tilde{g}_{i\bar{j}}-\frac{\beta |s|_h^{2\beta}}{1-|s|_h^{2\beta}}\theta_{i\bar{j}}+\frac{\beta^2 |s|^{2\beta-2}_h}{(1-|s|_h^{2\beta})^2} \langle D^{1, 0}s, D^{1, 0}s \rangle\\
    &=\tilde{g}_{i\bar{j}}-\frac{\beta e^{-\beta \phi} |z_1|^{2\beta}}{1-|z_1|^{2\beta}\cdot e^{-\beta\phi}}\theta_{i\bar{j}}+\frac{\beta^2 e^{-(\beta-1)\phi} |z_1|^{2\beta-2}}{(1-|z_1|^{2\beta}e^{-\beta\phi})^2} \langle D^{1, 0}s, D^{1, 0}s \rangle.
\end{align*}
For each components, we have
\begin{align*}
    g_{1\bar{1}}&=\tilde{g}_{1\bar{1}}-\frac{\beta e^{-\beta \phi} |z_1|^{2\beta}}{1-|z_1|^{2\beta}\cdot e^{-\beta\phi}}\theta_{1\bar{1}}+\frac{\beta^2 e^{-(\beta-1)\phi} |z_1|^{2\beta-2}}{(1-|z_1|^{2\beta}e^{-\beta\phi})^2} e^{-\phi}(1-\bar{z}_1\phi_{\bar{z}_1}-z_1\phi_{z_1}+|z_1|^2\phi_{z_1}\phi_{\bar{z}_1}),\\
    g_{1\bar{j}}&=\tilde{g}_{1\bar{j}}-\frac{\beta e^{-\beta \phi} |z_1|^{2\beta}}{1-|z_1|^{2\beta}\cdot e^{-\beta\phi}}\theta_{1\bar{j}}+\frac{\beta^2 e^{-(\beta-1)\phi} |z_1|^{2\beta-2}}{(1-|z_1|^{2\beta}e^{-\beta\phi})^2} e^{-\phi}(-\bar{z}_1\phi_{\bar{z}_j}+|z_1|^2\phi_{z_1}\phi_{\bar{z}_j}),\;j=2,\dots, n,\\
    g_{k\bar{\ell}}&=\tilde{g}_{k\bar{\ell}}-\frac{\beta e^{-\beta \phi} |z_1|^{2\beta}}{1-|z_1|^{2\beta}\cdot e^{-\beta\phi}}\theta_{k\bar{\ell}}+\frac{\beta^2 e^{-(\beta-1)\phi} |z_1|^{2\beta-2}}{(1-|z_1|^{2\beta}e^{-\beta\phi})^2} e^{-\phi}|z_1|^2\phi_{z_k}\phi_{\bar{z}_\ell},\;k, \ell=2,\dots, n.
\end{align*}
Consider a change of coordinate $\xi=z_1^\beta/\beta$, then we have
\begin{equation*}
    dz_1\wedge d\bar{z}_1=|\beta \xi|^{\frac{2}{\beta}-2}d\xi\wedge d\bar{\xi}.
\end{equation*}
Moreover, in coordinates $(\xi, z_2,\dots, z_n)$, $g_{i\bar{j}}=O(|\xi|^{\frac{2}{\beta}-2})=O(1)$ for $i, j=1,\dots, n$. Since the curvature tensor is given by 
\begin{equation*}
    R_{i\bar{j}k\bar{\ell}}=-g_{i\bar{j}, k\bar{\ell}}+g^{s\bar{t}}g_{i\bar{t}, k}g_{s\bar{j}, \ell},
\end{equation*}
it suffices to show $g_{i\bar{j}, k}$ and $g_{i\bar{j}, k\bar{\ell}}$ are bounded to finish the proof. Indeed, we have 
\begin{align*}
    g_{i\bar{j}, k}&=O(|\xi|^{\frac{2}{\beta}}-3),\\
    g_{i\bar{j}, k\bar{\ell}}&=O(|\xi|^{\frac{2}{\beta}}-4).
\end{align*}
When $0<\beta\leq\frac{1}{2}$, the curvature tensor is bounded.
\end{proof}

\begin{rmk}
\textup{As pointed out to the author by H. Guenancia, J. Sturm's trick (see \cite[p. 62]{rubinstein2014smooth}) can also be applied to simplify the proof of the curvature bounds.}
\end{rmk}

\subsection{A priori estimates}
By Theorem \ref{existence of KEE}, there exists a unique \KE\;crossing edge metric on $X$ with cone angle $2\pi \beta_i$ along each $D_i$, denoted by $\omega_{\phi_\beta}=\omega+\sqrt{-1}\partial \bar{\partial} \phi_\beta$ such that 
\begin{align}\label{MA eq}
    \left\{
    \begin{array}{ll}
     \displaystyle(\omega+\sqrt{-1}\partial \bar{\partial} \phi_\beta)^n    =\frac{e^{f+\phi_\beta} \omega^n}{\Pi_{i=1}^r |s_i|_{h_i}^{2(1-\beta_i)}},  \\
      \displaystyle\omega_{\phi_\beta}=\omega+\sqrt{-1}\partial \bar{\partial} \phi_\beta>0,  
    \end{array}
    \right.
\end{align}
where $f\in C^\infty(X)$. In this section, we establish a Laplacian estimate for $\omega_{\phi_\beta}$ with respect to the reference metric $\Omega_\beta$ by proving the following result.

\begin{thm}\label{compare omegaphi and Omegabeta}
For $\beta=(\beta_1,\dots, \beta_r)\in(0, \frac{1}{2}]^r$, there exists a constant $C>0$ (independent of $\beta_1,\dots, \beta_r$) such that 
\begin{align}\label{bound for kee metric wrt ref metric}
    C^{-1}\Omega_\beta\leq \omega_{\phi_\beta}\leq C\Omega_\beta,
\end{align}
on $X\setminus \operatorname{supp}(D_\beta)$.
\end{thm}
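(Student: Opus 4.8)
The plan is to prove the \emph{lower} bound $\Omega_\beta\le C\omega_{\phi_\beta}$ by a uniform trace (Laplacian) estimate, and then to upgrade it to the full two-sided bound using the Monge--Amp\`ere equation \eqref{MA eq}. Two features make a direct argument awkward: the holomorphic bisectional curvature bound of Lemma \ref{erchong} is only available for a \emph{single} smooth component, while the full reference $\Omega_\beta$ involves $r$ components; and $\omega_{\phi_\beta}$ is genuinely singular along every $D_j$, so a maximum-principle argument on $X\setminus\operatorname{supp}(D_\beta)$ is delicate. I would dispose of the first issue by comparing with single-component reference metrics, and the second by the regularization technique of Datar--Song \cite{Datar-Song}.

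\emph{Reduction to single components.} For each $i$ set
\[
\Omega_{\beta_i}:=\omega-\ddbar\log\left[\frac{1-|s_i|^{2\beta_i}_{h_i}}{\beta_i}\right]^2,
\]
a \K\ edge metric with a single cone along $D_i$ (so Lemma \ref{shouchong} and Lemma \ref{erchong} apply to it), and note $\Omega_\beta-\omega=\sum_{i=1}^r(\Omega_{\beta_i}-\omega)$. Taking traces with respect to $\omega_{\phi_\beta}$ and using $\operatorname{tr}_{\omega_{\phi_\beta}}\omega>0$ together with $r\ge1$ gives
\[
\operatorname{tr}_{\omega_{\phi_\beta}}\Omega_\beta=(1-r)\operatorname{tr}_{\omega_{\phi_\beta}}\omega+\sum_{i=1}^r\operatorname{tr}_{\omega_{\phi_\beta}}\Omega_{\beta_i}\le\sum_{i=1}^r\operatorname{tr}_{\omega_{\phi_\beta}}\Omega_{\beta_i}.
\]
Hence it suffices to bound each $\operatorname{tr}_{\omega_{\phi_\beta}}\Omega_{\beta_i}$ from above by a constant uniform in $\beta\in(0,\tfrac12]^r$, since a bound on $\operatorname{tr}_{\omega_{\phi_\beta}}\Omega_\beta$ yields $\Omega_\beta\le C\omega_{\phi_\beta}$.

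\emph{Chern--Lu on regularized single-component metrics.} Fix $i$. Following \cite{Datar-Song}, I would approximate $\omega_{\phi_\beta}$ by a family $\omega_{\phi_{\beta,\epsilon}}$ of negatively curved \KE-type metrics that keep the cone angle $2\pi\beta_i$ along $D_i$ but regularize the conical behavior along the other components $D_j$, $j\ne i$, solving a smoothed version of \eqref{MA eq} with $\phi_{\beta,\epsilon}\to\phi_\beta$ in $C^\infty_{\operatorname{loc}}(X\setminus\operatorname{supp}(D_\beta))$ and uniformly. On each approximant $\Ric\,\omega_{\phi_{\beta,\epsilon}}\ge-\omega_{\phi_{\beta,\epsilon}}$ up to an $o(1)$ error, while the target $\Omega_{\beta_i}$ has holomorphic bisectional curvature bounded above by a constant $B$ \emph{independent of} $\beta_i\in(0,\tfrac12]$ by Lemma \ref{erchong}. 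Applying the Chern--Lu inequality to $\mathrm{id}\colon(X,\omega_{\phi_{\beta,\epsilon}})\to(X,\Omega_{\beta_i})$ gives, with $u:=\operatorname{tr}_{\omega_{\phi_{\beta,\epsilon}}}\Omega_{\beta_i}$,
\[
\Delta_{\omega_{\phi_{\beta,\epsilon}}}\log u\ge -C_1-B\,u.
\]
Writing $\omega_{\phi_{\beta,\epsilon}}-\Omega_{\beta_i}=\ddbar\psi_i$, so that $\Delta_{\omega_{\phi_{\beta,\epsilon}}}\psi_i=n-u$, the maximum principle applied to $\log u-(B+1)\psi_i$ forces $u\le C$ at its maximum; combined with a uniform oscillation bound on $\psi_i$ (via Guenancia's potential estimates, to be checked $\beta$-independent), this yields $\operatorname{tr}_{\omega_{\phi_{\beta,\epsilon}}}\Omega_{\beta_i}\le C$ uniformly. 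Letting $\epsilon\to0$ and summing over $i$ gives $\operatorname{tr}_{\omega_{\phi_\beta}}\Omega_\beta\le C$, hence $\Omega_\beta\le C\omega_{\phi_\beta}$ on $X\setminus\operatorname{supp}(D_\beta)$.

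\emph{From one-sided to two-sided.} Let $\mu_1,\dots,\mu_n$ denote the eigenvalues of $\omega_{\phi_\beta}$ relative to $\Omega_\beta$. The estimate above bounds $\sum_k\mu_k^{-1}=\operatorname{tr}_{\omega_{\phi_\beta}}\Omega_\beta$, while \eqref{MA eq}, the formula \eqref{mod metric snc case} for $\Omega_\beta^n$, and a uniform two-sided bound for the volume ratio $\omega_{\phi_\beta}^n/\Omega_\beta^n=e^{F}$ (which requires the uniform $C^0$ estimate on $\phi_\beta$) control $\prod_k\mu_k=e^{F}$. From $\mu_k=e^{F}\prod_{l\ne k}\mu_l^{-1}\le e^{F}(\operatorname{tr}_{\omega_{\phi_\beta}}\Omega_\beta)^{n-1}$ one obtains a uniform upper bound on each $\mu_k$, i.e.\ $\omega_{\phi_\beta}\le C\Omega_\beta$, which completes the proof. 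The main obstacle I anticipate is the \emph{uniformity of all constants as the $\beta_i\to0$}: one must set up the Datar--Song regularization so that both the Ricci lower bound and the oscillation control of $\psi_i$ survive with $\beta$-independent constants, and justify the maximum principle on the singular space (attainment of the maximum, or a barrier near the components $D_j$, $j\ne i$) together with the uniform $C^0$ and volume-ratio estimates underlying the last step.
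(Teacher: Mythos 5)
Your proposal is, in substance, the paper's own proof: the paper likewise reduces to single-component reference metrics (it uses $\Omega_{\beta_i}:=\tfrac{1}{r}\omega+\ddbar\psi_{\beta_i}$, where $\psi_{\beta_i}:=-\log\bigl[(1-|s_i|^{2\beta_i}_{h_i})/\beta_i\bigr]^2$, so that $\sum_i\Omega_{\beta_i}=\Omega_\beta$, in place of your $(1-r)\operatorname{tr}_{\omega_{\phi_\beta}}\omega\le 0$ observation), approximates $\omega_{\phi_\beta}$ by \KE\;edge metrics $\omega_{i,\ell}$ with cone only along $D_i$ (Demailly regularization of $\log\prod_{j\ne i}|s_j|_{h_j}^{2(1-\beta_j)}$ in \eqref{Family of MAs}, with $C^0$-convergence by Kolodziej's stability theorem and $C^\infty_{\operatorname{loc}}$-convergence by Evans--Krylov), runs Chern--Lu toward $\Omega_{\beta_i}$ using Lemma \ref{erchong} together with a maximum principle corrected by the relative potential and the barrier $\epsilon\log|s_i|^2_{h_i}$, and finally upgrades the one-sided bound through the volume-form comparison. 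So the route is the same; the differences are organizational.

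Two places in your write-up need repair, and both are exactly where the paper does the work you deferred. The more serious one is your last step: you propose to control the volume ratio $\omega_{\phi_\beta}^n/\Omega_\beta^n$ by ``the uniform $C^0$ estimate on $\phi_\beta$''. No such estimate holds uniformly in $\beta$: as $\beta\to 0$, $\phi_\beta$ converges to the potential of the cusp-type \KE\;metric, which behaves like $-\sum_i\log\log^2|s_i|^2_{h_i}$ near $D$ and is unbounded. Since
\begin{align*}
\frac{\omega_{\phi_\beta}^n}{\Omega_\beta^n}=e^{O(1)}\exp\Bigl(f+\phi_\beta-\sum_{i=1}^r\psi_{\beta_i}\Bigr),
\end{align*}
what actually controls the ratio is a uniform bound on the \emph{relative} potential $\phi_\beta-\sum_i\psi_{\beta_i}$ --- the same kind of oscillation bound your Chern--Lu step already requires; the paper obtains it at the level of the approximants in Lemma \ref{bound for tilde phi} and invokes \cite{Datar-Song} for the uniform equivalence of the volume forms in the final step. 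Relatedly, the $\beta$-independence of that oscillation bound is not a routine check to be deferred: it rests on the determinant estimate behind \eqref{Bd for Fil}, namely $\Omega_{\beta_i}^n=e^{O(1)}\,\beta_i^2\,|s_i|_{h_i}^{-2(1-\beta_i)}(1-|s_i|^{2\beta_i}_{h_i})^{-2}\,\omega^n$ with $O(1)$ uniform for $\beta_i\in(0,\tfrac12]$, and this computation is the technical heart of the theorem. Finally, two minor slips: on your approximants the cone persists along $D_i$ and is smoothed along the $D_j$, $j\ne i$, so the barrier ensuring attainment of the maximum is needed near $D_i$ (the paper's $+\epsilon\log|s_i|^2_{h_i}$), not near the $D_j$; and the Ricci bound of the approximants is not $-\omega_{\phi_{\beta,\epsilon}}$ up to $o(1)$ but carries a twist of size $O(1)\cdot\Omega_{\beta_i}$, which is harmless because Chern--Lu tolerates $\operatorname{Ric}\ge -C_1\omega_1-C_2\omega_2$, so your differential inequality keeps the correct form.
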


Define 
\begin{align*}
    \psi_{\beta_i}:=-\log\left[ \frac{1-|s_i|_{h_i}^{2\beta_i}}{\beta_i}\right]^2,
\end{align*}
then 
\begin{align*}
    \Omega_\beta=\omega+\sum_{i=1}^r \ddbar \psi_{\beta_i}.
\end{align*}
Consider the \K\;edge metric with cone singularity along a smooth divisor:
\begin{align}\label{Defi of Omegabetai}
    \Omega_{\beta_i}:=\frac{1}{r}\omega+\ddbar \psi_{\beta_i}, \quad i=1,\dots, r.
\end{align}

\begin{prop}\label{comp omegaphi and Omegabetai}
Assume $\beta\in(0, \frac{1}{2}]^r$. There exists a uniform constant $C'>0$ such that for any $i=1,\dots, r$, 
\begin{align}
    \omega_{\phi_\beta}\geq C'\Omega_{\beta_i},\label{omegaphi g Omegai}
\end{align}
on $X\setminus \operatorname{supp}(D_i)$.
\end{prop}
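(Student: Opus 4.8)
The plan is to bound $\operatorname{tr}_{\omega_{\phi_\beta}}\Omega_{\beta_i}$ from above by a constant that is uniform in $\beta$, since a bound $\operatorname{tr}_{\omega_{\phi_\beta}}\Omega_{\beta_i}\leq C$ forces every eigenvalue of $\Omega_{\beta_i}$ with respect to $\omega_{\phi_\beta}$ to be $\leq C$, which is exactly the desired lower bound $\omega_{\phi_\beta}\geq C^{-1}\Omega_{\beta_i}$. The natural tool is the Chern--Lu inequality for the identity map $(X\setminus\operatorname{supp}(D_\beta),\omega_{\phi_\beta})\to(X\setminus\operatorname{supp}(D_i),\Omega_{\beta_i})$, and this is precisely why we compare against the single-component model $\Omega_{\beta_i}$ rather than $\Omega_\beta$: by Lemma \ref{erchong}, applied with $\frac1r\omega$ in place of $\omega$ (a harmless rescaling in that proof), the holomorphic bisectional curvature of $\Omega_{\beta_i}$ is bounded above by a constant depending only on $X$, uniformly for $\beta_i\in(0,\frac12]$. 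Away from $\operatorname{supp}(D_\beta)$ the \KE\;equation \eqref{MA eq} gives $\Ric\omega_{\phi_\beta}=-\omega_{\phi_\beta}$, so the domain has a uniform Ricci lower bound, and feeding these two facts into Chern--Lu yields a differential inequality of the shape
\[
\Delta_{\omega_{\phi_\beta}}\log\operatorname{tr}_{\omega_{\phi_\beta}}\Omega_{\beta_i}\geq -C_1-C_2\operatorname{tr}_{\omega_{\phi_\beta}}\Omega_{\beta_i}
\]
with $C_1,C_2$ independent of $\beta$.

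The difficulty is that $\omega_{\phi_\beta}$ is singular along \emph{every} component $D_j$, and in particular along the crossings $D_i\cap D_j$, where the maximum principle is not transparent. To avoid confronting the crossing singularities, I would first reduce to the single-divisor situation by a Datar--Song type regularization: replace the weights $|s_j|_{h_j}^{2(1-\beta_j)}$ for $j\neq i$ in \eqref{MA eq} by the smooth approximants $(|s_j|_{h_j}^2+\epsilon)^{1-\beta_j}$, and let $\omega_{\phi_{\beta,\epsilon}}$ solve the resulting Monge--Amp\`ere equation. Each $\omega_{\phi_{\beta,\epsilon}}$ is a \K\;edge metric with cone singularity only along the smooth divisor $D_i$ and smooth across the other components, so the whole argument now takes place in the geometry of a single smooth divisor. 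The key point is that $\ddbar\log(|s_j|_{h_j}^2+\epsilon)\geq-C\omega$ uniformly in $\epsilon$, since (as in the computation underlying Lemma \ref{shouchong}) the $D^{1,0}s_j$ contributions assemble into a nonnegative form and only the curvature term $-\tfrac{|s_j|^2_{h_j}}{|s_j|^2_{h_j}+\epsilon}\theta_j\geq-\theta_j$ survives. Consequently $\Ric\omega_{\phi_{\beta,\epsilon}}\geq-\omega_{\phi_{\beta,\epsilon}}-C\omega$ away from $D_i$, and the Chern--Lu inequality above persists with the same uniform constants. I would also record a uniform $C^0$ estimate $\|\phi_{\beta,\epsilon}\|_{L^\infty}\leq C$ and the convergence $\omega_{\phi_{\beta,\epsilon}}\to\omega_{\phi_\beta}$ in $C^\infty_{\operatorname{loc}}(X\setminus\operatorname{supp}(D_\beta))$ as $\epsilon\to0$.

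With the differential inequality in hand, I would run a maximum-principle argument on $X\setminus D_i$. Writing $v_\epsilon:=\operatorname{tr}_{\omega_{\phi_{\beta,\epsilon}}}\Omega_{\beta_i}$ and using that the combination $\rho_\epsilon:=\psi_{\beta_i}-\frac1r\phi_{\beta,\epsilon}$ satisfies $\Delta_{\omega_{\phi_{\beta,\epsilon}}}\rho_\epsilon=v_\epsilon-\frac nr$, the test function $H_\epsilon:=\log v_\epsilon+A\rho_\epsilon$ with $A$ large converts Chern--Lu into $\Delta_{\omega_{\phi_{\beta,\epsilon}}}H_\epsilon\geq\delta v_\epsilon-C$ for some $\delta>0$, so that an interior maximum of $H_\epsilon$ bounds $v_\epsilon$ there. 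Propagating this through the oscillation of $H_\epsilon$, and then letting $\epsilon\to0$ by the $C^\infty_{\operatorname{loc}}$ convergence (the bound extends across $D_j$, $j\neq i$, where $\Omega_{\beta_i}$ is smooth and $\omega_{\phi_\beta}$ only larger), would give $\operatorname{tr}_{\omega_{\phi_\beta}}\Omega_{\beta_i}\leq C$ on $X\setminus\operatorname{supp}(D_i)$ and hence \eqref{omegaphi g Omegai}.

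The main obstacle I anticipate is the uniformity of $C'$ as cone angles approach $0$. The barrier $\psi_{\beta_i}$ is normalized in \eqref{Defi of Omegabetai} so as to converge to the \KE\;cusp potential, but its oscillation on $X\setminus D_i$ still blows up like $|\log\beta_i|$ near $D_i$; estimating $\log v_\epsilon$ naively through $\sup H_\epsilon-\inf(A\rho_\epsilon)$ would therefore produce a $\beta$-dependent, in fact divergent, constant. The resolution is to treat the region near $D_i$ separately, where $\omega_{\phi_{\beta,\epsilon}}$ and $\Omega_{\beta_i}$ are both quasi-isometric to the \emph{same} model edge metric with constants controlled uniformly in $\beta_i\in(0,\frac12]$ — this is exactly where the uniform bisectional curvature bound of Lemma \ref{erchong} and the single-divisor edge asymptotics behind Theorem \ref{G Thm A} enter. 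Thus $v_\epsilon$ is bounded near $D_i$ independently of $\beta$, and the maximum of $H_\epsilon$ is forced into a region where the barrier's oscillation is genuinely uniform. Reconciling the near-divisor regime with the interior estimate so as to extract a single $\beta$-independent constant, while checking that the regularizing twist terms never spoil either, is the delicate heart of the proof.
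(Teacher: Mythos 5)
Your skeleton---regularize the components $D_j$, $j\neq i$, so as to reduce to a single smooth divisor, then run Chern--Lu against $\Omega_{\beta_i}$ using Lemma \ref{erchong} and close with a maximum principle---is exactly the paper's strategy (the paper regularizes $f_i=\log\Pi_{j\neq i}|s_j|_{h_j}^{2(1-\beta_j)}$ by Demailly approximants $F_{i,\ell}$ rather than by $(|s_j|_{h_j}^2+\epsilon)^{1-\beta_j}$, an immaterial difference). The genuine gap is in the uniformity step, and the two facts you lean on there are, respectively, false and circular. First, the recorded bound $\|\phi_{\beta,\epsilon}\|_{L^\infty}\leq C$ cannot hold with $C$ independent of $\beta$: near $D_i$ the potential tracks $\psi_{\beta_i}$, whose range is roughly $(2\log\beta_i,\,0]$, so $\inf_X\phi_{\beta,\epsilon}\to-\infty$ as $\beta_i\to 0$ (consistently with the fact that the limit is a cusp-type potential, which is unbounded below). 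Second, your fix for the divergent oscillation of $\rho_\epsilon=\psi_{\beta_i}-\frac1r\phi_{\beta,\epsilon}$---namely that near $D_i$ the metric $\omega_{\phi_{\beta,\epsilon}}$ is quasi-isometric to the model edge metric with constants uniform in $\beta$---is precisely a localized form of the conclusion \eqref{omegaphi g Omegai} you are trying to prove. Theorem \ref{G Thm A} does not supply it as a black box: the approximating metrics solve \emph{twisted} equations to which Guenancia's theorem does not directly apply, and its proof is exactly the Chern--Lu argument you are in the middle of setting up. So the maximum principle, as you have arranged it, cannot be closed with a $\beta$-independent constant.

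The missing idea (the paper's Step 2) is to work with the relative potential $\tilde\phi_{i,\ell}:=\phi_{i,\ell}-\psi_{\beta_i}$, with coefficient one on both terms so that the $|\log\beta_i|$-size behavior near $D_i$ cancels, and to prove the uniform bound $\sup_{X\setminus D_i}|\tilde\phi_{i,\ell}|\leq C$ (Lemma \ref{bound for tilde phi}) \emph{before} invoking Chern--Lu. This is done by rewriting the Monge--Amp\`ere equation relative to $\Omega_{\beta_i}$ as $(\Omega_{\beta_i}+\ddbar\tilde\phi_{i,\ell})^n=e^{\tilde\phi_{i,\ell}+\tilde F_{i,\ell}}\Omega_{\beta_i}^n$ and showing $\|\tilde F_{i,\ell}\|_{C^0(X)}\leq C$ uniformly in $\beta$; the heart of that claim is the volume-form comparison
\[
\Omega_{\beta_i}^n=e^{O(1)}\,\frac{\beta_i^2}{|s_i|_{h_i}^{2(1-\beta_i)}\left(1-|s_i|_{h_i}^{2\beta_i}\right)^2}\,\omega^n,
\]
which is exactly what the $\beta_i$-normalization inside $\psi_{\beta_i}$ is designed to produce, and which your proposal never uses. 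With this uniform $C^0$ estimate in hand, the test function $\log\operatorname{tr}_{\omega_{i,\ell}}\Omega_{\beta_i}-A\tilde\phi_{i,\ell}+\epsilon\log|s_i|^2_{h_i}$ has uniformly controlled oscillation (the last barrier term also forces the maximum off $D_i$, something your appeal to an ``interior maximum'' quietly needs), and the maximum principle closes with a constant independent of $\beta$. In short, your diagnosis of the obstacle is accurate, but the resolution you propose assumes the proposition; the actual resolution is the uniform estimate on the relative potential, which is the real content of the paper's proof.
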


Let us first show Theorem \ref{compare omegaphi and Omegabeta} is an easy consequence of Proposition \ref{comp omegaphi and Omegabetai}.
\begin{proof}[Proof of Theorem \ref{compare omegaphi and Omegabeta}]
Adding \eqref{omegaphi g Omegai} for all $i=1,\dots, r$, there holds
\[
\omega_{\phi_\beta}\geq \frac{C'}{r}\cdot\Omega_{\beta}.
\]
By Lemma \ref{shouchong}, $\Omega_\beta$ has cone singularities with cone angle $2\pi \beta_i$ along $D_i$, for $i=1.\dots, r$. Thus, $\Omega_\beta^n$ and $\omega_{\phi_\beta}^n$ are uniformly equivalent on $X\setminus \operatorname{supp}(D_\beta)$ \cite{Datar-Song}. The lower bound on the metric $\omega_{\phi_\beta}$ then implies the upper bound on $\omega_{\phi_\beta}$ \cite[\S 7.2]{rubinstein2014smooth}, i.e., there exists a uniform constant $C>0$ such that \eqref{bound for kee metric wrt ref metric} holds on $X\setminus \operatorname{supp}(D_\beta)$.
\end{proof}

\begin{proof}[Proof of Proposition \ref{comp omegaphi and Omegabetai}]
We divide the proof into three steps. First, we approximate $\omega_{\phi_\beta}$ by using \KE\;edge metrics with singularities only along a smooth divisor. Then we deduce the zero-order estimates for $\Omega_{\beta_i}$ and the approximating metrics. In the last step, we show the approximating metrics are bounded below by some constant times $\Omega_{\beta_i}$, and hence so is $\omega_{\phi_\beta}$.

\begin{rmk}
\textup{
When $r=1$, the proof of \eqref{omegaphi g Omegai} is already given in \cite[Proposition 4.2]{guenancia2020kahler}. The main difference for the case $r>1$ is that $\omega_{\phi_\beta}$ admits crossing edge singularities. Therefore, we first prove similar estimates as in \eqref{omegaphi g Omegai} for a sequence of approximating metrics following the arguments in \cite{guenancia2020kahler}. Then we show \eqref{omegaphi g Omegai} can be obtained by approximating $\omega_{\phi_\beta}$ using K\"{a}hler edge metrics in a proper way. 
}
\end{rmk}

\textbf{Step 1: Approximating $\boldsymbol{\omega_{\phi_\beta}}$.}

Let $f_i:=\log \left(\Pi_{j=1, j\neq i}^r |s_j|^{2(1-\beta_j)}_{h_j} \right)$ for $i=1,\dots, r$. For some constant $A\gg 1$, $\ddbar f_i>-A\omega$ as currents. By Demailly's regularization theorem \cite{demailly1992regularization} (for our use, the particular case treated in \cite[Theorem 1]{blocki--kolodziej} would be enough), there exist $F_{i, \ell}\in C^{\infty}(X)$ such that $F_{i, \ell}\searrow f_i$ and $\ddbar F_{i, \ell}>-A\omega$. Consider a family of Monge--Amp\`{e}re equations:
\begin{align}\label{Family of MAs}
    \left\{
    \begin{array}{ll}
      \displaystyle(\omega+\ddbar \phi_{i, \ell})^n=\frac{e^{f-F_{i, \ell}+\phi_{i, \ell}}\omega^n}{|s_i|_{h_i}^{2(1-\beta_i)}},  \\
     \omega_{i, \ell}=\omega+\ddbar \phi_{i, \ell}>0.
    \end{array}
    \right.
\end{align}
$\omega_{i, \ell}$ can be seen as a sequence of \KE\;edge metrics to approximate the \KE\;crossing edge metric $\omega_{\phi_\beta}$ for each $i$. 
\begin{lem}
$||\phi_{i, \ell}-\phi_\beta||_{C^0(X)}\to 0$ as $\ell\to \infty$ for each $i=1,\dots, r$.
\end{lem}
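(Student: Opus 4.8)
The plan is to rewrite both Monge--Amp\`ere equations in the common form $\omega_\psi^n=e^{\psi}\mu$ and to exploit the monotonicity of the right-hand measures together with the pluripotential comparison principle, rather than a pointwise maximum principle. Set $\mu_\beta:=e^{f}\prod_{j=1}^r|s_j|_{h_j}^{-2(1-\beta_j)}\,\omega^n$ and $\mu_{i,\ell}:=e^{f-F_{i,\ell}}|s_i|_{h_i}^{-2(1-\beta_i)}\,\omega^n$, so that \eqref{MA eq} reads $\omega_{\phi_\beta}^n=e^{\phi_\beta}\mu_\beta$ and \eqref{Family of MAs} reads $\omega_{i,\ell}^n=e^{\phi_{i,\ell}}\mu_{i,\ell}$. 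Since $F_{i,\ell}\searrow f_i=\log\prod_{j\neq i}|s_j|_{h_j}^{2(1-\beta_j)}$ with $F_{i,\ell}\ge f_i$, the densities obey $\mu_{i,\ell}\le\mu_{i,\ell+1}\le\mu_\beta$, with $\mu_{i,\ell}\nearrow\mu_\beta$ pointwise a.e. All potentials are bounded $\omega$-psh and in fact continuous on $X$ (each density $|s_i|_{h_i}^{-2(1-\beta_i)}$ lies in $L^p(X,\omega^n)$ for some $p>1$ because $\beta_i\in(0,1)$, so Kolodziej's theory applies), whence the Monge--Amp\`ere measures are the Bedford--Taylor measures and each equation holds as an identity of measures on $X$.

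First I would deduce two comparisons from the comparison principle $\int_{\{u<v\}}\omega_v^n\le\int_{\{u<v\}}\omega_u^n$ for bounded $\omega$-psh $u,v$. Taking $u=\phi_{i,\ell}$ and $v=\phi_\beta$, on the set $U=\{\phi_{i,\ell}<\phi_\beta\}$ one has $e^{\phi_{i,\ell}}<e^{\phi_\beta}$ and $\mu_{i,\ell}\le\mu_\beta$, so $\int_U e^{\phi_\beta}\mu_\beta\le\int_U e^{\phi_{i,\ell}}\mu_{i,\ell}<\int_U e^{\phi_\beta}\mu_\beta$ unless $U$ is $\mu_\beta$-null; since $\mu_\beta$ has positive density a.e., $U$ is Lebesgue-null, and therefore $\phi_{i,\ell}\ge\phi_\beta$ on $X$. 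The identical argument with $u=\phi_{i,\ell}$, $v=\phi_{i,\ell+1}$ and $\mu_{i,\ell}\le\mu_{i,\ell+1}$ gives $\phi_{i,\ell+1}\le\phi_{i,\ell}$. Hence $\{\phi_{i,\ell}\}_\ell$ decreases in $\ell$ and is bounded below by $\phi_\beta$, so it converges pointwise to a bounded $\omega$-psh function $\phi_{i,\infty}\ge\phi_\beta$.

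Next I would pass to the limit in the equation and identify $\phi_{i,\infty}$. By the Bedford--Taylor convergence theorem for decreasing sequences of bounded $\omega$-psh functions, $\omega_{i,\ell}^n\to\omega_{\phi_{i,\infty}}^n$ weakly. On the right-hand side the densities $e^{\phi_{i,\ell}}e^{f-F_{i,\ell}}|s_i|_{h_i}^{-2(1-\beta_i)}$ are dominated by $C\,e^{f-f_i}|s_i|_{h_i}^{-2(1-\beta_i)}\in L^1(X)$ (using $\phi_\beta\le\phi_{i,\ell}\le\phi_{i,1}$ and $F_{i,\ell}\ge f_i$) and converge a.e. to the density of $e^{\phi_{i,\infty}}\mu_\beta$, so dominated convergence yields $e^{\phi_{i,\ell}}\mu_{i,\ell}\to e^{\phi_{i,\infty}}\mu_\beta$. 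Thus $\phi_{i,\infty}$ is a bounded solution of $\omega_{\phi_{i,\infty}}^n=e^{\phi_{i,\infty}}\mu_\beta$, the same equation solved by $\phi_\beta$; uniqueness of the bounded solution (again the comparison principle, as $t\mapsto e^{t}$ is increasing) forces $\phi_{i,\infty}=\phi_\beta$. Finally, $\phi_{i,\ell}$ and $\phi_\beta$ are continuous on the compact manifold $X$ and $\phi_{i,\ell}\searrow\phi_\beta$ pointwise, so Dini's theorem upgrades this to uniform convergence, giving $\|\phi_{i,\ell}-\phi_\beta\|_{C^0(X)}\to0$.

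The main obstacle is conceptual rather than computational: because $\phi_\beta$ and $\phi_{i,\ell}$ are merely edge and crossing-edge potentials and need not be $C^2$ across the divisors, the naive maximum principle is unavailable and must be replaced by the Bedford--Taylor comparison principle; this forces one to verify a priori that all potentials are bounded and continuous, so that the Monge--Amp\`ere measures are genuine Bedford--Taylor measures, the equations hold globally as measures, and Dini's theorem applies at the end. A secondary point requiring care is avoiding circularity in the limit: one must first extract the monotone pointwise limit $\phi_{i,\infty}$, identify it with $\phi_\beta$ via uniqueness of the limiting equation, and only afterward invoke Dini to promote the convergence to $C^0(X)$.
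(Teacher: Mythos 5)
Your proof is correct, but it takes a genuinely different route from the paper's. The paper argues via stability: it records that each $\phi_{i,\ell}$ exists in Donaldson's H\"older spaces, asserts the pointwise a.e.\ convergence $\phi_{i,\ell}\to\phi_\beta$ (``by integrating both sides'' of the two Monge--Amp\`ere equations), notes that the densities $\omega_{i,\ell}^n/\omega^n$ lie in $L^p(X,\omega)$ for some $p>1$ and converge in $L^1(X,\omega)$ to $\omega_{\phi_\beta}^n/\omega^n$, and then invokes Kolodziej's stability theorem for the complex Monge--Amp\`ere equation to convert $L^1$ convergence of the measures into $C^0$ convergence of the potentials. You instead exploit the monotonicity $F_{i,\ell}\searrow f_i$ built into Demailly's regularization: the pluripotential comparison principle gives the sandwich $\phi_\beta\le\phi_{i,\ell+1}\le\phi_{i,\ell}$, Bedford--Taylor continuity along decreasing sequences together with dominated convergence identifies the monotone limit as a bounded solution of the limiting equation, uniqueness (again by comparison, since $t\mapsto e^t$ is increasing) pins the limit down as $\phi_\beta$, and Dini's theorem upgrades monotone pointwise convergence of continuous functions on the compact $X$ to uniform convergence. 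What the paper's route buys: it needs only $L^1$ convergence of the right-hand densities, with no monotonicity of the approximation, and the stability estimate is quantitative. What your route buys: it avoids the stability theorem as a black box, it yields the stronger structural conclusion $\phi_{i,\ell}\searrow\phi_\beta$, and it supplies a rigorous substitute for the a.e.-convergence step that the paper treats rather tersely. The one caveat is that your argument hinges on $F_{i,\ell}\ge f_i$ decreasing; if the regularization were not monotone, the sandwich would collapse and one would have to fall back on a stability argument of the kind the paper uses.
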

\begin{proof}
First, note that for each $i$, there exists a solution $\phi_{i, \ell} \in C^{2, \alpha_{i}, \beta_i}(X)$ to \eqref{Family of MAs} for some $\alpha_{i}\in (0, 1)$, where the H\"{o}lder space $C^{2, \alpha_{i}, \beta_i}(X)$ is defined as in \cite{donaldson2012kahler}. By integrating both sides of \eqref{MA eq} and \eqref{Family of MAs}, one finds $\phi_{i, \ell}$ pointwise converges to $\phi_\beta$ almost everywhere. By assumptions on $\beta_i$ and $\phi_{i, \ell}$, $\omega_{i, \ell}^n/\omega^n\in L^{p}(X, \omega)$ for some $p>1$. Thus $\omega_{i, \ell}^n/\omega^n \to \omega_{\phi_\beta}^n/\omega^n$ in $L^1(X, \omega)$. Then we finish the proof by applying Kolodziej's stability result for Monge--Amp\`{e}re equation \cite[Theorem 4.1]{kolodziej2003monge}.
\end{proof}

\textbf{Step 2: Comparing $\boldsymbol{\phi_{i, \ell}}$ with $\boldsymbol{\psi_{\beta_i}}$.}

We first compare the potential functions of $\omega_{i, \ell}$ and $\Omega_{\beta_i}$. Let $\tilde{\phi}_{i, \ell}:=\phi_{i, \ell}-\psi_{\beta_i}$, then using \eqref{Defi of Omegabetai} and \eqref{Family of MAs} we get
\begin{align}
    \displaystyle(\omega+\sqrt{-1}\partial \bar{\partial} \phi_{i, \ell})^n&=\frac{e^{f-F_{i, \ell}+\phi_{i, \ell}}\omega^n}{|s_i|_{h_i}^{2(1-\beta_i)}},
    \\
    \displaystyle\Rightarrow 
    (\omega+\sqrt{-1}\partial \bar{\partial} \psi_{\beta_i}+\tilde{\phi}_{i, \ell})^n&=\frac{e^{f-F_{i, \ell}+\psi_{\beta_i}+\tilde{\phi}_{i, \ell}}\omega^n}{|s_i|_{h_i}^{2(1-\beta_i)}},
    \\
    \Rightarrow
    (\Omega_{\beta_i}+\ddbar \tilde{\phi}_{i, \ell})^n&=e^{\tilde{\phi}_{i, \ell}+\tilde{F}_{i, \ell}}\Omega^n_{\beta_i},\label{MA for Omegabetai}
\end{align}
where $\displaystyle \tilde{F}_{i, \ell}=\psi_{\beta_i}-F_{i, \ell}+f+\log \left(\frac{\omega^n}{|s_i|_{h_i}^{2(1-\beta_i)}\Omega^n_{\beta_i}} \right)$. Then we claim that 
\begin{claim}
For some uniform constant $C>0$,
\begin{align}\label{Bd for Fil}
||\tilde{F}_{i, \ell}||_{C^0(X)}\leq C.
\end{align}
\end{claim}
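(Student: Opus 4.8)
The plan is to separate $\tilde F_{i,\ell}$ into a piece that carries the edge singularity along $D_i$ and a benign remainder, and to show that the singular piece is bounded uniformly in $\beta$ because its apparent singularities cancel. Write
\[
\tilde F_{i,\ell} = I_i + (f - F_{i,\ell}),\qquad I_i := \psi_{\beta_i} + \log\frac{\omega^n}{|s_i|_{h_i}^{2(1-\beta_i)}\,\Omega_{\beta_i}^n}.
\]
Only $I_i$ involves the divisor $D_i$, so the first step is to rewrite it against the one-dimensional model density $M_i := \frac{\beta_i^2 |s_i|_{h_i}^{2\beta_i-2}}{(1-|s_i|_{h_i}^{2\beta_i})^2}$ appearing at the start of this section. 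Using $|s_i|_{h_i}^{2(1-\beta_i)}\,|s_i|_{h_i}^{2\beta_i-2}=1$ and the definition of $\psi_{\beta_i}$, every power of $|s_i|_{h_i}$, every factor $\beta_i$, and every factor $1-|s_i|_{h_i}^{2\beta_i}$ cancels, and I expect to obtain the clean identity $I_i = -\log G_i$ with $G_i := \Omega_{\beta_i}^n/(M_i\,\omega^n)$. The claim for $I_i$ thus reduces to uniform two-sided bounds $0<c\le G_i\le C$, independent of $\beta$.

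The heart of the matter is this volume comparison, and it is where I expect the main difficulty to lie, since both $|s_i|_{h_i}\to 0$ and $\beta_i\to 0$ must be handled simultaneously. I would compute $\Omega_{\beta_i}^n$ from the explicit formula \eqref{formula for omegabeta}: up to the term $-\tfrac{\beta_i|s_i|_{h_i}^{2\beta_i}}{1-|s_i|_{h_i}^{2\beta_i}}\theta_i$, which by the rescaling of $h_i$ in Lemma \ref{shouchong} is $\le \tfrac1{2r}\omega$ and hence bounds $\Omega_{\beta_i}$ between $\beta$-independent multiples of $\tfrac1r\omega + 2M_i V_i$, where $V_i := \langle D^{1,0}s_i, D^{1,0}s_i\rangle$. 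The key structural observation is that $V_i$ is a nonnegative $(1,1)$-form of rank one, so $V_i\wedge V_i=0$ and
\[
\Big(\tfrac1r\omega + 2M_i V_i\Big)^n = \tfrac1{r^n}\omega^n + \tfrac{2}{r^{n-1}} M_i\,(\operatorname{tr}_\omega V_i)\,\omega^n,
\]
whence $G_i \asymp \tfrac{2}{r^{n-1}}\operatorname{tr}_\omega V_i + \tfrac{1}{r^n M_i}$. The upper bound on $G_i$ follows from $\operatorname{tr}_\omega V_i=|D^{1,0}s_i|_\omega^2\le C$ together with a uniform lower bound $\inf M_i>0$. For the lower bound I would argue by regions: on a fixed neighborhood of $D_i$ the term $\operatorname{tr}_\omega V_i$ is bounded below away from zero, because $D^{1,0}s_i|_{D_i}=ds_i\neq 0$ for the smooth component $D_i$; away from $D_i$ the density $M_i$ is bounded above, so $1/(r^n M_i)$ is bounded below. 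The one remaining point, and the crux of the uniformity, is a $\beta$-independent two-sided bound for $M_i$ on the range $0<|s_i|_{h_i}^2\le e^{-1}$; this I would extract from the pointwise limit \eqref{eq: convergence of edge to cusp in 1d}, namely $M_i\to \tfrac{1}{|s_i|^2(\log|s_i|^2)^2}$ as $\beta_i\to 0$, together with the continuous dependence of $M_i$ on $\beta_i$, checking in particular that $M_i$ stays bounded below as $\beta_i\to 0$.

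It then remains to control $f-F_{i,\ell}$. Here $f\in C^\infty(X)$ is fixed, so $|f|\le C$. For the regularizations, $F_{i,\ell}\searrow f_i$ gives $f_i\le F_{i,\ell}\le F_{i,1}$; since the normalization $\log|s_j|_{h_j}^2+1<0$ forces $f_i\le 0$, I can take $\sup_X F_{i,1}$ bounded independently of $\beta$, so that $-F_{i,\ell}\ge -C$ uniformly in $\ell$ and $\beta$. Combined with the bounds on $I_i$, this yields the uniform lower bound $\tilde F_{i,\ell}\ge -C$, which is precisely what the maximum principle applied to \eqref{MA for Omegabetai} will consume in the next step; and since for each fixed $\ell$ the function $F_{i,\ell}$ is smooth and the cancellation above shows $I_i$ extends continuously across $D_i$, we get $\tilde F_{i,\ell}\in C^0(X)$ with the asserted bound, the constant being uniform in $\beta$.
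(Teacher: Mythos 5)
Your proposal is correct in substance, and at the decisive step it takes a genuinely different route from the paper. The paper performs the same initial reduction you do (absorb $f-F_{i,\ell}$, reduce to showing $\Omega_{\beta_i}^n=e^{O(1)}M_i\,\omega^n$), but it then proves the volume comparison by a local-coordinate computation: it expands $\langle D^{1,0}s_i,D^{1,0}s_i\rangle$ in coordinates, writes the coefficient matrix of $\Omega_{\beta_i}$ as a block matrix as in \eqref{bmat for A}, applies the block-determinant formula \eqref{det bmat}, and identifies $M_i$ as the dominant term among a list of $O(\cdot)$ remainders. You instead exploit the structural fact that $V_i=\sqrt{-1}\langle D^{1,0}s_i,D^{1,0}s_i\rangle$ has rank one, so $V_i\wedge V_i=0$ and $\bigl(\tfrac1r\omega+2M_iV_i\bigr)^n$ has exactly two terms; combined with the rescaling of $h_i$ from Lemma \ref{shouchong} (which controls the $\theta_i$-term by $\tfrac{1}{2r}\omega$), this gives the clean dichotomy $G_i\asymp \operatorname{tr}_\omega V_i+M_i^{-1}$ with $\beta$-independent constants, and then the two-region argument (near $D_i$ use $\operatorname{tr}_\omega V_i\ge c>0$ since $ds_i\neq 0$ on the smooth divisor; away from $D_i$ use an upper bound on $M_i$). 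This is cleaner than the paper's bookkeeping and, in fact, more transparent about uniformity in $\beta$: the paper's $O(|s|^{4\beta-2})$-type remainders are dominated by $M_i$ only when the $\beta$-factors suppressed in the $O$'s are tracked, a point the paper glosses over.

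Two points in your write-up need repair, both fixable. First, the phrase ``a $\beta$-independent two-sided bound for $M_i$ on the range $0<|s_i|_{h_i}^2\le e^{-1}$'' is false as literally stated: $M_i\to\infty$ as $|s_i|_{h_i}\to 0$, so no uniform upper bound exists on that whole range. What your region argument actually uses---and what is true---is (i) a global uniform lower bound $\inf M_i\ge c>0$ and (ii) an upper bound for $M_i$ only on $\{|s_i|_{h_i}^2\ge\epsilon\}$. Second, ``pointwise limit plus continuous dependence'' does not prove (i): the domain is non-compact in the $|s_i|$-direction, so continuity and compactness only yield (ii). For (i), use instead the elementary inequality $1-t^{\beta_i}\le -\beta_i\log t$ for $t\in(0,1]$, which gives $M_i=\tfrac{\beta_i^2t^{\beta_i-1}}{(1-t^{\beta_i})^2}\ge \tfrac{t^{\beta_i-1}}{\log^2 t}\ge \tfrac{1}{\sqrt{t}\,\log^2 t}\ge e^2/16$ on $t=|s_i|_{h_i}^2\in(0,e^{-1}]$ when $\beta_i\le\tfrac12$. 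With these two corrections your proof is complete. Finally, your handling of $f-F_{i,\ell}$ (only the lower bound on $\tilde F_{i,\ell}$ is uniform in $\ell$, the upper bound holding for each fixed $\ell$ by smoothness) is at least as careful as the paper's own treatment, which simply invokes boundedness of the smooth function $F_{i,\ell}$ on the compact $X$ without addressing uniformity in $\ell$.
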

\begin{proof}
First note that $F_{i, \ell}$ and $f$ are smooth on $X$ by construction, hence $F_{i, \ell}$ and $f$ are bounded as $X$ is compact. Therefore, it suffices to show
\begin{align*}
\tilde{F}_{i, \ell}+F_{i, \ell}-f&=\psi_{\beta_i}+\log \left(\frac{\omega^n}{|s_i|^{2(1-\beta_i)}_{h_i}\Omega_{\beta_i}^n} \right)\\
&=\log \left(\frac{\beta_i^2 \omega^n}{|s_i|_{h_i}^{2(1-\beta_i)}(1-|s_i|^{2\beta_i}_{h_i})^2 \Omega_{\beta_i}^n}\right)
\end{align*}
is bounded. Below we drop the $i$ in the subscript for simplicity. To prove the claim, it is equivalent to show
\begin{align*}
    \Omega_\beta^n =\frac{\beta^2}{|s|_h^{2(1-\beta)}(1-|s|^{2\beta}_h)^2} e^{O(1)} \omega^n
\end{align*}
near the divisor. Let $p\in M\setminus D$ near $D$. Let $e$ be a local holomorphic frame for $L_D$, and $(z_1,\dots, z_n)$ be a local holomorphic coordinate chart such that $s=z_1 e$. Let $h=e^{-\phi}$ be a smooth hermitian metric on $\mathcal{O}_X(D)$ and $\theta$ the curvature form of $(L_D, h)$. Denote
\begin{align*}
    \omega&=\sqrt{-1}g_{i\bar{j}} dz_i\wedge d\bar{z}_j,\\
    \theta&=\sqrt{-1}\phi_{i\bar{j}} dz_i\wedge d\bar{z}_j.
\end{align*}
Recall the expression \eqref{formula for omegabeta} of $\Omega_\beta$. We calculate
\begin{align*}
    \langle D^{1, 0}s, D^{1, 0}s\rangle&=e^{-\phi}(dz_1+z_1\frac{\partial \phi}{\partial z_k}dz_k)\wedge (d\bar{z}_1+\bar{z}_1\frac{\partial \phi}{\partial \bar{z}_k}d\bar{z}_k)\\
    &=e^{-\phi}[
    (1+\bar{z}_1\frac{\partial \phi}{\partial \bar{z}_k}+z_1\frac{\partial \phi}{\partial z_1}+|z_1|^2\frac{\partial \phi}{\partial z_1}\frac{\partial \phi}{\partial \bar{z}_1})dz_1\wedge d\bar{z}_1\\
    &+\sum_{k=2}^n (\bar{z}_1\frac{\partial \phi}{\partial \bar{z}_k}+|z_1|^2\frac{\partial \phi}{\partial z_1}\frac{\partial \phi}{\partial \bar{z}_k})dz_1\wedge d\bar{z}_k\\
    &+\sum_{k=2}^n ({z}_1\frac{\partial \phi}{\partial {z}_k}+|z_1|^2\frac{\partial \phi}{\partial z_k}\frac{\partial \phi}{\partial \bar{z}_1})dz_k\wedge d\bar{z}_1\\
    &+\sum_{k, l=2}^n |z_1|^2\frac{\partial \phi}{\partial z_k}\frac{\partial \phi}{\partial \bar{z}_l}dz_k\wedge d\bar{z}_l
    ].
\end{align*}
Hence, 
\begin{align*}
    &\frac{\beta^2}{|s|_h^{2(1-\beta)}(1-|s|_h^{2\beta})^2}\langle D^{1, 0}s, D^{1, 0}s\rangle\\
    &=(\frac{\beta^2}{|s|_h^{2(1-\beta)}(1-|s|_h^{2\beta})^2}+O(1))dz_1\wedge d\bar{z}_1+\sum_{k=2}^n (\frac{\beta^2|s|_h^{2\beta}}{z_1(1-|s|_h^{2\beta})^2}+O(1))dz_1\wedge d\bar{z}_k\\
    &+\sum_{k=2}^n (\frac{\beta^2|s|^{2\beta}_h}{\bar{z}_1(1-|s|_h^{2\beta})^2}+O(1))dz_k\wedge d\bar{z}_1+\sum_{k, l=2}^n\frac{\beta^2 |s|_h^{2\beta}}{(1-|s|_h^{2\beta})^2} dz_k\wedge d\bar{z}_l\\
    &=(\frac{\beta^2}{|s|_h^{2(1-\beta)}(1-|s|_h^{2\beta})^2}+O(1))dz_1\wedge d\bar{z}_1+\sum_{k=2}^n (\frac{\beta^2|s|_h^{2\beta}}{z_1(1-|s|_h^{2\beta})^2}+O(1))dz_1\wedge d\bar{z}_k\\
    &+\sum_{k=2}^n (\frac{\beta^2|s|^{2\beta}_h}{\bar{z}_1(1-|s|_h^{2\beta})^2}+O(1))dz_k\wedge d\bar{z}_1+\sum_{k, l=2}^n O(1) dz_k\wedge d\bar{z}_l.
\end{align*}
Let 
\begin{align*}
    (A_{ij})_{i, j=1}^n=(g_{i\bar{j}})_{i, j=1}^n-\frac{\beta |s|^{2\beta}_h}{1-|s|_h^{2\beta}}(\phi_{i\bar{j}})_{i, j=1}^n+\frac{\beta^2}{|s|_h^{2(1-\beta)}(1-|s|_h^{2\beta})^2}\langle D^{1, 0}s, D^{1, 0}s \rangle. 
\end{align*}
Write $(A_{ij})_{i, j=1}^n$ as a block matrix
\begin{equation}\label{bmat for A}
   (A_{ij})_{i, j=1}^n= \left[
    \begin{matrix}
    A_{11} & \vec{A_{r}}\\
    \vec{A_c} & (A_{ij})_{i, j=2}^n
    \end{matrix}
    \right],
\end{equation}
then
\begin{align*}
    A_{11}&=g_{1\bar{1}}-\frac{\beta |s|_h^{2\beta}}{1-|s|_h^{2\beta}}\phi_{1\bar{1}}+\frac{\beta^2}{|s|_h^{2(1-\beta)}(1-|s|_h^{2\beta})^2}+O(|s|_h^{2\beta-1})+O(|s|_h^{2\beta})O(1),\\
    A_{1j}&=g_{1\bar{j}}-\frac{\beta |s|_h^{2\beta}}{1-|s|_h^{2\beta}}\phi_{1\bar{j}}+O(|s|_h^{2\beta-1})+O(|s|_h^{2\beta})+O(1),\quad j=2,\dots, n,\\
    A_{i1}&=g_{i\bar{1}}-\frac{\beta |s|_h^{2\beta}}{1-|s|_h^{2\beta}}\phi_{i\bar{1}}+O(|s|_h^{2\beta -1})+O(|s|_h^{2\beta})+O(1),\quad i=2,\dots, n,\\
    A_{kl}&=O(|s|_h^{2\beta})+O(1),\quad k, l=2,\dots, n.
\end{align*}
Recall the formula for determinant of block matrices as in \eqref{bmat for A},
\begin{align}\label{det bmat}
    \det (A_{ij})_{i, j=1}^n=\det (A_{ij})_{i, j=2}^n\cdot (A_{11}-\vec{A_r}((A_{ij})_{i, j=2}^n)^{-1}\vec{A_c}).
\end{align}
Using \eqref{det bmat}, 
\begin{align*}
    \frac{\Omega_\beta^n}{\omega^n}&=\frac{\det(g_{i\bar{j}}-\frac{\beta |s|_h^{2\beta}}{1-|s|_h^{2\beta}}\phi_{i\bar{j}}+\frac{\beta^2}{|s|_h^{2(1-\beta)}(1-|s|_h^{2\beta})^2}\langle D^{1, 0}s, D^{1, 0}s \rangle)}{\det (g_{i\bar{j}})}\\
    &=\frac{\det (A_{ij})_{i, j=1}^n}{\det (g_{i\bar{j}})}\\
    &=e^{O(1)}\cdot \det (A_{ij})_{i, j=2}^n\cdot (A_{11}-\vec{A_r}((A_{ij})_{i, j=2}^n)^{-1}\vec{A_c})\\
    &=e^{O(1)}\cdot \left(\frac{\beta^2}{|s|_h^{2(1-\beta)}(1-|s|_h^{2\beta})^2}+O(|s|_h^{4\beta-2})+O(|s|_h^{2\beta-1})+O(|s|_h^{4\beta-1})+O(|s|_h^{2\beta})+O(|s|_h^{4\beta})+O(1)\right).
\end{align*}
Thus, one finds that the dominant term is $\displaystyle\frac{\beta^2}{|s|_h^{2(1-\beta)}(1-|s|_h^{2\beta})^2}$. In another word, we have shown $\displaystyle\Omega_\beta^n=e^{O(1)}\frac{\beta^2}{|s|_h^{2(1-\beta)}(1-|s|_h^{2\beta})^2} \omega^n$, which is exactly what we need.
\end{proof}

\begin{lem}\label{bound for tilde phi}
There exists a uniform constant $C>0$ such that for any $\ell$,
\begin{align*}
    \sup_{X\setminus D_i} |\tilde {\phi}_{i, \ell}|\leq C.
\end{align*}
\end{lem}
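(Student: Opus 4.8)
The plan is to read \eqref{MA for Omegabetai} as a complex Monge--Amp\`ere equation of \emph{monotone} type, $(\Omega_{\beta_i}+\ddbar\tilde{\phi}_{i,\ell})^n=e^{\tilde{\phi}_{i,\ell}+\tilde{F}_{i,\ell}}\,\Omega_{\beta_i}^n$, and to extract the $C^0$ bound directly from the maximum principle, using only the uniform bound $\|\tilde{F}_{i,\ell}\|_{C^0(X)}\le C$ recorded in \eqref{Bd for Fil}. The sign of the $\tilde{\phi}_{i,\ell}$-term in the exponent is exactly the one that makes the equation monotone, so no pluripotential-theoretic input is needed here. Since $\phi_{i,\ell}\in C^{2,\alpha_i,\beta_i}(X)$ and $\psi_{\beta_i}$ has bounded potential (it extends continuously across $D_i$ with limiting value $2\log\beta_i$), the difference $\tilde{\phi}_{i,\ell}=\phi_{i,\ell}-\psi_{\beta_i}$ extends continuously to all of $X$ and is smooth on $X\setminus D_i$; in particular its supremum and infimum over the compact manifold $X$ are attained.

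Formally, at an interior maximum point $p\in X\setminus D_i$ one has $\ddbar\tilde{\phi}_{i,\ell}(p)\le 0$, hence $(\Omega_{\beta_i}+\ddbar\tilde{\phi}_{i,\ell})^n\le \Omega_{\beta_i}^n$ at $p$, which forces $e^{\tilde{\phi}_{i,\ell}+\tilde{F}_{i,\ell}}\le 1$ and therefore $\tilde{\phi}_{i,\ell}(p)\le -\tilde{F}_{i,\ell}(p)\le C$. Symmetrically, at an interior minimum $q$ one gets $\ddbar\tilde{\phi}_{i,\ell}(q)\ge 0$, so $e^{\tilde{\phi}_{i,\ell}+\tilde{F}_{i,\ell}}\ge 1$ and $\tilde{\phi}_{i,\ell}(q)\ge -C$. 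Combining the two would give $\sup_{X\setminus D_i}|\tilde{\phi}_{i,\ell}|\le C$ with $C=\|\tilde{F}_{i,\ell}\|_{C^0(X)}$ as in \eqref{Bd for Fil}, which is uniform in $\ell$ (and in $\beta$).

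The one point requiring care --- and the main obstacle --- is that the extrema of $\tilde{\phi}_{i,\ell}$ could a priori be located on the singular locus $D_i$, where $\Omega_{\beta_i}$ degenerates and the naive second-derivative test is unavailable. To force the extrema into the smooth locus I would run a Jeffres-type barrier argument: for the upper bound replace $\tilde{\phi}_{i,\ell}$ by $\tilde{\phi}_{i,\ell}+\epsilon\log|s_i|_{h_i}^2$, which tends to $-\infty$ along $D_i$ (recall the normalization $\log|s_i|_{h_i}^2+1<0$), so that its maximum is attained at some $p_\epsilon\in X\setminus D_i$. Since $\ddbar\log|s_i|_{h_i}^2=-\theta_i$ away from $D_i$, the test at $p_\epsilon$ now reads $\ddbar\tilde{\phi}_{i,\ell}\le \epsilon\theta_i$, whence $(\Omega_{\beta_i}+\ddbar\tilde{\phi}_{i,\ell})^n\le(\Omega_{\beta_i}+\epsilon\theta_i)^n$ at $p_\epsilon$. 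Using $\Omega_{\beta_i}\ge c\,\omega$ (which follows from the rescaling of $h_i$ as in Lemma \ref{shouchong}), the ratio $(\Omega_{\beta_i}+\epsilon\theta_i)^n/\Omega_{\beta_i}^n$ is bounded by $(1+\epsilon C')^n$ uniformly on $X$, yielding $\tilde{\phi}_{i,\ell}(p_\epsilon)\le C+n\log(1+\epsilon C')$; letting $\epsilon\to 0$ recovers $\sup_{X\setminus D_i}\tilde{\phi}_{i,\ell}\le C$ with no loss. The lower bound is obtained in the same way with the opposite barrier $\tilde{\phi}_{i,\ell}-\epsilon\log|s_i|_{h_i}^2$, which blows up to $+\infty$ along $D_i$ and pushes the minimum into $X\setminus D_i$. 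Since the barrier error terms vanish as $\epsilon\to 0$, the final constant is precisely $\|\tilde{F}_{i,\ell}\|_{C^0(X)}$, preserving uniformity in both $\ell$ and $\beta$.
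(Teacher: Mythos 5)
Your proposal is correct and follows essentially the same route as the paper: the same Jeffres-type barrier $\tilde{\phi}_{i,\ell}\pm\epsilon\log|s_i|^2_{h_i}$ to push the extrema off $D_i$, the same maximum-principle comparison $(\Omega_{\beta_i}+\ddbar\tilde{\phi}_{i,\ell})^n\leq(\Omega_{\beta_i}+\epsilon\theta_i)^n\leq C_\epsilon\,\Omega_{\beta_i}^n$ using $\theta_i\lesssim\Omega_{\beta_i}$ from Lemma \ref{shouchong}, and the same conclusion via \eqref{Bd for Fil} after letting $\epsilon\to 0$. The only cosmetic difference is your sharper factor $(1+\epsilon C')^n$ where the paper uses $2^n$, which makes no difference in the limit.
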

\begin{proof}
Let $\chi_{i, \ell, \epsilon}=\tilde{\phi}_{i, \ell}+\epsilon \log |s_i|^2_{h_i}$ for small $\epsilon>0$. Since $\chi_{i, \ell, \epsilon}(p)$ approaches $-\infty$ when $p\to D_i$,   $\chi_{i, \ell, \epsilon}$ obtains its maximum on $X\setminus D_i$, at say $p_{\max}$. Then
\begin{align*}
    0\geq \ddbar \tilde{\phi}_{i, \ell}(p_{\max} )-\epsilon \theta_i(p_{\max}),
\end{align*}
where $\theta_i$ is the curvature of the Chern connection on $(L_{D_i}, h_i)$.
Then at $p_{\max}$, 
\begin{align}
    (\Omega_{\beta_i}+\ddbar \tilde{\phi}_{i, \ell})^n &\leq (\Omega_{\beta_i}+\epsilon \theta_i)^n\\
    &\leq 2^n \Omega^n_{\beta_i},
    \label{Est for Omegabetai}
\end{align}
by the fact that $\Omega_{\beta_i}\geq \epsilon \theta_i$ for small enough $\epsilon$, as shown in Lemma \ref{shouchong}.
Combining \eqref{MA for Omegabetai} and \eqref{Est for Omegabetai}, at $p_{\max}$, 
\begin{align*}
    e^{\tilde{\phi}_{i, \ell}+\tilde{F}_{i, \ell}}(p_{\max})&\leq 2^n\\
    \Rightarrow \tilde{\phi}_{i, \ell}(p_{\max})&\leq n\log 2-\tilde{F}_{i, \ell}(p_{\max})\\
    &\leq n\log 2-\inf_{X\setminus D_i} \tilde{F}_{i, \ell}.
\end{align*}
For any $p\in X\setminus D_i$, 
\begin{align*}
    \tilde{\phi}_{i, \ell}(p)&=\chi_{i, \ell, \epsilon}(p)-\epsilon \log |s_i|^2_{h_i}(p)\\
    &\leq \chi_{i, \ell, \epsilon}(p_{\max})-\epsilon \log |s_i|^2_{h_i}(p)\\
    &\leq n\log 2-\inf_{X\setminus D_i}\tilde{F}_{i, \ell}+\epsilon \log |s_i|^2_{h_i}(p_{\max})-\epsilon \log |s_i|_{h_i}^2 (p)\\
    &\leq C
\end{align*}
for some constant $C>0$, when letting $\epsilon\to 0$ and using \eqref{Bd for Fil}.
Similarly by considering $\tilde{\chi}_{i, \ell, \epsilon}:=\tilde{\phi}_{i, \ell}-\epsilon \log |s_i|^2_{h_i}$ achieving its minimum on $X\setminus D_i$, we can show a lower bound for $\tilde{\phi}_{i, \ell}$ on $X\setminus D_i$.
\end{proof}

\textbf{Step 3: The Laplacian estimates for $\boldsymbol{\omega_{i, \ell}}$ and $\boldsymbol{\Omega_{\beta_i}}$.}

In this section, we use Chern--Lu's inequality to deduce the lower bound on $\omega_{i, \ell}$ with respect to $\Omega_{\beta_i}$.
\begin{lem}\label{compare omegail with omegabetai}
Assume $\beta\in(0, \frac{1}{2}]^r$. There exists a uniform constant $C>0$, indepent of $\beta_i$, such that for any $i=1,\dots, r$ and all $\ell$, 
\begin{align*}
    \omega_{i, \ell}\geq C\Omega_{\beta_i},
\end{align*}
on $X\setminus \operatorname{supp}(D_i)$.
\end{lem}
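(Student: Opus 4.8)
The plan is to run a Chern--Lu (Aubin--Yau type) second order estimate for the identity map, viewed as a holomorphic map $(X\setminus D_i, \omega_{i,\ell})\to (X\setminus D_i, \Omega_{\beta_i})$, and then feed the resulting differential inequality into a maximum principle argument on $X\setminus D_i$. The two structural inputs required by Chern--Lu are a lower bound on the Ricci curvature of the \emph{domain} metric $\omega_{i,\ell}$ and an upper bound on the holomorphic bisectional curvature of the \emph{target} metric $\Omega_{\beta_i}$, and both must be uniform in $\beta_i$ and $\ell$. The target bound is exactly Lemma \ref{erchong}: since $\Omega_{\beta_i}=\frac1r\omega+\ddbar\psi_{\beta_i}$ is a K\"ahler edge metric with cone singularity along the single smooth divisor $D_i$ and $\frac1r\omega$ is smooth, its holomorphic bisectional curvature is bounded above by a constant $B$ independent of $\beta_i\in(0,\frac12]$. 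This is where the hypothesis $\beta\in(0,\frac12]^r$ is used.

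First I would extract the Ricci lower bound. Applying $-\ddbar\log(\cdot)$ to the Monge--Amp\`ere equation \eqref{Family of MAs} and using the Poincar\'e--Lelong formula away from $D_i$ gives $\Ric(\omega_{i,\ell})=\Ric(\omega)-\ddbar f+\ddbar F_{i,\ell}-(\omega_{i,\ell}-\omega)-(1-\beta_i)\theta_i$. Because the regularizations enter only through $\ddbar F_{i,\ell}\geq -A\omega$ with $A$ uniform, while $f$, $\Ric(\omega)$, $\theta_i$ are fixed and smooth and $0<1-\beta_i<1$, this yields $\Ric(\omega_{i,\ell})\geq -\omega_{i,\ell}-C_0\omega$ with $C_0$ independent of $\beta_i$ and $\ell$. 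Combining with $\omega\leq 2r\,\Omega_{\beta_i}$ (from the argument in Lemma \ref{shouchong}, applied to the single term $\psi_{\beta_i}$) gives the mixed bound $\Ric(\omega_{i,\ell})\geq -\omega_{i,\ell}-C_1\Omega_{\beta_i}$. Inserting this together with $B$ into the Chern--Lu inequality produces, for uniform $A_1,A_2$,
\[
\Delta_{\omega_{i,\ell}}\log \operatorname{tr}_{\omega_{i,\ell}}\Omega_{\beta_i}\geq -A_1-A_2\operatorname{tr}_{\omega_{i,\ell}}\Omega_{\beta_i}.
\]

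Next I would set up the maximum principle with a coercive auxiliary function. Writing $u=\operatorname{tr}_{\omega_{i,\ell}}\Omega_{\beta_i}$, a direct computation from \eqref{Defi of Omegabetai} and \eqref{Family of MAs} gives $\ddbar\tilde{\phi}_{i,\ell}=\omega_{i,\ell}-\Omega_{\beta_i}-(1-\frac1r)\omega$, hence $\Delta_{\omega_{i,\ell}}\tilde{\phi}_{i,\ell}\leq n-u$. Thus $H:=\log u-(A_2+1)\tilde{\phi}_{i,\ell}$ satisfies $\Delta_{\omega_{i,\ell}}H\geq u-C_2$ with $C_2$ uniform, and the coercive term $+u$ is what makes the argument work. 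To guarantee that $H$ attains an interior maximum rather than escaping to $D_i$, I would perturb to $H_\epsilon:=H+\epsilon\log|s_i|^2_{h_i}$; since $\omega_{i,\ell}$ and $\Omega_{\beta_i}$ are edge metrics with the \emph{same} cone angle $2\pi\beta_i$ along $D_i$, $\log u$ is bounded above near $D_i$ for each fixed $\ell$, so $H_\epsilon\to-\infty$ at $D_i$ and the maximum is attained at some $q\in X\setminus D_i$. At $q$ one has $\Delta_{\omega_{i,\ell}}H_\epsilon\leq0$, and since $\Delta_{\omega_{i,\ell}}\log|s_i|^2_{h_i}=-\operatorname{tr}_{\omega_{i,\ell}}\theta_i$ is controlled by $C\,u$, choosing $\epsilon$ small absorbs this error and forces $u(q)\leq C$. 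Feeding this through $H_\epsilon(p)\leq H_\epsilon(q)$, using the uniform bound on $\tilde{\phi}_{i,\ell}$ from Lemma \ref{bound for tilde phi}, and letting $\epsilon\to0$, gives $u\leq C'$ on $X\setminus D_i$ uniformly; an upper bound on $\operatorname{tr}_{\omega_{i,\ell}}\Omega_{\beta_i}$ is equivalent to $\omega_{i,\ell}\geq (C')^{-1}\Omega_{\beta_i}$, which is the claim.

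The main obstacle is keeping every constant uniform in both $\beta_i$ and $\ell$ simultaneously. The genuinely delicate point is the target curvature bound: without $\beta_i\leq\frac12$ the holomorphic bisectional curvature of $\Omega_{\beta_i}$ degenerates, so Lemma \ref{erchong} (equivalently the cone angle threshold) carries the essential weight, whereas the Ricci side is engineered precisely so that the regularization data $F_{i,\ell}$ contribute only via the uniform bound $\ddbar F_{i,\ell}\geq -A\omega$. Care is also needed in the barrier step to ensure that the $\epsilon\log|s_i|^2_{h_i}$ correction is absorbable, which is why the estimate $\omega\leq 2r\,\Omega_{\beta_i}$ is recorded alongside the bisectional curvature bound.
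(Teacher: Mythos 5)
Your proposal is correct and follows essentially the same route as the paper's proof: Chern--Lu for the identity map with target $\Omega_{\beta_i}$, the Ricci lower bound coming from $\ddbar F_{i,\ell}\geq -A\omega$ together with $\omega\leq 2r\,\Omega_{\beta_i}$, the bisectional curvature bound of Lemma \ref{erchong}, and a maximum principle for $\log\operatorname{tr}_{\omega_{i,\ell}}\Omega_{\beta_i}-C\tilde{\phi}_{i,\ell}+\epsilon\log|s_i|^2_{h_i}$ using Lemma \ref{bound for tilde phi} and letting $\epsilon\to 0$. The only differences are expository (you spell out the Ricci identity and the coercivity of the barrier, which the paper leaves implicit), so there is nothing to correct.
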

\begin{proof}
Consider the identity map
\begin{align*}
    \operatorname{id}: (X\setminus D_i, \omega_{i, \ell}=\omega+\ddbar \phi_{i, \ell})\to (X\setminus D_i, \Omega_{\beta_i}).
\end{align*}
Recall that $\ddbar F_{i, \ell}>-A\omega$ and $\omega\leq 2r\Omega_{\beta_i}$ (from Lemma \ref{shouchong}), thus $\ddbar F_{i, \ell}> -2 A r \Omega_{\beta_i}$. Thus, by \eqref{Family of MAs}, $\operatorname{Ric}\omega_{i, \ell}>-\omega_{i, \ell}-C_2 \Omega_{\beta_i}$ for some constant $C_2>0$. From Lemma \ref{erchong}, $|\operatorname{Bisec}_{\Omega_{\beta_i}}|\leq C_3$ for some constant $C_3>0$ when $\beta_i\in(0, \frac{1}{2}]$.
Then by Chern--Lu's inequality \cite[Proposition 7.1]{JMR} (see also \cite[Proposition 7.2]{rubinstein2014smooth}), 
\begin{align}\label{est for laplacian of tromega}
    \Delta_{\omega_{i, \ell}}(\log \operatorname{tr}_{\omega_{i, \ell}}\Omega_{\beta_i}-(C_2+2C_3+1)\tilde{\phi}_{i, \ell})\geq -1-(C_2+2C_3+1)n+\operatorname{tr}_{\omega_{i, \ell}}\Omega_{\beta_i}.
\end{align}
Set for $0<\epsilon\ll 1$,
\begin{align*}
    H_{i, \ell, \epsilon}=\log \operatorname{tr}_{\omega_{i, \ell}}\Omega_{\beta_i}-(C_2+2C_3+1)\tilde{\phi}_{i, \ell}+\epsilon \log |s_i|^2_{h_i},
\end{align*}
then 
\begin{align}
    \Delta_{\omega_{i, \ell}} H_{i, \ell, \epsilon}&=\Delta_{\omega_{i, \ell}}(\log \operatorname{tr}_{\omega_{i, \ell}}\Omega_{\beta_i}-(C_2+2C_3+1)\tilde{\phi}_{i, \ell})-\epsilon \operatorname{tr}_{\omega_{i, \ell}}\theta_i\\
    &\geq \Delta_{\omega_{i, \ell}} (\log \operatorname{tr}_{\omega_{i, \ell}}\Omega_{\beta_i}-(C_2+2C_3+1)\tilde{\phi}_{i, \ell})-\frac{1}{2}\operatorname{tr}_{\omega_{i, \ell}\Omega_{\beta_i}},\label{est for laplacian H}
\end{align}
where the last inequality is true by noting that  $\theta_i\leq M \Omega_{\beta_i}$ for some constant $M>0$ and assuming $\epsilon<\frac{1}{2M}$.

Combine \eqref{est for laplacian of tromega} and \eqref{est for laplacian H},
\begin{align}\label{better est for H}
    \Delta_{\omega_{i, \ell}} H_{i, \ell, \epsilon}\geq \frac{1}{2} \operatorname{tr}_{\omega_{i, \ell}}\Omega_{\beta_i}-C
\end{align}
for some $C>0$. 
$H_{i, \ell, \epsilon}$ achieves its maximum on $X\setminus D_i$, at $q_{\max}$. Then by \eqref{better est for H},
\begin{align*}
    \operatorname{tr}_{\omega_{i, \ell}}\Omega_{\beta_i}(q_{\max})\leq 2C.
\end{align*}
For any $q\in X\setminus D_i$,
\begin{align*}
    \log \operatorname{tr}_{\omega_{i, \ell}}\Omega_{\beta_i}(q)&=H_{i, \ell, \epsilon}(q)+(C_2+2C_3+1) \tilde{\phi}_{i, \ell}(q)-\epsilon \log |s_i|_{h_i}^2 (q)\\
    &\leq H_{i, \ell, \epsilon}(q_{\max})+(C_2+2C_3+1)\tilde{\phi}_{i, \ell}(q)-\epsilon\log |s_i|^2_{h_i}(q)\\
    &\leq 2C- (C_2+2C_3+1)\tilde{\phi}_{i, \ell}(q_{\max})+\epsilon\log |s_i|^2_{h_i}(q_{\max})\\
    &+(C_2+2C_3+1)\tilde{\phi}_{i, \ell}(q)-\epsilon \log |s_i|^2_{h_i}(q)\\
    &\leq \;\text{some constant}\; C,
\end{align*}
where the last inequality is true by Lemma \ref{bound for tilde phi} and letting $\epsilon \to 0$. Hence we have shown
\begin{align}
    \omega_{i, \ell}\geq C\cdot \Omega_{\beta_i},\label{ineq: omegail and omegabetai}
\end{align}
on $X\setminus \operatorname{supp}(D_i)$.
\end{proof}
It remains to show a lower bound for $\omega_{\phi_\beta}$ as in \eqref{ineq: omegail and omegabetai}. By \eqref{ineq: omegail and omegabetai} and the fact that $\omega_{i, \ell}^n$ are uniformly equivalent to $\Omega_{\beta_i}$, $\omega_{i, \ell}$ has uniformly bounded mass away from $\operatorname{supp}(D_i)$. Thus, the Evans--Krylov estimates and  the usual bootstrapping for complex Monge--Amp\`{e}re equations give all derivatives' estimates for $\omega_{i, \ell}$ away from the divisor. Proposition \ref{comp omegaphi and Omegabetai} together with the above discussion show that $\phi_{i, \ell}\to \phi_{\beta}$ in $C^\infty_{\operatorname{loc}}(X\setminus \operatorname{supp}(D_\beta))$. Hence by taking $\ell \to \infty$ in \eqref{ineq: omegail and omegabetai}, we get \eqref{omegaphi g Omegai} pointwise.
\end{proof}

\subsection{Global convergence of \texorpdfstring{$\boldsymbol{\omega_{\phi_\beta}}$}{KE metrics}}\label{sec: glo convergence of kee}

A smooth K\"{a}hler metric $\Omega_{PC}$ on $X\setminus D$ is said to have mixed cusp and edge singularities along a divisor $D$ if whenever $D$ is locally given by $D=\sum_{i=1}^t \{z_i=0\}+\sum_{j=t+1}^m (1-\beta_j) \{z_j=0\}$ with
$t<m\leq n$, $\Omega_{PC}$ is quasi-isometric to the following metric:
\begin{align*}
    \omega_{PC}:=\sum_{i=1}^t \frac{\sqrt{-1}dz_i\wedge d\bar{z}_i}{|z_i|^2\log^2 |z_i|^2}+\sum_{j=t+1}^m \frac{\beta_j^2 \sqrt{-1}dz_j\wedge d\bar{z}_j}{|z_j|^{2(1-\beta_j)}}+\sum_{\ell=m+1}^n \sqrt{-1} dz_\ell\wedge d\bar{z}_\ell.
\end{align*}
In particular, when $t=m$, $\omega_{PC}$ has merely cusp singularities along $D$.\\
In the case $t=m$, it is well known \cite{kobayashi1984kahler,TianYau} that if $K_X+D$ is ample, there exists a unique \KE\;metric on $X\setminus D$ with cusp singularities along $D$. In general, it is shown that if $K_X+D$ is ample, there exists a unique \KE\;metric on $X\setminus D$ with mixed cusp and cone singularities along $D$ \cite[Theorem A]{Gmix}.
As a corollary of Theorem \ref{compare omegaphi and Omegabeta}, we study the global weak convergence and local smooth convergence of the \KE\;crossing edge metrics $\omega_{\phi_\beta}$ to a \KE\;mixed cusp and edge metric on $(X, D_\beta)$ as some of the cone angles tend to $0$.
The first observation is the following lemma.
\begin{lem}\label{convergence of the refer metric}
Assume $\beta_i\to 0,$ for $i=1,\dots, t<r$, and $\beta_j\to d_j\in (0, 1)$ for $j=t+1,\dots, r$,
then $\Omega_\beta$ weakly converges to some \K\;mixed cusp and edge metric $\Omega_{PC}$. Moreover, $\Omega_{\beta}$ converges to $\Omega_{PC}$ in $C^\infty_{\operatorname{loc}}(X\setminus \operatorname{supp}(D_\beta))$.
\end{lem}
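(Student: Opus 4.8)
The plan is to reduce everything to a one-variable analysis of the potentials. Since $\Omega_\beta=\omega+\sum_{i=1}^r\ddbar\psi_{\beta_i}$ with $\psi_{\beta_i}=-\log[(1-|s_i|_{h_i}^{2\beta_i})/\beta_i]^2$, and since $\ddbar$ is continuous for the weak topology of currents, it suffices to understand each $\psi_{\beta_i}$ separately. I would introduce the elementary function $\Phi(\beta,u):=(1-u^{\beta})/\beta$ for $u\in(0,e^{-1})$, so that $\psi_{\beta_i}=-\log\Phi(\beta_i,|s_i|_{h_i}^2)^2$. The only facts required are: as $\beta\to 0$, $\Phi(\beta,u)\to-\log u$ in $C^\infty$ on compact subintervals of $(0,e^{-1})$; and for $\beta_j\to d_j\in(0,1)$, $\Phi(\beta_j,u)\to(1-u^{d_j})/d_j$ in $C^\infty$. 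These yield the pointwise and locally smooth limits $\psi_{\beta_i}\to-\log(\log|s_i|_{h_i}^2)^2$ (a cusp potential) for $i\le t$, and $\psi_{\beta_j}\to-\log[(1-|s_j|_{h_j}^{2d_j})/d_j]^2$ (an edge potential) for $j>t$. I would then define $\Omega_{PC}:=\omega+\sum_{i=1}^t\ddbar(-\log(\log|s_i|_{h_i}^2)^2)+\sum_{j=t+1}^r\ddbar(-\log[(1-|s_j|_{h_j}^{2d_j})/d_j]^2)$, and a local computation exactly parallel to Lemma \ref{shouchong} shows $\Omega_{PC}$ is quasi-isometric to $\omega_{PC}$ near $\operatorname{supp}(D_\beta)$, i.e. it is a \K\;mixed cusp and edge metric.

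For the $C^\infty_{\operatorname{loc}}(X\setminus\operatorname{supp}(D_\beta))$ convergence, on any compact $K\subset X\setminus\operatorname{supp}(D_\beta)$ the smooth function $|s_i|_{h_i}^2$ takes values in a compact subinterval of $(0,e^{-1})$ (using the normalization $\log|s_i|_{h_i}^2+1<0$). By the two limits above, $\Phi(\beta_i,\cdot)$ converges in $C^\infty$ on this interval, and composing with the fixed smooth data $|s_i|_{h_i}^2$ gives $C^\infty(K)$-convergence of each $\psi_{\beta_i}$, hence of $\Omega_\beta\to\Omega_{PC}$.

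For the global weak convergence it suffices to show $\psi_{\beta_i}\to\psi_{\infty,i}$ in $L^1(X)$. On the edge components ($j>t$) the potential $\psi_{\beta_j}$ stays uniformly bounded for $\beta_j$ near $d_j>0$, so Lebesgue dominated convergence applies immediately. The delicate case is a cusp component ($i\le t$), where the additive constant $2\log\beta_i$ in $\psi_{\beta_i}$ diverges to $-\infty$. Writing $x:=-\log|s_i|_{h_i}^2>1$, the inequality $1-e^{-\beta_i x}\le\beta_i x$ gives the uniform integrable lower bound $\psi_{\beta_i}\ge-\log(\log|s_i|_{h_i}^2)^2=\psi_{\infty,i}$. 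For the upper bound I would split $X$ into $\{x\le 1/\beta_i\}$ and the thin tube $\{x>1/\beta_i\}=\{|s_i|_{h_i}^2<e^{-1/\beta_i}\}$: on the former, $1-e^{-\beta_i x}\ge\beta_i x/2$ gives $\psi_{\beta_i}\le-\log(x/2)^2$, a $\beta_i$-independent integrable dominating bound, so dominated convergence handles this region; on the tube one only has $|\psi_{\beta_i}|\lesssim|\log\beta_i|$ and $\int_{\text{tube}}|\psi_{\infty,i}|\,\omega^n\to 0$, but the tube has $\omega^n$-volume $\lesssim e^{-1/\beta_i}$, so its total contribution to $\|\psi_{\beta_i}-\psi_{\infty,i}\|_{L^1}$ is $\lesssim|\log\beta_i|\,e^{-1/\beta_i}+o(1)\to 0$. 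Combining the three regions gives $\psi_{\beta_i}\to\psi_{\infty,i}$ in $L^1$, and applying $\ddbar$ yields $\Omega_\beta\to\Omega_{PC}$ weakly.

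The main obstacle is precisely this last $L^1$ estimate for the cusp components: because the normalizing constant $2\log\beta_i$ blows up as $\beta_i\to 0$, no single integrable function dominates $\psi_{\beta_i}$ all the way up to $D_i$, and one must exploit the exponential collapse of the tube $\{|s_i|_{h_i}^2<e^{-1/\beta_i}\}$ to absorb the logarithmically divergent potential there. The $C^\infty_{\operatorname{loc}}$ statement and the edge components are comparatively routine once the one-variable $C^\infty$ limits of $\Phi$ have been recorded.
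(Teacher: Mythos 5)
Your proposal is correct and takes essentially the same approach as the paper: both arguments reduce to showing that each potential $\psi_{\beta_i}$ converges in $L^1(X,\omega)$ and in $C^\infty_{\operatorname{loc}}(X\setminus \operatorname{supp}(D_\beta))$ to the cusp potential $-\log\log^2|s_i|^2_{h_i}$ (for $i\leq t$) or the edge potential with angle $d_j$ (for $j>t$), and then apply $\ddbar$. The only differences are in which details are spelled out: you supply the tube-splitting $L^1$ estimate for the cusp components, which the paper asserts without proof, whereas the paper carries out explicitly the computation of $\ddbar \log\log^2|s_i|^2_{h_i}$ verifying the cusp structure of $\Omega_{PC}$, a step you defer to a computation parallel to Lemma \ref{shouchong}.
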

\begin{proof}
Recall the definition of $\Omega_\beta$. Note that $\displaystyle\log\left[({1-|s_i|_{h_i}^{2\beta_i})}/{\beta_i} \right]^2$ converges to $\log\log^2 |s_i|_{h_i}^2$ in $L^1(X, \omega)$ and $C^\infty_{\operatorname{loc}}(X\setminus \operatorname{supp}(D_\beta))$ as $\beta_i\to 0$ for each $i=1,\dots, t$. Thus $\Omega_\beta$ converges to 
\begin{align*}
    \Omega_{PC}:=\omega-\sum_{i=1}^t \ddbar \log\log^2|s_i|^2_{h_i}-\sum_{j=t+1}^r \ddbar \log\left[\frac{1-|s_j|_{h_j}^{2d_j}}{d_j} \right]^2
\end{align*}
in $C^\infty_{\operatorname{loc}}(X\setminus \operatorname{supp}(D_\beta))$ sense and weakly in the sense of currents. It remains to show that $\Omega_{PC}$ has mixed cusp and edge singularities along $D_\beta$. To see this, recall we denote by $\theta_i$ the Chern curvature form of $(L_{D_i}, h_i)$ for each $i$, then by \eqref{eq: forms related to s norm} and \eqref{eq: D10s and curv}, we calculate that
\begin{align*}
    &\sum_{i=1}^t \ddbar \log\log^2|s_i|^2_{h_i}\\
    =&\sum_{i=1}^t 2\sqrt{-1}\cdot \frac{(\partial\bar{\partial}|s_i|^2_{h_i})(\log |s_i|^2_{h_i})(|s_i|^2_{h_i})-\partial (\log|s_i|^2_{h_i}|s_i|^2_{h_i})\bar{\partial}|s_i|^2_{h_i}}{(\log|s_i|_{h_i}^2)^2(|s_i|_{h_i}^2)^2}\\
    =&\sum_{i=1}^r \frac{2\sqrt{-1}\langle D^{1, 0}s_i, D^{1, 0}s_i\rangle}{\log^2|s_i|^2_{h_i}|s_i|^2_{h_i}}+\frac{2}{\log|s_i|_{h_i}^2}\theta_i.
\end{align*}
Thus, $\Omega_{PC}$ has cusp singularities along $D_i$ for $i=1,\dots, t$. The result follows.
\end{proof}

\begin{thm}\label{rmk on weak convergence}
The \KE\;crossing edge metric $\omega_{\phi_\beta}$ converges to the \KE\;mixed cusp and edge metric on $(X, D_\beta)$ globally in a weak sense and locally in a strong sense when $\beta_i\to 0$ for $i=1,\dots, t<r$ and $\beta_j\to d_j\in (0, 1)$ for $j=t+1,\dots, r$.
\end{thm}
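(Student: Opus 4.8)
The plan is to deduce everything from the uniform two-sided estimate of Theorem \ref{compare omegaphi and Omegabeta}, the convergence of the reference metric in Lemma \ref{convergence of the refer metric}, and the uniqueness of the \KE\;mixed cusp and edge metric $\omega_0$ established in \cite[Theorem A]{Gmix}. The first step is to rewrite the Monge--Amp\`ere equation \eqref{MA eq} relative to the reference potential. Setting $u_\beta:=\phi_\beta-\sum_{i=1}^r\psi_{\beta_i}$, so that $\omega_{\phi_\beta}=\Omega_\beta+\ddbar u_\beta$, the multi-divisor analogue of the volume identity $\Omega_\beta^n=e^{O(1)}\prod_{i=1}^r\frac{\beta_i^2}{|s_i|_{h_i}^{2(1-\beta_i)}(1-|s_i|_{h_i}^{2\beta_i})^2}\,\omega^n$ from the proof of Proposition \ref{comp omegaphi and Omegabetai} shows that $u_\beta$ solves $(\Omega_\beta+\ddbar u_\beta)^n=e^{u_\beta+G_\beta}\,\Omega_\beta^n$ with $\|G_\beta\|_{C^0(X)}$ bounded uniformly in $\beta$. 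Running the maximum principle on this equation, and using the $\epsilon\log|s_i|_{h_i}^2$ barrier exactly as in Lemma \ref{bound for tilde phi} to push the extrema off $\operatorname{supp}(D_\beta)$, I would obtain a uniform bound $\|u_\beta\|_{C^0(X)}\le C$.

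For the local strong convergence I would fix $K\Subset X\setminus\operatorname{supp}(D_\beta)$. By Lemma \ref{convergence of the refer metric} the $\Omega_\beta$ converge in $C^\infty(K)$ to the fixed smooth metric $\Omega_{PC}$ and are thus uniformly equivalent on $K$; with Theorem \ref{compare omegaphi and Omegabeta} this makes $\omega_{\phi_\beta}$ uniformly equivalent to a fixed smooth metric on $K$, hence uniformly elliptic. The uniform $C^0$ bound on $u_\beta$ and the $C^\infty(K)$ convergence of $\sum_i\psi_{\beta_i}$ bound $\phi_\beta$ in $C^0(K)$, and since the right-hand side of \eqref{MA eq} is smooth and convergent on $K$ (the $s_i$ being bounded away from $0$ there), Evans--Krylov together with Schauder bootstrapping give uniform $C^k(K')$ bounds for every $k$ on $K'\Subset\operatorname{int}K$. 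Arzel\`a--Ascoli then extracts from any sequence of parameters tending to the prescribed limit a subsequence with $\phi_\beta\to\phi_\infty$ in $C^\infty_{\operatorname{loc}}(X\setminus\operatorname{supp}(D_\beta))$, and letting $\beta\to\beta_\infty$ in \eqref{MA eq} shows $\Ric\omega_{\phi_\infty}=-\omega_{\phi_\infty}+\sum_{i=1}^t[D_i]+\sum_{j=t+1}^r(1-d_j)[D_j]$.

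To identify the limit I would pass the inequality $C^{-1}\Omega_\beta\le\omega_{\phi_\beta}\le C\Omega_\beta$ to the limit, obtaining $C^{-1}\Omega_{PC}\le\omega_{\phi_\infty}\le C\Omega_{PC}$; since $\Omega_{PC}$ carries mixed cusp and edge singularities by Lemma \ref{convergence of the refer metric}, $\omega_{\phi_\infty}$ is a \KE\;mixed cusp and edge metric for $(X,\sum_{i=1}^tD_i+\sum_{j=t+1}^r(1-d_j)D_j)$, and \cite[Theorem A]{Gmix} identifies it with $\omega_0$ independently of the subsequence, so the whole family converges in $C^\infty_{\operatorname{loc}}(X\setminus\operatorname{supp}(D_\beta))$. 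Global weak convergence follows from $\phi_\beta=u_\beta+\sum_i\psi_{\beta_i}$: the uniform $C^0$ bound on $u_\beta$ and its pointwise convergence give $u_\beta\to u_\infty$ in $L^1(X)$ by dominated convergence, while $\sum_i\psi_{\beta_i}$ converges in $L^1(X)$ by Lemma \ref{convergence of the refer metric}, so $\phi_\beta\to\phi_0$ in $L^1(X)$ and therefore $\omega_{\phi_\beta}\to\omega_0$ weakly as currents. I expect the delicate point to be this last identification: it is essential that the constant $C$ in Theorem \ref{compare omegaphi and Omegabeta} be uniform in $\beta$, since only then does the two-sided bound survive the limit and force $\omega_{\phi_\infty}$ to have exactly the cusp-along-$D_i$, edge-along-$D_j$ singularity structure needed to invoke uniqueness---controlling the limit near $\operatorname{supp}(D_\beta)$, rather than only on compacta away from it, is the crux.
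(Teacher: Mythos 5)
Your proposal is correct and follows essentially the same path as the paper's proof: the uniform two-sided estimate of Theorem \ref{compare omegaphi and Omegabeta}, a barrier maximum-principle argument for the uniform $C^0$ bound (the paper reuses Lemma \ref{bound for tilde phi} verbatim, which is exactly your $u_\beta$ argument), Evans--Krylov plus bootstrapping and Arzel\`a--Ascoli for $C^\infty_{\operatorname{loc}}$ convergence, and identification of any limit via its mixed cusp and edge singularity structure coming from Lemma \ref{convergence of the refer metric}. The only cosmetic differences are that the paper deduces global weak convergence from the uniform mass bound and weak compactness of currents rather than your dominated-convergence argument on potentials, and it identifies the limit with the unique K\"ahler--Einstein mixed cusp and edge metric by invoking Yau's generalized maximum principle \cite{yau1975harmonic}, whereas you cite the uniqueness statement of \cite[Theorem A]{Gmix} directly --- both are equivalent ways of concluding.
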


\begin{proof}
By Theorem \ref{compare omegaphi and Omegabeta}, the family of $\omega_{\phi_\beta}$ has uniformly bounded mass. Thus, the family of $\omega_{\phi_\beta}$ is relatively compact in the weak topology. The same arguments in the proof of Lemma \ref{bound for tilde phi} and in the end of the proof of Proposition \ref{comp omegaphi and Omegabetai}  give respectively the $C^0$-estimate and all higher-order estimates for the family of $\omega_{\phi_\beta}$. Therefore, any weak limit $\omega_0$ is smooth on $X\setminus \operatorname{supp}(D_\beta)$ and this $C^\infty_{\operatorname{loc}}$-convergence indicates that such $\omega_0$ is \KE\;outside $D_\beta$. Lemma \ref{convergence of the refer metric} shows any such $\omega_0$ also admits mixed cusp and edge singularities along $D_\beta$. Thus, by Yau's generalized maximum principle \cite{yau1975harmonic}, all such $\omega_0$ coincides with the unique \KE\;metric on $X\setminus \operatorname{supp}(D_\beta)$ with mixed cusp and cone singularities along $D_\beta$. Hence we have shown the locally strong and globally weak convergence of $\omega_{\phi_\beta}$ to a \KE\;mixed cusp and edge metric as $\beta_i\to 0$ for $i=1,\dots, t$ and $\beta_j\to d_j$ for $j=t+1,\dots, r$.
\end{proof}
\section{Asymptotic behavior near the divisors in the small angle limit}\label{sec: asm behavior near D}

Theorem \ref{rmk on weak convergence} only gives us the smooth convergence of $\omega_{\phi_\beta}$ to a \KE\;mixed cusp and edge metric away from the divisor when cone angles approach $0$. In this section, we study the asymptotic behavior of $\omega_{\phi_\beta}$ near $D$ when some of the cone angles tend to $0$. More precisely, consider a fixed point $p\in D_\beta$ with a holomorphic coordinate chart $(U, \{z_i\}_{i=1}^n)$ centered at $p$ such that $D_\beta \cap U=\{z_1\cdots z_m=0\}$, for $m\leq n$ and $D_j\cap U=\{z_j=0\}$ for $j=1,\dots, m$. Let $\beta_i$ denote the cone angle along $D_i$ for each $i$. From now on, assume $\beta_1\leq \beta_2\leq\cdots \leq \beta_m$. We allow other cone angles to tend to $0$, but we always assume that $\beta_1$ goes to $0$ in the fastest speed, i.e., $\beta_1/\beta_i \nrightarrow +\infty$, for $i=2,\dots, m$. 

\subsection{A small neighborhood of \texorpdfstring{$\boldsymbol{D_\beta}$}{D}}\label{sec: nbhd of d}
By choosing an appropriate coordinate system \cite[Lemma 4.1]{CGP}, whenever $D_\beta$ is locally given by $\{z_1 \cdots z_m=0\}$, the reference metric $\Omega_\beta$ is equivalent to the following metric:
\begin{align}\label{model metric with triv. metric}
    \omega_{\beta, \operatorname{mod}}:=\sum_{i=1}^m \frac{\beta_i^2 \sqrt{-1} dz_i\wedge d\bar{z}_i}{|z_i|^{2(1-\beta_i)}(1-|z_i|^{2\beta_i})^2}+\sum_{j=m+1}^n \sqrt{-1} dz_j\wedge d\bar{z}_j.
\end{align}
Thus, Theorem \ref{compare omegaphi and Omegabeta} tells us on $X\setminus \operatorname{supp}(D_\beta)$, there exists a uniform constant $C>0$ such that 
\begin{align}
    C^{-1}\omega_{\beta, \operatorname{mod}}\leq \omega_{\phi_\beta}\leq C\omega_{\beta, \operatorname{mod}}.\label{eq:equivalence of kee and model metric}
\end{align}
Thanks to \eqref{eq:equivalence of kee and model metric}, it is enough to consider $((\mathbb{C}^*)^m\times \mathbb{C}^{n-m}, \omega_{\beta, \operatorname{mod}})$ when dealing with a small neighborhood of $D_\beta$ under the metric $\omega_{\phi_\beta}$. Let us fix a point $p\in D_\beta$. Let $(U, z_1,\dots, z_n)$ be a holomorphic coordinate chart centered at $p$, such that $U\cap D_\beta=\{z_1\cdots z_m=0\}$ and $U\cap D_i=\{z_i=0\}$ for $i=1,\dots, m$. Let $\mathbb{D}:=\{|z_i|\leq 1, i=1,\dots, n\}$ be the unit polydisk. Then we claim that the distance function $d_\beta$ induced by the completion of $\omega_{\beta, \operatorname{mod}}$ on $\mathbb{D}$ satisfies
\begin{align}
    d_\beta(0, z)\simeq \sum_{i=1}^m \frac{1}{2}\log \left(\frac{1+|z_i|^{\beta_i}}{1-|z_i|^{\beta_i}} \right)+\sum_{j=m+1}^n |z_j|,\quad z\in\mathbb{D},\label{eq:distance function}
\end{align}
where "$\simeq$" means "is equivalent up to a constant independent of $z$ to". Indeed, $\displaystyle\frac{1}{2}\log \left(\frac{1+x^{\beta_i}}{1-x^{\beta_i}} \right)$ is the primitive of $\displaystyle\frac{\beta_i}{x^{1-\beta_i}(1-x^{2\beta_i})}$, and \eqref{eq:distance function} follows from this fact and \eqref{model metric with triv. metric}. Summarizing the discussions above, it is enough to study the polydisk in $\mathbb{C}^n$
\begin{align*}
    \left\{|z_i|^{\beta_i}< \frac{1-e^{-2a}}{1+e^{-2a}}, i=1,\dots, m, z_j<a, j=m+1, \dots, n\right\},\quad a>0,
\end{align*}
when we study a neighborhood of $D_\beta$ given by the geodesic ball $B_{\omega_{\phi_\beta}}(p, a)$ centerd at $p$ of radius $a$ with respect to the metric $\omega_{\phi_\beta}$.

\subsection{The mixed cylinder and edge metric}
In this section, we focus on a small neighborhood of $D_\beta$ and show that in a neighborhood of $D_\beta$, a renormalization of $\omega_{\beta, \operatorname{mod}}$ locally converges to a mixed cylinder and edge metric (see Definition \ref{def of mix} below) in the $C^\infty$-sense.

\begin{defi}\label{def of mix}
A K\"{a}hler metric $\tilde{\omega}$ on $(\mathbb{C}^*)^m\times \mathbb{C}^{n-m}$ is called a mixed cylinder and edge metric if $\tilde{\omega}$ is quasi-isometric to the following metric:
\begin{align*}
    \omega_{\operatorname{mix}}:=\sum_{i=1}^t \frac{\sqrt{-1} dz_i\wedge d\bar{z}_i}{|z_i|^2}+\sum_{j=t+1}^m \frac{\beta_j^2 \sqrt{-1} dz_j\wedge d\bar{z}_j}{|z_j|^{2(1-\beta_j)}}+\sum_{\ell=m+1}^n \sqrt{-1} dz_\ell\wedge d\bar{z}_{\ell},
\end{align*}
where $\beta_j\in(0, 1)$ for $j=t+1,\dots, m$.
\end{defi}

Denote by
\begin{align*}
    \mathbb{D}(a_1,\dots, a_m, b):=\{z\in (\mathbb{C}^*)^m\times \mathbb{C}^{n-m}: |z_i|<a_i, i=1,\dots, m, |z_j|<b, j=m+1,\dots, n\}.
\end{align*}
Let 
\begin{align*}
    U_{\beta}:=\mathbb{D}\left(e^{-\frac{1}{2\beta_1}}, \left(\frac{\beta_1}{\beta_2}\right)^{\frac{1}{2\beta_2}}, \dots, \left(\frac{\beta_1}{\beta_m}\right)^{\frac{1}{2\beta_m}}, 1\right).
\end{align*}
From section \ref{sec: nbhd of d}, one realizes $U_\beta$ as a neighborhood of $D_\beta$. We endow $U_\beta$ with $\displaystyle\frac{1}{\beta_1^2} \omega_{\beta, \operatorname{mod}}$. Define a map 
\begin{align*}
    \Psi_\beta: \mathbb{D}\left(e^{\frac{1}{2\beta_1}}, \left(\frac{\beta_2}{\beta_1}\right)^{\frac{1}{2\beta_2}}, \dots, \left(\frac{\beta_m}{\beta_1}\right)^{\frac{1}{2\beta_m}}, \frac{1}{\beta_1}\right)\to U_\beta=\mathbb{D}\left(e^{-\frac{1}{2\beta_1}}, \left(\frac{\beta_1}{\beta_2}\right)^{\frac{1}{2\beta_2}}, \dots, \left(\frac{\beta_1}{\beta_m}\right)^{\frac{1}{2\beta_m}}, 1\right)\\
    (w_1, \dots, w_m, w_{m+1}, \dots, w_n)\mapsto \left(e^{-\frac{1}{\beta_1}}w_1, \left(\frac{\beta_1}{\beta_2}\right)^{\frac{1}{\beta_2}}w_2,\dots, \left(\frac{\beta_1}{\beta_m}\right)^{\frac{1}{\beta_m}}w_m, \beta_1 w_{m+1}, \dots, \beta_1 w_n\right).
\end{align*}
On $\Psi_\beta^{-1}(U_\beta)$, the pull-back metric reads
\begin{align}
    \Psi_\beta^* (\frac{1}{\beta_1^2}\omegamod)&=\frac{e^{-2}|w_1|^{2\beta_1}}{(1-e^{-2}|w_1|^{2\beta_1})^2}\cdot \frac{\sqrt{-1} dw_1\wedge d\bar{w}_1}{|w_1|^2}+\sum_{i=2}^m \frac{\sqrt{-1} dw_i \wedge d\bar{w}_i}{|w_i|^{2(1-\beta_i)}(1-\frac{\beta_1^2}{\beta_i^2}|w_i|^{2\beta_i})^2}\\
    &+\sum_{j=m+1}^n \sqrt{-1} dw_j \wedge d\bar{w}_j.\label{eq: pullback of normalized model metric}
\end{align}
Note that for $(w_1, \dots, w_m, w_{m+1}, \dots, w_n)\in (\mathbb{C}^*)^m\times \mathbb{C}^{n-m}$, $|w_1|^{2\beta_1}\to 1$ as $\beta_1\to 0$ and $\frac{\beta_1^2}{\beta_i^2}|w_i|^{2\beta_i}\to 0$ as $\frac{\beta_1}{\beta_i}\to 0$. Moreover, for any compact subset $K\subset (\mathbb{C}^*)^m\times \mathbb{C}^{n-m}$, when $\beta_1$ is small enough, $K\subset \Psi_\beta^{-1}(U_\beta)$. Hence we have indeed shown the following result.

\begin{lem}\label{convergence of the pull-back metric}
The pull-back of $\frac{1}{\beta_1^2}\omega_{\beta, \operatorname{mod}}$ by $\Psi_\beta$ on any compact subset $K\subset (\mathbb{C}^*)^m\times \mathbb{C}^{n-m}$ converges to a mixed cylindrical and conical metric in $C^\infty(K)$ when $\beta_1\to 0$ and $\beta_i$ does not converge to $0$ for each $i=2,\dots, m$.
\end{lem}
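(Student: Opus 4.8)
The plan is to read the limit off directly from the explicit formula for $\Psi_\beta^*(\tfrac{1}{\beta_1^2}\omega_{\beta,\operatorname{mod}})$ displayed in \eqref{eq: pullback of normalized model metric}. Since that pulled-back metric is already diagonal in the coframe $\{dw_i\wedge d\bar w_i\}$, it suffices to track the limit of each diagonal coefficient separately on a fixed compact set $K$. First I would record that the domain $\Psi_\beta^{-1}(U_\beta)$ exhausts $(\mathbb{C}^*)^m\times\mathbb{C}^{n-m}$: each radius $e^{1/2\beta_1}$, $(\beta_i/\beta_1)^{1/2\beta_i}$, and $1/\beta_1$ tends to $+\infty$ as $\beta_1\to 0$ (using that $\beta_i$ stays bounded away from $0$, so $\beta_i/\beta_1\to+\infty$ while the exponent $1/2\beta_i$ stays bounded), so any fixed compact $K$ lies in $\Psi_\beta^{-1}(U_\beta)$ once $\beta_1$ is small enough, and the pulled-back tensor is genuinely defined on $K$.

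Next I would take limits of the three blocks. For the $w_1$-block the coefficient is $e^{-2}|w_1|^{2\beta_1}/(1-e^{-2}|w_1|^{2\beta_1})^2\cdot|w_1|^{-2}$; since $|w_1|^{2\beta_1}\to 1$ on $K$, this converges to the positive constant $e^{-2}/(1-e^{-2})^2$ times the cylindrical coefficient $|w_1|^{-2}$. For $i=2,\dots,m$ the factor $\beta_1^2/\beta_i^2$ tends to $0$ while $|w_i|^{2\beta_i}$ stays bounded on $K$, so $(1-\tfrac{\beta_1^2}{\beta_i^2}|w_i|^{2\beta_i})^2\to 1$ and the coefficient behaves like $|w_i|^{-2(1-\beta_i)}$; after passing to a subsequence along which $\beta_i\to d_i$ (possible because the hypothesis that $\beta_i$ does not tend to $0$ produces a subsequence bounded away from $0$ inside the compact set $[0,\tfrac12]$, so $d_i\in(0,\tfrac12]$), this converges to $|w_i|^{-2(1-d_i)}$. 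The last block $\sum_{j=m+1}^n\sqrt{-1}dw_j\wedge d\bar w_j$ is independent of $\beta$.

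To upgrade pointwise convergence to convergence in $C^\infty(K)$, I would note that each coefficient is a smooth function of $w$ depending continuously on the parameters $(\beta_1,\dots,\beta_m)$, and that the elementary building blocks $|w_i|^{2\beta}$, together with all their $w$-derivatives, are uniformly bounded on $K$ and depend continuously on $\beta$; hence the coefficients and all their derivatives converge uniformly on $K$, giving $C^\infty(K)$ convergence of the tensors. Finally I would identify the limit
$$\omega_\infty=\frac{e^{-2}}{(1-e^{-2})^2}\frac{\sqrt{-1}dw_1\wedge d\bar w_1}{|w_1|^2}+\sum_{i=2}^m\frac{\sqrt{-1}dw_i\wedge d\bar w_i}{|w_i|^{2(1-d_i)}}+\sum_{j=m+1}^n\sqrt{-1}dw_j\wedge d\bar w_j$$
as a mixed cylinder and edge metric in the sense of Definition \ref{def of mix} with $t=1$: the $w_1$-term is a constant multiple of the cylindrical metric (which becomes flat under $w_1=e^{\zeta_1}$), the $w_i$-terms are quasi-isometric to cone metrics of angle $2\pi d_i$, and the remaining block is Euclidean. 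The constants in front of the cylindrical and conical pieces, as well as the absence of the $d_i^2$ factor present in $\omega_{\operatorname{mix}}$, are harmless because Definition \ref{def of mix} asks only for quasi-isometry.

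The main obstacle I expect is bookkeeping rather than analytic: the hypothesis forbids $\beta_i\to 0$ for $i\ge 2$ but does not force $\beta_i$ to converge, so the conical coefficients $|w_i|^{-2(1-\beta_i)}$ need not stabilize and one must pass to a subsequence with $\beta_i\to d_i$ to obtain a genuine limit metric. One must also state the convergence as $C^\infty$ on compacta of $(\mathbb{C}^*)^m\times\mathbb{C}^{n-m}$ rather than on the whole space, since the cylindrical coefficient degenerates near $\{w_1=0\}$ and the conical coefficients degenerate near $\{w_i=0\}$; this is precisely why $K$ is taken compact in the interior.
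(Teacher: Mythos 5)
Your proposal is correct and follows essentially the same route as the paper: both read the limit off the explicit diagonal formula \eqref{eq: pullback of normalized model metric}, observe that any fixed compact $K$ lies in $\Psi_\beta^{-1}(U_\beta)$ once $\beta_1$ is small, take the limit of each diagonal coefficient, and identify the limit as a mixed cylinder and edge metric via Definition \ref{def of mix}. Your extra care in passing to a subsequence with $\beta_i\to d_i$ (since the hypothesis only excludes $\beta_i\to 0$, not non-convergence of $\beta_i$) is a point the paper's proof glosses over, as its limit formula keeps $\beta_i$ in place and implicitly assumes the remaining angles converge.
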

\begin{proof}
Summarizing the discussions above, $\Psi_\beta^*(\frac{1}{\beta_1^2}\omegamod)$ converges to
\begin{align*}
    \frac{e^{-2}}{(1-e^{-2})^2}\cdot \frac{\sqrt{-1} d w_1 \wedge d\bar{w}_1}{|w_1|^2}+\sum_{i=2}^m \frac{\sqrt{-1} d w_i \wedge d\bar{w}_i}{|w_i|^{2(1-\beta_i)}}+\sum_{j=m+1}^n \sqrt{-1} dw_j \wedge d\bar{w}_j=: \hat{\omega},
\end{align*}
which is a mixed cylinder and edge metric by Definition \ref{def of mix}, in $C^\infty (K)$ as $\beta_1\to 0$ and $\frac{\beta_1}{\beta_i}\to 0, \forall i=2, \dots, m$.
\end{proof}

\subsection{The convergence of renormalized \texorpdfstring{$\boldsymbol{\omega_{\phi_\beta}}$}{KE metrics} near \texorpdfstring{$\boldsymbol{D_\beta}$}{D}}

For a K\"{a}hler metric $\xi$ on $\mathbb{C}^n$, let us denote $\displaystyle\bar \xi:=\Psi_\beta^*(\frac{1}{\beta_1^2}\xi)$.

\begin{thm}\label{thm: convergence of kee near divisor}
Let $\{\beta_{1, k}\}_{k\in\mathbb{N}}$ be a sequence of positive numbers converging to $0$. Assume further that $\lim_{k\to \infty}\beta_{i, k}>0$ for each $i = 2,\dots, r$. Assume all $\beta_{i, k}\in (0, \frac{1}{2}]$. Let ${\omega}_{\phi_{\beta_k}}$ be the (negatively curved) \KE\;crossing edge metric on $(X, D_k=\sum_{i=1}^r (1-\beta_{i, k})D_i)$.  Then there exists a subsequence of the metric spaces $\left(U_{\beta_k}, \frac{1}{\beta_{1, k}^2}\omega_{\phi_{\beta_k}}\right)$ which converges in pointed Gromov-Hausdorff topology to $((\mathbb{C}^*)^m\times \mathbb{C}^{n-m}, \bar{\omega}_\infty)$, where $\bar{\omega}_\infty$ is a mixed cylindrical and conical metric. Indeed, a subsequence of $\bar{\omega}_{\phi_{\beta_k}}$ converges in $C^\infty_{\operatorname{loc}}((\mathbb{C}^*)^m\times \mathbb{C}^{n-m})$-topology to $\bar{\omega}_\infty$.  
\end{thm}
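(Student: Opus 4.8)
The plan is to transport the problem to the fixed manifold $(\mathbb{C}^*)^m\times\mathbb{C}^{n-m}$ through the biholomorphisms $\Psi_{\beta_k}$, where the uniform equivalence already proved in Theorem \ref{compare omegaphi and Omegabeta} and the explicit limit of Lemma \ref{convergence of the pull-back metric} supply uniform two-sided bounds, and then to run interior elliptic estimates for the rescaled K\"ahler--Einstein equation. First I would record the bounds: applying $\Psi_{\beta_k}^*\bigl(\tfrac{1}{\beta_{1,k}^2}\,\cdot\,\bigr)$ to \eqref{eq:equivalence of kee and model metric} gives, with the \emph{same} uniform constant $C$,
\[
C^{-1}\,\overline{\omega_{\beta_k,\operatorname{mod}}}\leq \bar{\omega}_{\phi_{\beta_k}}\leq C\,\overline{\omega_{\beta_k,\operatorname{mod}}}
\]
on $\Psi_{\beta_k}^{-1}(U_{\beta_k})$. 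By Lemma \ref{convergence of the pull-back metric}, $\overline{\omega_{\beta_k,\operatorname{mod}}}\to\hat{\omega}$ in $C^\infty(K)$ for every compact $K\subset(\mathbb{C}^*)^m\times\mathbb{C}^{n-m}$, and $\Psi_{\beta_k}^{-1}(U_{\beta_k})\supset K$ once $k$ is large (the domains exhaust the space since $\beta_{1,k}\to0$ while $\beta_{i,k}\not\to0$). Hence $\bar{\omega}_{\phi_{\beta_k}}$ is, for large $k$, uniformly equivalent to the fixed metric $\hat{\omega}$ on each $K$; this is the uniform ellipticity (and Laplacian bound) that drives the rest.

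Next I would set up and bootstrap the equation. On $(\mathbb{C}^*)^m\times\mathbb{C}^{n-m}$ the pullback is free of the divisor, and since $\Psi_{\beta_k}$ is biholomorphic and the rescaling is by a constant, $\bar{\omega}_{\phi_{\beta_k}}$ satisfies the Einstein equation $\Ric\bar{\omega}_{\phi_{\beta_k}}=-\beta_{1,k}^2\,\bar{\omega}_{\phi_{\beta_k}}$. Because $\hat{\omega}$ is a product of flat cylinder, cone, and Euclidean factors we have $\Ric\hat{\omega}=0$, so on a fixed coordinate ball $B\Subset(\mathbb{C}^*)^m\times\mathbb{C}^{n-m}$ I would write $\bar{\omega}_{\phi_{\beta_k}}=\hat{\omega}+\ddbar u_k$ and read off the complex Monge--Amp\`ere equation $(\hat{\omega}+\ddbar u_k)^n=e^{\psi_k}\hat{\omega}^n$ together with $\ddbar\psi_k=\beta_{1,k}^2(\hat{\omega}+\ddbar u_k)$. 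The uniform equivalence to $\hat{\omega}$ bounds $\psi_k$ in $L^\infty$ and, after normalizing, bounds $u_k$ in $C^0$, making the equation uniformly elliptic with uniformly controlled data; note the Einstein constant $-\beta_{1,k}^2\to0$ only improves the right-hand side. A standard application of the Evans--Krylov theorem then gives interior $C^{2,\alpha}$-bounds, and the usual Schauder bootstrapping for complex Monge--Amp\`ere equations upgrades these to uniform $C^{k,\alpha}(K)$-bounds for all $k$. Arzel\`a--Ascoli and a diagonal argument over an exhaustion by compacta extract a subsequence with $\bar{\omega}_{\phi_{\beta_k}}\to\bar{\omega}_\infty$ in $C^\infty_{\operatorname{loc}}$.

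Finally I would identify the limit and conclude the geometric convergence. Passing to the limit in the Einstein equation, and using $\beta_{1,k}\to0$ with the uniform bound on $\bar{\omega}_{\phi_{\beta_k}}$, yields $\Ric\bar{\omega}_\infty=0$. Taking the pointwise limit of the displayed two-sided inequality (legitimate with the uniform constant $C$, since both $\overline{\omega_{\beta_k,\operatorname{mod}}}\to\hat{\omega}$ and $\bar{\omega}_{\phi_{\beta_k}}\to\bar{\omega}_\infty$ locally smoothly) gives $C^{-1}\hat{\omega}\leq\bar{\omega}_\infty\leq C\hat{\omega}$ on all of $(\mathbb{C}^*)^m\times\mathbb{C}^{n-m}$; since $\hat{\omega}$ is quasi-isometric to $\omega_{\operatorname{mix}}$, so is $\bar{\omega}_\infty$, i.e.\ $\bar{\omega}_\infty$ is a mixed cylinder and edge metric in the sense of Definition \ref{def of mix}. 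Because the maps $\Psi_{\beta_k}$ are isometries between $(U_{\beta_k},\tfrac{1}{\beta_{1,k}^2}\omega_{\phi_{\beta_k}})$ and $(\Psi_{\beta_k}^{-1}(U_{\beta_k}),\bar{\omega}_{\phi_{\beta_k}})$, the domains exhaust the limit space, and the metrics converge in $C^\infty_{\operatorname{loc}}$ while staying uniformly equivalent to $\hat{\omega}$, the induced distance functions converge uniformly on compacta; fixing a base point and its $\Psi_{\beta_k}$-images promotes the smooth convergence to pointed Gromov--Hausdorff convergence of the metric completions.

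I expect the main obstacle to be the second step: verifying that the hypotheses of the interior estimates hold \emph{uniformly in $k$} while $\beta_{1,k}\to0$ and the domains $\Psi_{\beta_k}^{-1}(U_{\beta_k})$ blow up. Concretely, one must ensure that the $C^0$-bound on the local potential $u_k$, the bound on $\psi_k$, and the control of the coupled Monge--Amp\`ere data remain uniform on each fixed compact set for large $k$, rather than deteriorating with the degenerating cone angle; the fact that the constant $C$ in Theorem \ref{compare omegaphi and Omegabeta} is independent of $\beta$ is exactly what makes this possible. By contrast, identifying the limit as Ricci-flat and quasi-isometric to $\omega_{\operatorname{mix}}$, and passing from $C^\infty_{\operatorname{loc}}$ to pointed Gromov--Hausdorff convergence, are routine once the uniform estimates are in hand.
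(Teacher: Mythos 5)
Your proposal is correct and follows essentially the same route as the paper: pull back and rescale via $\Psi_{\beta_k}$, use the uniform equivalence from Theorem \ref{compare omegaphi and Omegabeta} together with Lemma \ref{convergence of the pull-back metric} to get two-sided bounds against $\hat{\omega}$, derive uniform $C^0$ and then $C^{2,\alpha}$ potential estimates via Evans--Krylov (the paper packages the right-hand side as the bounded pluriharmonic function $H_\beta$ relative to $dV_{\operatorname{eucl}}$, whereas you use the relative potential against $\hat{\omega}$ with $\ddbar\psi_k=\beta_{1,k}^2\bar{\omega}_{\phi_{\beta_k}}$ --- the same device), bootstrap, extract a $C^\infty_{\operatorname{loc}}$ limit, identify it as Ricci-flat and quasi-isometric to $\hat{\omega}$, and promote to pointed Gromov--Hausdorff convergence using that the $\Psi_{\beta_k}$ are isometries and the rescaled domains exhaust $(\mathbb{C}^*)^m\times\mathbb{C}^{n-m}$.
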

\begin{proof}
First note that $\omega_{\phi_\beta}$ admits a potential function on $U_\beta$ since $\omega_{\beta, \operatorname{mod}}$ admits one. Thus, $\bar{\omega}_{\phi_\beta}$ admits a potential function on $\Psi_\beta^{-1}(U_\beta)$, denoted by $\bar{\phi}_\beta$. The proof consists of three steps. We first deduce the $C^0$-estimate of $\bar{\phi}_\beta$ using Theorem \ref{compare omegaphi and Omegabeta}. Then we derive the $C^{2, \alpha}$-estimates for $\bar{\phi}_\beta$ by the standard regularization arguments for Monge--Amp\`{e}re equations. This combining with Arzel\`{a}--Ascoli Theorem gives us a cluster value of the metrics. Finally, we use that smooth convergence to conclude the pointed Gromov--Hausdorff convergence as wanted.

\textbf{Step 1: $\boldsymbol{C^0}$-estimates.}

Discussions in section \ref{sec: nbhd of d} indicate that there exists a uniform constant $C>0$ (independent of $\beta\in(0, \frac{1}{2}]^r$) such that 
\begin{align*}
    C^{-1}\omegamod\leq \omega_{\phi_\beta}\leq C\omegamod
\end{align*}
on $U_\beta$.
Thus 
\begin{align*}
    C^{-1} \Psi_\beta^*(\frac{1}{\beta_1^2}\omegamod)\leq \bar{\omega}_{\phi_\beta}\leq C\Psi_\beta^*(\frac{1}{\beta_1^2}\omegamod).
\end{align*}
By Lemma \ref{convergence of the pull-back metric}, $\Psi_\beta^*(\frac{1}{\beta_1^2}\omegamod)$ converges in $C^\infty (K)$ to $\hat{\omega}$ for any compact $K\subset (\mathbb{C}^*)^m\times \mathbb{C}^{n-m}$. Hence, there exists a constant $C_K>0$ (independent of $\beta$) such that 
\begin{align}\label{comparison}
    C_K^{-1} \hat{\omega}\leq \bar{\omega}_{\phi_\beta}\leq C_K \hat{\omega}.
\end{align}
By \eqref{comparison}, $\bar{\omega}_{\phi_\beta}$ is uniformly bounded in mass on $K$. Then by the weak compactness of positive currents, $\bar{\phi}_\beta$ has a uniform $L^1_{\operatorname{loc}}$ bound, hence uniform $L^p_{\operatorname{loc}}$ bounds, for any $p>1$. By \eqref{comparison}, $\Delta \bar{\phi}_\beta$ is uniformly bounded. Thus by standard elliptic regularity results, \cite[Theorem 8.17]{GT}, there exists a constant $C$ independ of $\beta$ such that 
\begin{align}\label{bound for phibetabar}
    ||\bar{\phi}_\beta||_{C^0(K)}\leq C, \quad \text{for}\; C=C(K).
\end{align}

\textbf{Step 2: Higher-order estimates and the smooth local convergence.}

Since $\omega_{\phi_\beta}$ satisfies the \KE\;equation outside $D_\beta$, $\bar{\omega}_{\phi_\beta}$ satisfies
\begin{align}\label{MA for omegaphibar}
    \operatorname{Ric} \bar{\omega}_{\phi_\beta}=-\beta_1^2 \bar{\omega}_{\phi_\beta}\quad \text{on} \; K.
\end{align}
Let $dV_{\operatorname{eucl}}$ denote the Euclidean volume form on $(\mathbb{C}^*)^m\times \mathbb{C}^{n-m}$. Define 
\begin{align*}
    H_\beta:=\log \frac{\bar{\omega}_{\phi_\beta}^n e^{-\beta_1^2\bar{\phi}_\beta}}{dV_{\operatorname{eucl}}}.
\end{align*}
$H_\beta$ is pluriharmonic by \eqref{MA for omegaphibar}.  By the definition of $H_\beta$, 
\begin{align}\label{MA for phibetabar}
    (\ddbar \bar{\phi}_\beta)^n=e^{\beta_1^2 \bar{\phi}_\beta+H_\beta}dV_{\operatorname{eucl}}.
\end{align}
By \eqref{comparison}, 
\begin{align}\label{ineq: b1H}
    ||\beta_1^2 \bar{\phi}_\beta+ H_\beta||_{C^0(K)}<+\infty.
\end{align}
Combining \eqref{bound for phibetabar} and \eqref{ineq: b1H}, we see $||H_\beta||_{C^0(K)}<+\infty$. Then by gradient estimates for pluriharmonic functions, 
\begin{align}\label{grad est for H}
    ||H_\beta||_{C^k(K)}<C(k, K),\quad \text{where}\; C(k, K) \;\text{only depends on $k$ and $K$ not on $\beta$.}
\end{align}
Define
\begin{align*}
    \Phi: \phi\mapsto \log \frac{(\ddbar \phi)^n e^{-\beta_1^2\phi}}{dV_{\operatorname{eucl}}}.
\end{align*}
$\Phi$ is a uniform elliptic concave operator as a function of $\partial \bar{\partial}\phi$. Hence by Evans--Krylov theory, $||\phi||_{C^{2, \alpha}(K)}$ is controlled by $||\phi||_{C^0(K^\prime)}, ||\Delta \phi||_{C^0(K^\prime)}$ and $||\Phi(\phi)||_{C^{0, 1}(K^\prime)}$ for some $K^\prime \Supset K$. 
Since $\Phi(\bar{\phi}_\beta)=H_\beta$, by \eqref{grad est for H}, \eqref{bound for phibetabar} and the fact $||\Delta {\bar{\phi}_\beta}||_{C^0(K')}<+\infty$, there exist some $\alpha\in (0, 1)$ and a uniform constant $C>0$ such that 
\begin{align*}
    ||\bar{\phi}_\beta||_{C^{2, \alpha}(K)}\leq C.
\end{align*}
By standard bootstrapping arguments, every derivative of $\bar{\phi}_\beta$ is uniformly bounded on $K$. Then Arzel\`{a}-Ascoli theorem indicates $(\bar{\phi}_{\beta_k})_{k\in\mathbb{N}}$ has a convergent subsequence in $C^\infty(K)$-topology.
Recall \eqref{MA for omegaphibar}, letting $\beta_1\to 0$ then due to the $C^\infty$-convergence above we get a cluster value $\bar{\omega}_\infty$ such that 
\begin{align*}
\operatorname{Ric} \bar{\omega}_\infty=0.
\end{align*}
By \eqref{comparison}, $\bar{\omega}_\infty$ is quasi-isometric to $\hat{\omega}$, and therefore is a mixed cylinder and edge metric. 

\textbf{Step 3: Pointed Gromov--Hausdorff convergence}

It remains to show a subsequence of $(U_{\beta_k}, \frac{1}{\beta_{1, k}^2}\omega_{\phi_{\beta_k}})$ converges in pointed Gromov-Hausdorff topology to $((\mathbb{C}^*)^m\times \mathbb{C}^{n-m}, \bar{\omega}_\infty)$. Fix $q\in (\mathbb{C}^*)^m\times \mathbb{C}^{n-m}$ and fix a radius $a>0$. First note that by construction, $\displaystyle B_{\bar{\omega}_{\phi_{\beta_k}}}(q, a)$ is isometric to $\displaystyle B_{\frac{1}{\beta_{1, k}^2}\omega_{\phi_{\beta_k}}}(\Psi_{\beta_k}(q), a)$. Secondly, by letting the index $k\in\mathbb{N}$ be large enough, we have $B_{\bar{\omega}_\infty}(q, 2a)\subset \Psi_{\beta_k}^{-1}(U_{\beta_k})$. Finally, due to the local $C^\infty$-convergence, $B_{\bar{\omega}_{\phi_{\beta_k}}}(q, a)\subset B_{\bar{\omega}_\infty}(q, 2a)$ and $B_{\bar{\omega}_{\phi_{\beta_k}}}(q, a)$ converges to $B_{\bar{\omega}_\infty}(q, a)$ in the Gromov-Hausdorff topology. Therefore $(U_{\beta_k}, \frac{1}{\beta_{1, k}^2}\omega_{\phi_{\beta_k}})$ converges (up to a subsequence) in pointed Gromov-Hausdorff topology to $((\mathbb{C}^*)^m\times \mathbb{C}^{n-m}, \bar{\omega}_\infty)$ by \cite[Definition 8.1.1]{BBI}.
\end{proof}

If we further allow more than one cone angles converge to $0$, then we have the following results by modifying the result of Lemma \ref{convergence of the pull-back metric}.

\begin{thm}
Let $\{\beta_{1, k}\}_{k\in\mathbb{N}}$ be a sequence of positive numbers converging to $0$. Assume further that for any $i=2, \dots, r$ such that $\{\beta_{i, k}\}_{k\in\mathbb{N}}$ also converges to $0$, there holds $\lim_{k\to\infty}\frac{\beta_{1, k}}{\beta_{i, k}}\in[0, 1]$. Assume $\beta_{i, k}\in (0, \frac{1}{2}]$ for $i=1, 2,\dots, r$ and all $k\in\mathbb{N}$. Let ${\omega}_{\phi_{\beta_k}}$ be the (negatively curved) \KE\;crossing edge metric on $(X, D_k=\sum_{i=1}^r (1-\beta_{i, k})D_i)$. Then there exists a subsequence of the metric spaces $(U_{\beta_k}, \frac{1}{\beta_{1, k}^2}\omega_{\phi_{\beta_k}})$ converging in pointed Gromov-Hausdorff topology to $((\mathbb{C}^*)^m\times \mathbb{C}^{n-m}, \bar{\omega}_\infty)$, where $\bar{\omega}_\infty$ is a mixed cylinder and edge metric with cylindrical part along components whose cone angles converge to $0$ and conical part along other components.
\end{thm}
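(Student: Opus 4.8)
The plan is to recycle the three-step argument proving Theorem \ref{thm: convergence of kee near divisor} essentially verbatim, replacing only Lemma \ref{convergence of the pull-back metric} by a version that permits several cone angles to degenerate simultaneously. First I would pass to a subsequence along which $\beta_{i,k}\to d_i>0$ for the non-degenerating indices and $\beta_{1,k}/\beta_{i,k}\to\ell_i\in[0,1]$ (with $\ell_1=1$) for every $i\in\{1,\dots,m\}$; this splits $\{1,\dots,m\}$ into a cylindrical set $I_{\mathrm{cyl}}=\{i:\beta_{i,k}\to0\}$ and a conical set $I_{\mathrm{cone}}$. The heart of the matter is to build, coordinate by coordinate on the model metric $\omegamod$ of \eqref{model metric with triv. metric}, a rescaling $\Psi_{\beta_k}$ for which $\Psi_{\beta_k}^*(\beta_{1,k}^{-2}\omegamod)$ converges in $C^\infty_{\mathrm{loc}}((\mathbb{C}^*)^m\times\mathbb{C}^{n-m})$ to a fixed metric $\hat\omega$ that is cylindrical along $I_{\mathrm{cyl}}$ and conical along $I_{\mathrm{cone}}$, i.e.\ a mixed cylinder and edge metric in the sense of Definition \ref{def of mix}.

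For $i\in I_{\mathrm{cone}}$, and for $i\in I_{\mathrm{cyl}}$ with $\ell_i\in[0,1)$, the tip-centered scaling $z_i=(\beta_{1,k}/\beta_{i,k})^{1/\beta_{i,k}}w_i$ already used in Lemma \ref{convergence of the pull-back metric} still works: on compacta one has $\tfrac{\beta_{1,k}^2}{\beta_{i,k}^2}|w_i|^{2\beta_{i,k}}\to\ell_i^2<1$, so the $i$-th summand of the pullback tends to $|dw_i|^2/|w_i|^{2(1-d_i)}$ (a cone, for $i\in I_{\mathrm{cone}}$, where $|w_i|^{2\beta_{i,k}}\to|w_i|^{2d_i}$) or to $(1-\ell_i^2)^{-2}|dw_i|^2/|w_i|^2$ (a cylinder, for $i\in I_{\mathrm{cyl}}$ with $\ell_i<1$, where $|w_i|^{2\beta_{i,k}}\to1$).

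I expect the main obstacle to be precisely the remaining regime $i\in I_{\mathrm{cyl}}$ with $\ell_i=1$, that is, a component whose angle degenerates at a rate comparable to $\beta_{1,k}$. There $\tfrac{\beta_{1,k}^2}{\beta_{i,k}^2}|w_i|^{2\beta_{i,k}}\to1$, the factor $(1-|z_i|^{2\beta_{i,k}})^{-2}$ in $\omegamod$ collapses, and the tip-centered pullback blows up pointwise. The remedy is to move the basepoint into the interior of the cylindrical end rather than toward the cone tip: for such $i$ I would use the fixed-center scaling $z_i=e^{-1/\beta_{i,k}}w_i$, the same normalization already used on the leading coordinate $z_1$. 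A direct computation then shows the $i$-th summand of $\Psi_{\beta_k}^*(\beta_{1,k}^{-2}\omegamod)$ equals $\tfrac{\beta_{i,k}^2}{\beta_{1,k}^2}\cdot\frac{e^{-2}|w_i|^{2\beta_{i,k}}}{(1-e^{-2}|w_i|^{2\beta_{i,k}})^2}\cdot\frac{|dw_i|^2}{|w_i|^2}$, which converges on compacta to the genuine cylindrical metric $\ell_i^{-2}\frac{e^{-2}}{(1-e^{-2})^2}\frac{|dw_i|^2}{|w_i|^2}$ because $\beta_{i,k}^2/\beta_{1,k}^2\to\ell_i^{-2}=1$. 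The reason the two regimes must be separated is that this fixed-center scaling itself blows up when $\ell_i=0$, while the tip-centered one blows up when $\ell_i=1$; each choice covers exactly the range the other misses. After adjusting the defining radii of $U_{\beta_k}$ to match the chosen scalings and checking that every compact $K$ lies in $\Psi_{\beta_k}^{-1}(U_{\beta_k})$ for $k$ large, this yields the required analogue of Lemma \ref{convergence of the pull-back metric}.

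With $\hat\omega$ in hand the remainder is identical to the proof of Theorem \ref{thm: convergence of kee near divisor}. Theorem \ref{compare omegaphi and Omegabeta} together with the equivalence $\Omega_{\beta}\simeq\omegamod$ underlying \eqref{eq:equivalence of kee and model metric} gives a uniform two-sided bound $C_K^{-1}\hat\omega\le\bar{\omega}_{\phi_{\beta_k}}\le C_K\hat\omega$ on each compact $K$, hence a uniform $C^0$ bound on the local potentials $\bar{\phi}_{\beta_k}$. The rescaled equation $\operatorname{Ric}\bar{\omega}_{\phi_{\beta_k}}=-\beta_{1,k}^2\bar{\omega}_{\phi_{\beta_k}}$ makes the associated density pluriharmonic, so gradient estimates, Evans--Krylov theory, and bootstrapping promote this to uniform $C^\infty_{\mathrm{loc}}$ bounds; Arzel\`a--Ascoli then extracts a subsequential limit $\bar{\omega}_\infty$ with $\operatorname{Ric}\bar{\omega}_\infty=0$ that is quasi-isometric to $\hat\omega$, i.e.\ a mixed cylinder and edge metric with cylindrical factors along $I_{\mathrm{cyl}}$ and conical factors along $I_{\mathrm{cone}}$. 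Finally the local smooth convergence upgrades to pointed Gromov--Hausdorff convergence of $(U_{\beta_k},\beta_{1,k}^{-2}\omega_{\phi_{\beta_k}})$ to $((\mathbb{C}^*)^m\times\mathbb{C}^{n-m},\bar{\omega}_\infty)$ exactly as in Step 3 there.
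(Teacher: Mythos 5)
Your proposal is correct and is essentially the paper's own proof: the paper likewise replaces $\Psi_\beta$ by a modified rescaling $\Phi_\beta$ on an adjusted polydisc $V_\beta$, using the fixed-center scaling $z_i=e^{-1/\beta_{i,k}}w_i$ on components whose angles degenerate at a rate comparable to $\beta_{1,k}$ and the tip-centered scaling $z_j=(\beta_{1,k}/\beta_{j,k})^{1/\beta_{j,k}}w_j$ on all other components, proves $C^\infty_{\operatorname{loc}}$-convergence of the pulled-back model metric to a mixed cylinder and edge metric, and then reruns the three steps of Theorem \ref{thm: convergence of kee near divisor}. The only (immaterial) difference is where the cut between the two scalings is made: the paper applies the fixed-center scaling whenever $\lim_k\beta_{1,k}/\beta_{i,k}=c_i\in(0,1]$, whereas you reserve it for ratio $1$ and treat ratios in $[0,1)$ by the tip-centered scaling; in the overlapping regime $c_i\in(0,1)$ both choices work and produce quasi-isometric cylindrical factors (and, incidentally, your pullback formula $\frac{\beta_{i,k}^2}{\beta_{1,k}^2}\cdot\frac{e^{-2}|w_i|^{2\beta_{i,k}}}{(1-e^{-2}|w_i|^{2\beta_{i,k}})^2}\cdot\frac{\sqrt{-1}\,dw_i\wedge d\bar w_i}{|w_i|^2}$ has the ratio the right way up, correcting a typo in the paper's displayed formula for $\Phi_\beta^*(\beta_1^{-2}\omega_{\beta,\operatorname{mod}})$).
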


\begin{proof}
Without loss of generality, assume
\begin{align}\label{assumption for angles 1}
    \lim_{k\to \infty}\beta_{i, k}&=0,\quad \text{for}\;i=1,\dots, t, t<m,\\
    \lim_{k\to \infty}\beta_{j, k}&:=\beta_{j, \infty}>0,\quad \text{for}\;j=t+1,\dots, m.
\end{align}
Moreover, assume
\begin{align}\label{assumption for angles 2}
    \lim_{k\to \infty}\frac{\beta_{1, k}}{\beta_{\ell, k}}&=c_\ell\in(0, 1],\quad \text{for}\;\ell=1, \dots, s, s<t,\\
    \lim_{k\to \infty}\frac{\beta_{1, k}}{\beta_{\ell, k}}&=0,\quad \text{for}\;\ell=s+1, \dots, t.
\end{align}
Recall in Lemma \ref{convergence of the pull-back metric}, we denote 
\begin{align*}
    \mathbb{D}(a_1,\dots, a_m, b)=\{z\in(\mathbb{C}^*)^m\times \mathbb{C}^{n-m}:|z_i|<a_i, i=1,\dots, m, |z_j|<b, j=m+1,\dots, n\},
\end{align*}
and
\begin{align*}
    \Psi_\beta: \mathbb{D}\left(e^{\frac{1}{2\beta_1}}, \left(\frac{\beta_2}{\beta_1}\right)^{\frac{1}{2\beta_2}}, \dots, \left(\frac{\beta_m}{\beta_1}\right)^{\frac{1}{2\beta_m}}, \frac{1}{\beta_1}\right)\to U_\beta:=\mathbb{D}\left(e^{-\frac{1}{2\beta_1}}, \left(\frac{\beta_1}{\beta_2}\right)^{\frac{1}{2\beta_2}}, \dots, \left(\frac{\beta_1}{\beta_m}\right)^{\frac{1}{2\beta_m}}, 1\right)\\
    (w_1, \dots, w_m, w_{m+1}, \dots, w_n)\mapsto \left(e^{-\frac{1}{\beta_1}}w_1, \left(\frac{\beta_1}{\beta_2}\right)^{\frac{1}{\beta_2}}w_2,\dots, \left(\frac{\beta_1}{\beta_m}\right)^{\frac{1}{\beta_m}}w_m, \beta_1 w_{m+1}, \dots, \beta_1 w_n\right).
\end{align*}
Now let us modify $\Psi_\beta$ by defining
\begin{align*}
    V_\beta:=\mathbb{D}\left(e^{-\frac{1}{2\beta_1}}, e^{-\frac{1}{2\beta_2}},\dots, e^{-\frac{1}{2\beta_s}}, \left(\frac{\beta_1}{\beta_{s+1}}\right)^{\frac{1}{2\beta_{s+1}}}, \dots, \left(\frac{\beta_1}{\beta_m}\right)^{\frac{1}{2\beta_m}}, 1 \right),
\end{align*}
and
\begin{align*}
    &\Phi_\beta: \mathbb{D}\left(e^{\frac{1}{2\beta_1}}, e^{\frac{1}{2\beta_2}},\dots, e^{\frac{1}{2\beta_s}}, \left(\frac{\beta_{s+1}}{\beta_1}\right)^{\frac{1}{2\beta_{s+1}}}, \dots, \left(\frac{\beta_m}{\beta_1}\right)^{\frac{1}{2\beta_m}}, \frac{1}{\beta_1}\right)\to  V_\beta,\\
    &\Phi_\beta(w_1, \dots, w_s, w_{s+1},\dots, w_m, w_{m+1}, \dots, w_n)=\\ &\left(e^{-\frac{1}{\beta_1}}w_1, e^{-\frac{1}{\beta_2}}w_2,\dots, e^{-\frac{1}{\beta_s}}w_s,  \left(\frac{\beta_1}{\beta_{s+1}}\right)^{\frac{1}{\beta_{s+1}}}w_{s+1},\dots, \left(\frac{\beta_1}{\beta_m}\right)^{\frac{1}{\beta_m}}w_m, \beta_1 w_{m+1}, \dots, \beta_1 w_n\right).
\end{align*}
Then
\begin{align*}
    \Phi_\beta^*\left(\frac{1}{\beta_1^2}\omega_{\beta, \operatorname{mod}}\right)&=\sum_{i=1}^s \frac{\beta_1^2}{\beta_i^2}\frac{e^{-2}|w_i|^{2\beta_i}}{(1-e^{-2}|w_i|^{2\beta_i})^2}\cdot\frac{ \sqrt{-1}dw_i\wedge d\bar{w}_i}{|w_i|^2}\\
    &+\sum_{j=s+1}^m \frac{\sqrt{-1}dw_j\wedge d\bar{w}_j}{|w_j|^{2(1-\beta_j)}\left(1-\frac{\beta_1^2}{\beta_j^2}|w_j|^{2\beta_j}\right)^2}+\sum_{\ell=m+1}^n \sqrt{-1}dw_\ell\wedge d\bar{w}_\ell.
\end{align*}
Denote 
\begin{align*}
    \beta_{k}:=(\beta_{1, k},\dots, \beta_{r, k}), \quad \text{for}\; k\in\mathbb{N}^*.
\end{align*}
For a compact $K\subset (\mathbb{C}^*)^m\times \mathbb{C}^{n-m}$, there exists a large enough $k$ such that $K\subset \Phi_{\beta_k}^{-1}(V_{\beta_k})$. By assumptions \eqref{assumption for angles 1} and \eqref{assumption for angles 2}, $\displaystyle \Phi_{\beta_k}^*\left(\frac{1}{\beta_{1, k}^2}\omega_{\beta_k, \operatorname{mod}}\right)$ converges in $C^{\infty}(K)$ to 
\begin{align*}
    \sum_{i=1}^s & \frac{e^{-2}}{c_i^2 (1-e^{-2})}\frac{\sqrt{-1}dw_i\wedge d\bar{w}_i}{|w_i|^2}+\sum_{j=s+1}^t \frac{\sqrt{-1}dw_j\wedge d\bar{w}_j}{|w_j|^2}\\
    &+\sum_{\ell=t+1}^m \frac{\sqrt{-1}dw_\ell\wedge d\bar{w}_\ell}{|w_\ell|^{2(1-\beta_{\ell, \infty})}}+\sum_{q=m+1}^n\sqrt{-1}dw_q\wedge d\bar{w}_q=:\check{\omega},
\end{align*}
which is a mixed cylindrical and conical metric by Definition \ref{def of mix}. The cylindrical parts are along $D_i, i=1,\dots, t$, whose cone angle goes to $0$. The remainder of the proof is similar to that of Theorem \ref{thm: convergence of kee near divisor}.

\end{proof}

\bibliographystyle{siam}
\bibliography{newbib}

\end{document}